\numberwithin{equation}{section}
\newtheorem{theorem}{Theorem}%[section]
\newtheorem{meta-thm}[theorem]{Meta-Theorem}
\newtheorem{lemma}[theorem]{Lemma}
\newtheorem{cor}[theorem]{Corollary}
\newtheorem{proposition}[theorem]{Proposition}
\newtheorem{remark}[theorem]{Remark}
\newtheorem{definition}[theorem]{Definition}
\newcommand{\noaverage}[1]{({#1})^0}
\DeclareMathAlphabet{\mathcalligra}{T1}{calligra}{m}{n}
\newcommand\beq[1]{ \begin{equation}\label{#1} }
\newcommand{\eeq}{ \end{equation} }
\newcommand\beqa[1]{ \begin{eqnarray} \label{#1}}
\newcommand{\eeqa}{ \end{eqnarray} }
\newcommand{\beqano}{ \begin{eqnarray*} }
\newcommand{\eeqano}{ \end{eqnarray*} }
\newcommand\equ[1]{{\rm (\ref{#1})}}
\def\dist{\operatorname{dist}}
\def\Id{\operatorname{Id}}
\def\Im{\operatorname{Im}}
\def\Lip{\operatorname{Lip}}
\def\Range{\operatorname{Range}}
\def\A{{\mathcal A}}
\def\B{{\mathcal B}}
\def\D{{\mathcal D}}
\def\G{{\mathcal G}}
\def\E{{\mathcal E}}
\def\tE{{\widetilde  E}}
\def\I{{\mathcal I}}
\def\LL{ {\mathcal{L}}}
\def\M{{\mathcal M}}
\def\N{{\mathcal N}}
\def\R{{\mathcal R}}
\def\S{{\mathcal S}}
\def\T{{\mathcal T}}
\def\U{{\mathcal U}}
\def\complex{{\mathbb C}}
\def\integer{{\mathbb Z}}
\def\nat{{\mathbb N}}
\def\real{{\mathbb R}}
\def\torus{{\mathbb T}}
\def\eps{\varepsilon}
\def\th{\theta}
\def\tJ{ {\widetilde J}}
\def\st{s}
\def\un{u}
\def\tgamma{ {\tilde \gamma}}
\def\ttgamma{{ {\tilde {\tilde \gamma}}}}
\def\txi{{ {\tilde \xi}}}
\def\tGamma{ {\widetilde \Gamma}}
\begin{document}
\title[Whiskered KAM tori of conformally symplectic systems]
{Existence of whiskered KAM tori of conformally symplectic systems}

\author[R. Calleja]{Renato C.  Calleja}
\address{Department of Mathematics and Mechanics, IIMAS, National
  Autonomous University of Mexico (UNAM), Apdo. Postal 20-726,
  C.P. 01000, Mexico D.F., Mexico}
\email{calleja@mym.iimas.unam.mx}

\author[A. Celletti]{Alessandra Celletti}
\address{
Department of Mathematics, University of Rome Tor Vergata, Via
della Ricerca Scientifica 1, 00133 Rome (Italy)}
\email{celletti@mat.uniroma2.it}

\author[R. de la Llave]{Rafael de la Llave}
\address{
School of Mathematics,
Georgia Institute of Technology,
686 Cherry St.. Atlanta GA. 30332-1160 }
\email{rll6@math.gatech.edu}

\thanks{R.C. was partially supported by UNAM-PAPIIT IA 102818. A.C. was partially supported by GNFM-INdAM and acknowledges MIUR Excellence Department
Project awarded to the Department of Mathematics of the University of
Rome ``Tor Vergata" (CUP E83C18000100006).
R.L. was partially supported by NSF grant DMS-1800241.}
\thanks{ This material is based upon work supported by the National Science Foundation under Grant No. DMS-1440140 while the  authors
were in residence at the Mathematical Sciences Research Institute in Berkeley, California, during the  Fall 2018 semester}

%%%%%%%%%%%%%%%%%%%%%%%%%
\baselineskip=18pt              %% DRAFT MODE -- double spaced.

%%%%%%%%%%%%%%%%%%%%%%%%%
%\address{}
%\email{}

\begin{abstract}

Many physical problems are described by conformally symplectic
systems (i.e., systems whose evolution in time
transforms a  symplectic form into a multiple of itself).
We  study the existence of whiskered tori in
a family $f_\mu$ of conformally symplectic maps depending on parameters $\mu$
(often called \emph{drifts}). We recall that whiskered tori
are tori on which the motion is a rotation, but they
have as many expanding/contracting directions as allowed by the
preservation of the geometric structure.

\begin{comment}
 In this paper, we fix the frequency and obtain the drift.
Of course, the results can be applied to many frequencies (satisfying
number theory properties). In some physical applications, it
may be desirable to obtain the function as a function of
the drift. This involves just a finite dimensional inverse
function theorem (Some minor technical complications appear
since the  frequencies range in a closed set.)
\end{comment}

Our main result is formulated in an  \emph{a-posteriori} format.
We fix $\omega$
satisfying Diophantine conditions. We assume
that  we are  given 1) a value of the
parameter $\mu_0$, 2) an embedding of the torus $K_0$ into the
phase space, approximately invariant under $f_{\mu_0}$ in the sense
that $f_{\mu_0} \circ K_0 - K_0 \circ T_\omega$ (where $T_\omega$
is the shift by $\omega$) is small (in some norm), 3) a splitting
of the tangent space at the range of $K_0$, into three bundles
which are approximately invariant under $D f_{\mu_0}$ and such that
the derivative satisfies \emph{``rate conditions''} on each of the
components.

Then, if some non-degeneracy conditions (verifiable by a finite
calculation  on the approximate  solution and which  do
not require any global property of the map) are satisfied, we
show that there is another
parameter $\mu_\infty$, an embedding $K_\infty$
and splittings close to the original ones which are invariant
under $f_{\mu_\infty}$. We also bound $|\mu_\infty - \mu_0|$,
$\|K_\infty - K_0 \|$ and the distance of
the initial and final splittings in terms of
the initial error.

We allow
that the stable/unstable bundles are nontrivial (i.e.,
not homeomorphic to a product bundle). On the other hand,
we show that the geometric set up has the
global consequence
that the center bundle is necessarily trivial (i.e.,
homeomorphic to a product bundle).

The proof of the main theorem
consists in describing an iterative process that takes advantage
of cancellations coming from the geometry. Then, we show
that the process converges to a true solution when started from
an approximate enough solution. The iterative process
leads to an efficient algorithm that is quite practical to implement.

The a-posteriori format of the theorem implies the usual
formulation of persistence under perturbations (the solutions
for the original systems are approximate solutions for
the perturbation), but it
also allows to justify approximate
solutions produced by any method (for example numerical solutions or
asymptotic formal expansions).
As an application, we study the singular problem of effects of
small dissipation on whiskered tori. We develop formal (presumably
not convergent) expansions in the perturbative parameter (which
generates dissipation) and use
them as input for the a-posteriori theorem. This allows
to obtain lower bounds for the domain of analyticity of
the tori as function of the perturbative parameter.

Even if we state only the theory for maps, our results apply also to flows.

\end{abstract}

\subjclass[2010]{70K43, 70K20, 34D35}
\keywords{Whiskered tori, Conformally symplectic systems, KAM theory. }

\maketitle

%\tableofcontents

\section{Introduction}\label{sec:intro}
Several physical problems are modeled by Hamiltonian systems affected by a
dissipation which enjoys a remarkably geometric property,
namely that the symplectic structure (preserved  by the Hamiltonian
evolution) is transformed into a multiple of itself.  Such systems
are called \emph{conformally symplectic systems}.

Examples of physical problems that are described by
conformally symplectic systems are:
\begin{itemize}
\item
Hamiltonian systems with a dissipative effect proportional to the velocity;
a concrete example is given by the spin--orbit problem in Celestial Mechanics
with a tidal torque -- see \cite{Celletti2010, CorreiaL2004, CorreiaL2009};
\item
Euler-Lagrange equations of exponentially discounted
systems; these models are often  found in finance, when inflation is
present and one needs to minimize the cost in present money
 --  see \cite{Bensoussan88, LomeliR, IturriagaS11, DFIZ2014,DaviniFIZ16}.
The exponential discount is also common in control theory
\emph{finite horizon models} - see \cite{MeadowsHER95};
\item
Gaussian thermostats, which are used in computations of non-equilibrium molecular dynamics -
see \cite{DettmannM96, WojtkowskiL98};
\item Nos\'e-Hoover dynamics in  Statistical Mechanics - see
\cite{DettmannM96b,Hoover}\footnote{There are several formulations
of the Nos\'e-Hoover dynamics in the literature. Some of them are
not conformally symplectic.}
\end{itemize}

Besides the interest in applications, (locally)  conformally symplectic systems
were also studied as natural problems by differential geometers (see \cite{Banyaga02, Agrachev,Vaisman85}).  For a detailed comparison between the
conformally symplectic systems and the slightly more general
locally conformally symplectic systems see \cite{CallejaCL18a}.

\begin{remark}
\label{extraproperties}
Conformally symplectic systems are
a very special kind of dissipative systems (for example, mechanical systems
with a friction proportional to a power different from $1$
 of the velocity are
not conformally symplectic). Therefore, conformally
symplectic systems enjoy remarkable properties which are not present in
more general dissipative
systems.

For example, in \cite{CallejaCL13a} it is shown that for
conformally symplectic systems in a neighborhood of
a  Lagrangian invariant torus,
the motion is smoothly conjugate to
a rotation along the torus and
a constant contraction in the normal direction. This is
clearly false in general dissipative systems.
In \cite{CCFL14} it is shown how this rigid
behaviour of conformally symplectic systems
leads to quantitative properties of phase locking regions (\cite{CCFL14}).  The
existence of a time-dependent variational principle also leads to
very dramatic qualitative properties not shared by arbitrary
dissipative systems (\cite{MaroS16}). The conformal symplectic
properties of a system also affect the dimension of the
parameters one needs to adjust to get a quasiperiodic
 solution (\cite{CallejaCL13a}).

For us, the most remarkable property of conformally symplectic
systems is the so-called \emph{``automatic reducibility"} that
shows that the ``center'' bundle around a rotational torus allows a
canonically defined system of coordinates in which the derivative
is particularly simple. We will also show in
Section~\ref{sec:geometry} that the conformal symplectic
geometry  implies that the ``center''
bundles around a rotational torus are trivial.
\end{remark}

The goal of this paper is to study the existence of whiskered tori
in conformally symplectic systems.
Whiskered tori were introduced in \cite{Arnold64,Arnold63a}
in Hamiltonian (a.k.a. symplectic) systems
and conjectured to be the key geometric structures leading to
instability for nearly integrable systems.

 We will present more precise
definitions of whiskered tori later (see
Definition~\ref{def:whiskered}), but we indicate that these are
tori in which the motion is conjugate to a rotation and which have
many hyperbolic directions (exponentially contracting in the
future or in the past under the linearized evolution). Definition~\ref{def:whiskered} includes
that the exponential rates characterizing the
center bundle straddle the conformally
symplectic constant and that the
dimension of the hyperbolic directions is as large as possible
given that the map is conformally symplectic.

The existence of quasi-periodic motions in dissipative
systems is very different from the Hamiltonian case.
The dissipation forces  many orbits to have the
same asymptotic behavior and, hence, the set of
asymptotic behaviours is smaller in the dissipative case than in
the Hamiltonian one.
Therefore, adding a small dissipation to a Hamiltonian
system is a very singular perturbation. In the Hamiltonian
case, under mild non-degeneracy conditions,
 to find tori of a certain frequency, it suffices
to choose the initial conditions. In the dissipative case,
since many solutions have the same asymptotic behavior one cannot
choose the initial conditions to obtain the desired long
term behavior in a fixed map. One needs to consider
families and adjust parameters to obtain a torus with a fixed frequency.
The results here apply also to symplectic systems
(the case considered in \cite{Arnold64,Arnold63a}), even if
the parameter count is different. Indeed, we find a
unified formalism in which one can continue from the symplectic
to the weakly symplectic.

Our first main result is Theorem~\ref{whiskered}, which is
expressed in the format of a-posteriori theorems of
numerical analysis. Given a family $f_\mu$
of conformally symplectic mappings,
we formulate an equation, called \emph{invariance equation}
for the parameterization of
a torus, say $K$, for the parameter $\mu$
in the family and for the splittings of the space. The invariance
equation expresses
that the parameterization and the splittings are invariant by
the map given by the parameter.

Theorem~\ref{whiskered} allows us to validate approximate solutions of the invariance equation
obtained by non-rigorous methods (one just needs to verify the
accuracy of the solution and the condition numbers).
Notably, one can justify numerical methods or asymptotic expansions.
Note that Theorem~\ref{whiskered} does not assume
that the system is close to integrable and it is, therefore, very
suitable to study phenomena that happen near the breakdown of
the tori. A result on the local uniqueness of the solution is
given in Theorem~\ref{thm:uniqueness}.
The a-posteriori theorems can also be used to justify computations in
concrete systems of interest. For example in Celestial
Mechanics and Astrodynamics, one is interested in finding
objects in N-body problems with very concrete values of
the parameters.

It is also important to remark that the condition numbers we introduce
 just involve averages
of algebraic expressions formed by the approximate solution
and its derivatives,
hence they are straightforwardly computable just knowing the
approximate solution.  They do not involve any global
assumption on the map such as the global twist.

Our second main result is Theorem~\ref{thm:domain}.
We consider dissipative perturbations of a Hamiltonian
system and
study the  domain of analyticity
in terms of the perturbation parameter of
the resulting whiskered tori (and the drift and the bundles).

The limit of small dissipation occurs naturally in
many problems in Celestial Mechanics and many
of the models for dissipation in common use are conformally symplectic
(\cite{Celletti2010}, \cite{CorreiaL2004}, \cite{CorreiaL2009}).
Examples of dissipations going to zero might appear
when a celestial body subject to tidal torque reaches a synchronous
rotation state,  or when a satellite is launched from Earth and reaches an
altitude where the atmosphere is absent, thus moving to a region where the
atmospheric drag dissipation is zero (\cite{CG2018}, \cite{Chao}).
The limit of small dissipation is also of interest in finance,
where it corresponds to small inflation (\cite{Bensoussan88}).
In control theory it  is common to consider
the limit when the finite  horizon is taken to infinity (\cite{MeadowsHER95}).
In the discounted action model,
in the limit of small dissipation the minimizing sets of
the action  and the
solutions of the Hamilton-Jacobi equation have been
considered in \cite{DFIZ2014,DaviniFIZ16}.

If we denote the perturbative parameter by $\eps$ (which we
consider complex),
we can study the analyticity domains for $\mu_\eps$, $K_\eps$ (see Theorem~\ref{thm:domain}).
In contrast with the Hamiltonian case,
in the dissipative case, unless one adjusts parameters, one does not expect that there
are quasi-periodic solutions. Hence, we do not expect that
perturbative expansions converge and that there
is no open neighborhood of $\eps = 0$ contained in the
analyticity domain of $\mu_\eps, K_\eps$.  We conjecture that the domain
we obtain in Theorem~\ref{thm:domain} (which indeed does not contain
any ball centered in the origin) is essentially optimal, see Section~\ref{sec:dom}.

\subsubsection{Application to flows}
Finally, we note that all the results we will present for maps apply also
to flows in continuous time by taking sections and considering
the return map (see the construction in
\cite{CallejaCL13a}). Of course, producing
a direct proof for flows by adapting the steps
in the proof for maps presented here is straightforward.

\subsection{Comparison with other results in the literature}
\label{literature}

The literature on the existence of whiskered tori in symplectic
systems  is very extensive, see
\cite{Graff74, Zehnder76,JorbaV97,LiYi05a, LiYi05b}.

One should also mention that there are persistence theorems
for whiskered tori  that apply
to  general systems (not necessarily symplectic). Indeed, the pioneering work
\cite{Moser67} already considered general perturbations (see also
\cite{BroerHTB90,BroerHS96,CanadellHaro}). Of course, the conformally symplectic
systems studied in this paper are a particular case of general
perturbations and the results of
 \cite{Moser67,BroerHTB90,BroerHS96,CanadellHaro}
 apply a-fortiori. The
extra assumption of conformally symplectic allows us to obtain some
results that are false for more general perturbations.  Notably, the
results here for  weakly dissipative systems could be
false if the dissipation is not conformally symplectic (see
Remark~\ref{extraproperties}).  We also note that,
for conformally symplectic systems
there are  constraints on the possible
exponents on the normal direction. We discuss
these constraints in  Section~\ref{sec:geometry}.
We also note that the dynamical and geometric
assumptions have the surprising global consequence that the
center bundle is trivial in the sense of bundle theory
(diffeomorphic to a product bundle).

{From} the point of
view of techniques,
we note that the papers \cite{Moser67,BroerHTB90,BroerHS96} are based on
transformation theory
(i.e., making changes of variables till the system is in a form which
manifestly does have an invariant torus), they also assume
that the stable and unstable bundles are trivial in the sense
of bundle theory (the bundle is a product bundle). This triviality of the bundle allows us
to use a global system of coordinates in the center manifold in which
the linearized equation can be reduced to constant coefficients.

This paper is based on making additive corrections to an embedding
and by solving the linearized equations using geometrically natural operations.

{From} the point of view of
computation, transformation theory is hard to implement as algorithms,
since it involves operating on functions
with many variables (as many variables as the
dimension of the phase space). The algorithms in this paper
only require to deal with functions with as many variables
as the dimension of the objects considered.

On the other hand, the transformation theory
gives not only
the existence of the whiskered tori,
but also substantial information on the behavior of
orbits in a neighborhood of the torus \cite{Locatelli}. A comparison of
the transformation theory and methods similar to ours
for Lagrangian tori in conformally symplectic systems
appears in \cite{LocatelliS15}.

An approach for the study of whiskered tori
close to ours was described in
\cite{Fontich2009}. In particular, it produces
an a-posteriori method, which was implemented in
\cite{FontichLS} for finite dimensional
Hamiltonian systems. Algorithms based on
the method of \cite{FontichLS} and a more
efficient reformulation appear in
\cite{HuguetLS11}. Generalizations
to  Hamiltonian lattice systems appear in  \cite{FontichLSII} and in
\cite{LlaveS} for PDE's, while presymplectic systems are investigated in \cite{LlaveX18}.  The paper
\cite{CanadellHaro} uses similar techniques to give
a result on the persistence of whiskered tori for general systems, while
\cite{CanadellHaro2} presents implementations of
the algorithms and studies of the phenomena that happen at breakdown.

\subsection{The a-posteriori format}
The method in this paper
is not
based on transformation theory and on the reduction to normal forms, but it is rather
based on formulating an equation for the parameterization which expresses the
invariance and which is solved by a Newton-like method.

The solution of the Newton equations uses the contraction on
the stable and unstable bundles
as well as  geometric identities obtained from the
fact that the  map is conformally symplectic. These identities have
the geometric meaning that there is a special system of
coordinates in which the Newton equations are
particularly simple. They  were used in \cite{CallejaCL13a} for Lagrangian
tori of symplectic systems and in \cite{CallejaC10} for conformally
symplectic systems. In the
symplectic case they were introduced in \cite{Llave01c,LlaveGJV05} for
Lagrangian tori. In the case of whiskered tori for
symplectic systems, they were used
in \cite{Fontich2009,FontichLS} (a more efficient variation was introduced in
\cite{HuguetLS11,FontichLSII}).

The method in this paper allows that the hyperbolic bundles are non-trivial (a
situation that happens near resonances, \cite{HaroL07}).
Some other examples were presented in \cite{Fontich2009}.
Most methods based on normal forms assume that the bundles are
trivial.

In contrast,  in this paper, we will  show
that for conformally symplectic systems (including symplectic
systems)
the center bundles  of whiskered tori are trivial
(see Lemma~\ref{lem:istrivial}): this is a rather surprising interaction
between global geometry and dynamics.

The main result of this paper is stated in an
a-posteriori format: assuming the
existence of sufficiently approximate solutions with respect to
\emph{condition numbers}, then we conclude that there are true solutions.

Notice that the a-posteriori format implies the usual results of
persistence under a small change of the system. If there is a system
which has an invariant whiskered torus,
the torus
and its bundles will be approximately invariant for all the approximate
systems. Then, applying
the a-posteriori theorem we will conclude
that the perturbed system also has an invariant torus.
As an immediate consequence of
the a-posteriori format,  we automatically obtain Lipschitz dependence
on the map or on the frequency (just observe
that the solution corresponding to a frequency $\omega$
satisfies the equation for a frequency $\omega'$ up
to an error bounded by $C|\omega - \omega'|$).
Validating the Lindstedt series, we obtain sharper differentiability
properties.
As mentioned in \cite{CallejaCL13a}, the a-posteriori format has several
consequences: smooth dependence on parameters, Whitney
regularity in the frequency, bootstrap of regularity, etc.

The a-posteriori format is very well suited for numerical analysis,
since it allows one to validate the approximate
solutions produced by a numerical calculation. We also
note that the method of proof leads to a very
efficient algorithm. The iterative method  is quadratically convergent,
it only requires to discretize functions with a number of
variables equal to the dimension of
the torus searched, rather than  the dimension of the phase
space,
 the storage space requirements are small
(order $O(N)$, where $N$ is the number of discretization modes)
 and the
operation count per step is small ($O(N \ln(N))$).
A continuation method based on the  algorithm presented here
 is guaranteed to
converge till the boundary of existence of the torus
or till some of the non-degeneracy conditions fail.
Hence, a careful continuation algorithm gives a practical numerical algorithm to
detect the breakdown of analyticity (these
algorithms were implemented
for Lagrangian rotational tori and models
in statistical mechanics in \cite{CallejaL10}, \cite{CallejaC10}).
For conformally symplectic systems, the paper
\cite{LocatelliS15} includes
a comparison of the method presented here with the transformation method
from the point of view of applications.

\subsection{Main results of the paper}
The proof of our main result on the existence of the whiskered tori, Theorem~\ref{whiskered},
consists in describing
an iterative Newton method  to solve the invariance equation
when started on an approximate solution. The Newton step
takes advantage of  the geometry of the system
and of  the existence of hyperbolic directions.

A step of the Newton method involves solving a linearized
invariance equation. To solve this equation,
the linearized invariance equation is projected on the hyperbolic and
center subspaces. The equations projected on
the hyperbolic equations can be solved taking advantage of
the contractions (in the future or in the past).
As for the equations projected
to the center subspace, we use the so--called \emph{automatic reducibility}
which, near an approximate invariant torus, constructs a
system of coordinates in which the linearized equation along
the center directions takes a particularly simple form, which allows
the use of Fourier methods.
\smallskip

Our second result,
Theorem~\ref{thm:domain}, is concerned with the
limit of small dissipation.
The low dissipation limit is very natural in Celestial Mechanics and Astrodynamics since
many celestial bodies experience weak frictions, due to tidal forces or
the atmospheric drag (\cite{CCbasin}, \cite{Chao}).
In this case, the friction is indeed proportional
to the velocity, which makes the system conformally symplectic.

We consider systems that depend on a small
parameter\footnote{Since we are considering analyticity properties, it is natural
to take $\eps$ complex and all the objects considered
to be complex as well. This does not make
any difference in the proof, since the Newton step
involves just algebraic manipulations, derivatives
and solutions of cohomology equations, which are
the same for complex maps. Of course, if $f_{\mu,\eps}$ is such
that for real values of the arguments it gives real values, the
parameterizations $K_\eps$ and $\mu_\eps$ will also
be real for real values of the arguments.} $\eps$ and
the conformal factor is $\lambda(\eps)=1+\alpha
\eps^a+O(|\eps|^{a+1})$ for $a\in\integer_+$,
$\alpha\in\complex\setminus\{0\}$.  When $\eps=0$ we recover the
symplectic case and small $\eps$
means small dissipation.
We study the analyticity domains in $\eps$ near
$\eps=0$ of the whiskered tori (as well as the domains of the
drift and of the stable/unstable bundles). We note that the domains
we obtain do not contain any ball centered at $\eps = 0$, and we
conjecture that this is optimal, hence we conjecture that indeed
the formal power series are divergent.  For full dimensional
tori, this has been studied numerically by \cite{Bustamante}.

Inspired by \cite{CCLdomain}, we
start by constructing Lindstedt series for the
problem.
Even if the Lindstedt series, in this case,
probably do not converge, a finite order truncation
provides an
approximate solution, which can be used as the approximate solution
taken as input in  the a-posteriori
theorem, Theorem~\ref{whiskered}, for some (complex) values of
$\eps$ for which  we can verify  the quantitative conditions
of Theorem~\ref{whiskered}.

In this way, we obtain that the parameterization and the
drift are analytic when $\eps$ ranges in a domain which
we describe very explicitly.  The domain of $\eps$ where
we show that $K_\eps, \mu_\eps$ are analytic is
obtained by removing from a ball centered at the origin
a sequence of smaller balls with centers on a curve.
The radii of the balls decrease exponentially fast with the distance
of the centers to the origin.

It is interesting to remark that the Lindstedt expansions
produce approximate solutions for all  sufficiently small
values of $\eps$. What determines the shape of
the analyticity domain is the values of $\eps$ for which we
can  verify the non-degeneracy conditions (notably the
Diophantine conditions) of Theorem~\ref{whiskered}.  This emphasizes
the importance of the non-degeneracy conditions rather than just
the smallness of the error.

The study of Lindstedt series followed by a validation step for
low--dimensional tori in Hamiltonian systems was developed
in \cite{MR1720891}. The paper \cite{MR1720891}
considers also  the case of weak hyperbolicity: systems which
are perturbations of an integrable one, which does not have any
hyperbolicity, so that the hyperbolicity is generated by the
perturbation.  We also mention \cite{Masdemont}, where Lindstedt
series expansions for whiskered tori have been constructed in a problem
of Celestial Mechanics. More recently, \cite{Bustamante} has
computed Lindstedt series for Lagrangian tori  for
the dissipative case and determined the
domain of analyticity of the tori, as well as several other
properties of the series (monodromy, Gevrey properties).  The
numerical results of \cite{Bustamante} present several interesting
conjectures for the singular problem.

\begin{comment}
In this paper, we will present only results for maps, but in
\cite{CallejaCL13a}, it is shown that the results for
flows can be deduced from results for maps by taking
sections. Of course, adapting the proof from maps to flows is
rather straightforward.
\end{comment}

\subsubsection{Organization of the paper}
This paper is organized as follows. In Section~\ref{sec:def} we provide
basic notions, such as  conformally symplectic systems, Diophantine
frequency vectors, function spaces, cocycles,  invariant bundles, dichotomies.
In Section~\ref{sec:statement}, we state the first main
result, Theorem~\ref{whiskered}.
In Section~\ref{sec:geometry} we present some interactions
of the hyperbolicity assumptions and the geometry.
In particular,
in Section~\ref{sec:automatic} we present the automatic reducibility,
a key ingredient of the proof of Theorem~\ref{whiskered}, and in Section
~\ref{sec:triviality} we prove that the center bundle has to be trivial.
In Section~\ref{sec:whiskered} we provide the proof of Theorem~\ref{whiskered},
while the proof of the local uniqueness of the solution is given
in Section~\ref{sec:uniqueness}.
The study of the analyticity domains in the symplectic
limit is presented in Section~\ref{sec:domain}.
We also call attention to \cite{CallejaCL18b}, which can serve
as a reading guide for this paper.

\section{Some preliminary definitions and standard
results}\label{sec:def}
In this Section, we collect some definitions
and some elementary lemmas  that will
be used in the formulation (and proof) of the results.
Of course, this Section may be considered mainly as reference
and could be skipped in a first reading.

\subsection{Conformally symplectic systems}\label{sec:CS}

We start by introducing the definition of conformally symplectic
mappings and flows (see, e.g., \cite{Banyaga02,CallejaCL13a,DettmannM96,WojtkowskiL98}).

Let $\M = \torus^n \times B$ be a symplectic manifold of dimension
$2n$ with $B \subseteq \real^n$ an open, simply connected domain with
smooth boundary. We assume that $\M$ is endowed with the standard
scalar product and a symplectic form $\Omega$. We do not assume that
$\Omega$ has the standard form. In the study of the small dissipation limit
we will assume that $\Omega$ is exact, but Theorem~\ref{whiskered} does not need
the  assumption of exactness.

\begin{definition}\label{defCS}
We say that $f:\M\rightarrow \M$ is a conformally symplectic diffeomorphism, when there exists a constant $\lambda$ such that
\begin{equation} \label{conformallysympmap}
f^* \Omega = \lambda\, \Omega\ .
\end{equation}
\end{definition}

\begin{remark}\label{lambdaconstant}
When $n=1$, any orientable manifold is
symplectic for any non-degenerate 2-form.
If we allow that $\lambda$ is a function,
any diffeomorphism
is conformally symplectic.

When $n\geq 2$, any function $\lambda$
satisfying \eqref{conformallysympmap} has to be a constant
for a connected manifold $\M$ (see \cite{Banyaga02}).
It suffices to observe that
\[
0 = f^* d \Omega =  d (f^* \Omega)  = d( \lambda \Omega) =
d \lambda \wedge \Omega \ .
\]

It is shown in \cite{Banyaga02}
that, when the dimension of the space is $4$ or higher,
the above implies that $d\lambda = 0$ (a simple argument is
just to use locally the Darboux form of $\Omega$).

Throughout this paper we always consider $\lambda$ constant
since the whiskered tori we are concerned with only appear
when $n \ge 2$.
\end{remark}

\begin{remark}
The constant $\lambda$ will be real when we consider
real maps. It will be a complex
constant in Section~\ref{sec:domain} devoted
to the analyticity properties in $\eps$, where it is natural
to consider complex maps.
In the physical applications,  the complex maps are
such that they take real values for real arguments.
\end{remark}

\begin{comment}

\begin{definition}
A vector field $X$ is said to be a {\sl conformally symplectic flow},
if there exists a function $\eta:\real^{2n}\to\real$ such that
\begin{equation}\label{conformallysympflow}
L_X\Omega = \eta \Omega\ ,
\end{equation}
where $L_X$ denotes the Lie derivative.
\end{definition}

Notice that for $\eta$ constant, the time $t$ flow $\Phi_t$ satisfies
$$
(\Phi_t)^* \Omega = \exp (\eta t) \Omega\ .
$$
\end{comment}

\subsection{Expressions in coordinates}\label{sec:expressions}
In some computations later, we will find it convenient to
use matrix notation for the computations (recall that we are
assuming that the phase space we are considering is Euclidean).

We will consider the tangent of the phase space endowed with the
standard inner product, which does not depend on the base point
(later in Section~\ref{sec:triviality}, we will find it useful
to use metrics depending on the point).

Given a linear operator $A: \T_x \M \rightarrow \T_y \M$
(where $\T_x\M$ denotes the tangent space of $\M$ at $x$),
we denote the
adjoint $A^T$ as the linear operator  from $\T_y \M$ to $\T_x\M$
 that satisfies
$$
< u, A^T v >  = < Au, v>  \quad \forall u  \in \T_x \M, v \in \T_y\M\ .
$$
Once we fix the inner product, we can identify the symplectic
form $\Omega$ with an operator:
\begin{equation}\label{identification}
\Omega_x(u, v) = \langle u, J_x v\rangle  \quad \forall u, v \in \T_x \M\ .
\end{equation}
The asymmetry of the symplectic form means that
\[
J^T_x = - J_x\ .
\]
Using the identification \eqref{identification}  between the symplectic
form and the  operator $J_x$, we see that a map $f$ is conformally symplectic
if and only if
\begin{equation}\label{conformallysym}
Df^T(x) J_{f(x)} Df(x)  = \lambda J_{x}\ .
\end{equation}
In this paper we will not assume that $J_x$ is constant or that it
has the standard form $J_x = \begin{pmatrix} 0 & \Id \\ -\Id &0 \end{pmatrix} $.
Non-constant symplectic forms appear naturally in the study of
neighborhoods of fixed points, in the study of PDE's, etc.

\begin{remark}
The relation between the operator $J$, the symplectic form
and the metric
is a very useful tool in modern symplectic geometry.
We mention the books (\cite{Berndt01, Cannas01,  McDuffS17}). A much deeper study of
the applications of $J$ as a complex structure is in
\cite{McDuffS12}.

In this paper, we will indeed use the relation between
the operator $J$ and the metric to obtain a result on the global structure of the center bundle,
namely, that it is diffeomorphic to a product bundle,
see Lemma~\ref{lem:istrivial}.
\end{remark}

\subsubsection{Exactness}
\label{sec:exactness}
We say that the symplectic form is exact when there exists $\alpha$ such that
\[
\Omega = d\alpha\ .
\]
A map  $f$  is exact when there exists a single-valued function $G$, such that
$$
f^* \alpha - \lambda \alpha = d G\ .
$$
Note that the fact that the mapping is conformally symplectic
can be written as $d( f^* \alpha) = f^*\Omega = \lambda d \alpha$, which gives
$d(f^*\alpha - \lambda \alpha) = 0 $. Hence, exact maps are conformally
symplectic.

When $\lambda \ne 1$,  the paper \cite{CallejaCL13a} does
not need exactness of the map to produce invariant tori.
Nevertheless, in the symplectic case ($\lambda = 1$) it is well known
that the exactness is a necessary condition to have Lagrangian invariant tori
which are homotopically non-trivial (the case of maximal
homotopically trivial tori is discussed in \cite{FoxL15}).

Lower dimensional tori can exist in non-exact symplectic systems,
but if the tori have some non-trivial homotopy, one needs
that some cohomology of $f^*\alpha - \alpha$ vanishes.
The considerations of exactness come into play only when we
consider the symplectic limit in Section~\ref{sec:domain}.

For simplicity, in this
paper,
we will assume that $f$ is exact, even if parts of
this assumption can be weakened  to the vanishing of
some cohomology class of $f^*\alpha - \alpha$ depending on the
topology of the embedding of the  torus.

\subsection{Diophantine properties}

We will assume that  frequency vectors of the whiskered tori
 satisfy the following Diophantine inequality.

\begin{definition}\label{def:DC}
Let $\omega \in\real^d$, $d\leq n$, $\tau\in\real_+$.
Let the quantity $\nu(\omega;\tau)$ be defined as
\beq{DC}
\nu(\omega;\tau) \equiv \sup_{k\in\integer^d\setminus\{0\}}
\Big(|e^{2\pi ik\cdot\omega}-1|^{-1}\ |k|^{-\tau}\Big)\ ,
\eeq
where $\cdot$ denotes the scalar product and $|k|\equiv |k_1|+...+|k_d|$ with $k_1$, ..., $k_d$
the coordinates of $k$.
We say that $\omega$ is Diophantine of class $\tau$ and constant
 $\nu(\omega;\tau)$, if
$$
\nu(\omega;\tau) < \infty\ .
$$
We denote by $\D_d(\nu,\tau)$ the set of Diophantine vectors of class $\tau$ and constant $\nu$.

For $\lambda\in\complex$, we define the quantity $\nu(\lambda;\omega,\tau)$ as
\begin{equation}\label{nudefined}
\nu(\lambda;\omega,\tau) \equiv \sup_{k\in\integer^d\setminus\{0\}}
\Big(|e^{2\pi ik\cdot\omega}-\lambda|^{-1}\ |k|^{-\tau}\Big)\ .
\end{equation}
We say that $\lambda$ is Diophantine with respect to $\omega$ of
class $\tau$ and constant $\nu(\lambda;\omega,\tau)$ if
$$
\nu(\lambda;\omega,\tau)<\infty\ .
$$
\end{definition}

Note that the quantity \eqref{nudefined} makes sense for
any complex number $\lambda$. In Theorem~\ref{whiskered} we will need
only to consider  $\lambda \in \real$, but in Theorem~\ref{thm:domain}, we
will use complex $\lambda$. Note that when $|\lambda| \ne 1$,
we have that $\nu(\lambda; \omega, \tau) < \infty$. But, as $\lambda$
approaches the unit circle, depending on the limit on the unit
circle,  the $\nu$ could
either become unbounded or remain bounded.

Note that the definition of $\lambda$ Diophantine with respect to
$\omega$ makes sense even if $\omega$ itself is not Diophantine
(in particular if $|\lambda| \ne 1$, $\lambda$ is Diophantine with
respect to $\omega$ for all $\omega$, even rational ones).
Of course, the Diophantine exponents of $\omega$ and of
$\lambda$ with respect to $\omega$ are independent.

In our applications, however, it is natural to assume both that
$\omega$ is Diophantine and that $\lambda$ is Diophantine with respect to
$\omega$. For simplicity of the notation we will only use
a common exponent that works both for the Diophantine exponent
of $\omega$ and for the exponent of $\lambda$ with respect to $\omega$.

\begin{remark}
In the study of analyticity domains in Section~\ref{sec:domain},
we will consider $\omega$ fixed, but $\lambda$ will change.
Hence, we prefer to think of $\nu(\lambda; \omega, \tau)$
mainly as a function of $\lambda$ and think of $\omega$ as a
parameter that remains fixed.
\end{remark}

\begin{comment}
Again, we remark that the results of Section~\ref{sec:whiskered} will be stated
for mappings, but they can be extended in a straightforward way to continuous systems, for which
the following definitions of conformally symplectic flows and Diophantine frequency vectors are
introduced.
The results for flows can be deduced from the results for maps or proved by similar
arguments.

The Diophantine condition for flows reads as follows.

\begin{definition}
Let $\omega\in\real^d$, $d\leq n$, $\tau\in\real_+$. Let the quantity
 $\nu(\omega;\tau)$
be defined as
$$
\nu(\omega, \tau)\equiv
\sup_{k\in\integer^d\setminus\{0\}} \Big(|\omega \cdot k| \ge \nu |k|^{-\tau}\Big)\ .
$$
We say that $\omega$ is Diophantine of class $\tau$ and constant
$\nu(\omega;\tau)$, if
$$
\nu(\omega;\tau) < \infty\ .
$$
For $\lambda\in\complex$, we define the quantity
 $\nu(\lambda;\omega,\tau)$ as
$$
\nu(\lambda;\omega,\tau) \equiv \sup_{k\in\integer^d\setminus\{0\}}
\Big(|i\omega \cdot k+\lambda| \ge \nu |k|^{-\tau}\Big)\ .
$$
We say that $\omega$ is $\lambda$-Diophantine of class $\tau$ and constant $\nu(\lambda;\om,\tau)$ if
$$
\nu(\lambda;\om,\tau)<\infty\ .
$$
\end{definition}

\end{comment}

\subsection{Invariant rotational tori}\label{sec:inv}
Let $\Upsilon \subset \M$ be diffeomorphic to a torus.
We say that $\Upsilon$ is a rotational
torus for a map $f$, when $f(\Upsilon) = \Upsilon$ and
the dynamics of $f$ restricted to $\Upsilon$ is conjugate to
a rotation. Precisely, we start with the following definition
of rotational invariant tori, non necessarily of maximal dimension.

\begin{definition}
Let $f$ be a differentiable diffeomorphism of  $\M$.
For $0<d\leq n$, let $K: \torus^d \rightarrow \M$ be a
differentiable embedding.
Let $\omega \in \real^d$, denote
by $T_\omega:\torus^d \rightarrow \torus^d$ the rotation of vector $\omega$,
namely $T_\omega(\th) = \th + \omega$.

We say that $K$ parameterizes a rotational invariant torus,
if the following invariance equation holds:
\begin{equation}\label{invariance}
f \circ K  = K \circ T_\omega\ .
\end{equation}
\end{definition}

The relation \eqref{invariance} appears frequently in ergodic
theory and it is described as \emph{``The rotation $T_\omega$ is
a factor of $f$''} or \emph{``The rotation $T_\omega$ is semiconjugate to
$f$"}.
\medskip

As it is well known (\cite{Moser67}),
to find an invariant torus with prescribed frequency
of a conformally symplectic
system, we will need to introduce a \emph{drift} parameter (\cite{CallejaCL13a}) and,
precisely, we will consider a family $f_\mu$ of conformally symplectic mappings. Then, we
will try to find a parameter vector $\mu\in\real^d$ and an embedding of the torus
$K$ in such a way that the following invariance equation is satisfied:
\begin{equation}\label{inv}
f_\mu \circ K = K \circ T_\omega\ .
\end{equation}

Note that the equation \eqref{inv} is an equation for both $\mu$ and $K$.
The equation \eqref{inv} will be the centerpiece of our analysis.
We will develop a quasi-Newton method for it, using the geometric
properties of the map to analyze the linearization of \eqref{inv}.

\subsubsection{Normalization}\label{sec:normalization}
The equation \eqref{inv} is underdetermined.
Note that if $(\mu, K)$ is a solution of \eqref{inv},
then, so is  $(\mu, K\circ T_\sigma)$ for any $\sigma \in \real^d$.
Note that this lack of uniqueness can be interpreted as choosing the origin
in the space of the parametrization.

We will show that, in many cases, the choice of
the origin of the parameterization  is the only source of
local non-uniqueness. By choosing some normalization that fixes
the origin,  we will show
that one obtains  local uniqueness (i.e. the
normalized parameterizations of tori which are close
enough coincide) and, hence, one can discuss
smooth dependence on parameters, etc.  Of course, global uniqueness
is false unless one makes global assumptions.

In Section~\ref{sec:uniqueness} we give a precise statement of the local uniqueness of
the solution of the invariance equation \equ{inv} under some
normalization that fixes the origin of coordinates.

Before starting with the main results let us prove the
easy result given by Proposition~\ref{prop:easy}.

If $\omega$ is nonresonant
(i.e., for any $k \in \integer^{d} \setminus \{0\}$, we have
$|\omega\cdot k| \not \in \nat$), then $\{T^n_\omega (\th)\}$ is
dense in the torus for any $\th$ and
the embedding $K$ is almost uniquely determined as stated in the following Proposition.

\begin{proposition}
\label{prop:easy}
Let $f$ be a differentiable diffeomorphism of $\M$.
Assume that $\omega\in\real^d$ is nonresonant.
Let $K_1, K_2$ be continuous mappings satisfying \eqref{invariance}
and their ranges have a non-empty intersection.
Then, there exists $\sigma \in \real^d$ such that
\[
K_1 = K_2 \circ T_\sigma\ .
\]
\end{proposition}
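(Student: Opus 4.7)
The plan is to exploit the invariance equation to propagate a single point of coincidence along a dense orbit, and then use continuity to conclude. There is essentially no subtle obstacle here; the argument is a standard Kronecker-density argument and I will outline it briefly.

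First I would pick a point of intersection. Since the ranges of $K_1$ and $K_2$ meet, there exist $\theta_1, \theta_2 \in \torus^d$ with $K_1(\theta_1) = K_2(\theta_2)$. Define the candidate translation by
\[
\sigma := \theta_2 - \theta_1 \in \real^d/\integer^d,
\]
so that at $\theta_1$ we have $K_1(\theta_1) = K_2(\theta_1 + \sigma)$.

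Next I would iterate the invariance equation \eqref{invariance}. Applying $f$ to both sides and using $f\circ K_j = K_j\circ T_\omega$ for $j=1,2$ gives, by induction on $n\in\integer$ (using that $f$ is a diffeomorphism, so we can also go backwards),
\[
K_1(\theta_1 + n\omega) = f^n\bigl(K_1(\theta_1)\bigr) = f^n\bigl(K_2(\theta_1+\sigma)\bigr) = K_2(\theta_1 + \sigma + n\omega)
\]
for every $n\in\integer$. Thus $K_1$ and $K_2\circ T_\sigma$ agree on the whole orbit $\{\theta_1 + n\omega : n \in \integer\}$.

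Finally I would invoke Kronecker's theorem: the nonresonance hypothesis $\omega\cdot k\notin\integer$ for all $k\in\integer^d\setminus\{0\}$ implies that $\{\theta_1 + n\omega \bmod \integer^d\}_{n\in\integer}$ is dense in $\torus^d$. Since both $K_1$ and $K_2\circ T_\sigma$ are continuous and agree on this dense set, they agree everywhere, giving $K_1 = K_2\circ T_\sigma$ as desired. The only mildly nontrivial ingredient is Kronecker density, which is a classical fact and matches the nonresonance condition stated in the hypothesis.
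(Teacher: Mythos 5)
Your argument is essentially identical to the paper's: pick a point of coincidence, propagate it along the $\omega$-orbit via the invariance equation, and conclude by Kronecker density and continuity. The only cosmetic difference is that you iterate over $n\in\integer$ using $f^{-1}$, while the paper iterates only over $j\in\nat$ (which already suffices for density); both are correct.
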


\begin{proof}
We can find $\sigma_1, \sigma_2 \in \torus^d$ such that
$K_2(\sigma_1)= K_1(\sigma_2)$. Applying $f$ to both sides and using
\eqref{invariance}, we obtain $K_2(\sigma_1 + \omega) = K_1(\sigma_2 + \omega)$.
Repeating the argument, we obtain that for all $j \in \nat$,
$K_2(\sigma_1  + j \omega) = K_1( \sigma_2 + j \omega)$, which gives
\begin{equation} \label{coincidence}
K_2( \th ) = K_1(\th + \sigma_2 - \sigma_1 )\ ,\qquad \th \in \{j \omega + \sigma_1\}_{j \in \nat}\ .
\end{equation}

Since $\{\sigma_1+j\omega \}_{j \in \nat}$ is dense in the torus and $K_1, K_2$
are continuous,
we have the equality \eqref{coincidence}  for all $\th$ in the torus.
\end{proof}

\subsection{Definition of function spaces}
To make precise estimates on the quantities involved in the proof, most notably
the error associated to an approximate solution of \equ{inv}, we need to fix a function space and a norm, precisely
the space and norm of analytic functions as in the definition given below.

\begin{definition}\label{def:spaces}
For $\rho >0$ we denote by $\torus_\rho^d$ the set
\[
\torus^d_\rho = \{ z \in \complex^d/\integer^d \, :\ {\rm Re}(z_j)\in\torus\ ,\quad |\Im(z_j)| \leq \rho\ ,\quad j=1,...,d\}\ .
\]

Given $\rho>0$ and a Banach space $X$,
we denote by $\A_\rho(X)$ the set of functions
from $\torus^d_\rho$ to $X$  which are analytic in the interior of
$\torus_\rho^d$ and that extend continuously to the boundary
of $\torus^d_\rho$.

 We endow $\A_\rho$ with the norm
$$
\|f\|_\rho = \sup_{z\in\torus_\rho^d}\ |f(z)|\ ,
$$
which makes it into a Banach space.
\end{definition}

For later use, we introduce the norm of a vector valued function $f=(f_1,\ldots,f_n)$ as
$\|f\|_\rho=\sqrt{\|f_1\|^2_\rho+\ldots+\|f_n\|^2_\rho}$ and the norm of
an $n_1\times n_2$ matrix valued
function $F$ as $\|F\|_\rho=\sup_{\chi\in\real^{n_2}_+,|\chi|=1}
\sqrt{\sum_{i=1}^{n_1}(\sum_{j=1}^{n_2}\|F_{ij}\|_\rho\, \chi_j)^2}$.
In this way, we have the customary inequalities for the norm of
the product of a matrix and a vector.

\subsection{Cocycles and invariant bundles}
\label{sec:cocycle}

In this Section, we recall the standard definitions on growth
properties of the products \eqref{product} below. These properties are quite
standard (\cite{SackerS74, Coppel78,MitropolskySK03}).
The hyperbolicity of cocycles -- over more
general systems than rotation --  is
treated specially in \cite{ChowL95}.
For a pedagogical treatment, see also the expository
chapters of  \cite{deViana17}.

 Later on, in
Section~\ref{sec:geometry}, we will see that there is a deep
relation between the growth properties of the cocycle and the
conformal symplectic properties of the map.
These properties are somewhat surprising since
they show an interplay between the dynamics and the geometric
properties of the tori. Quite notably, we will establish that the
center bundle of a rotational invariant torus is trivial
in the sense of bundle theory. That is, it can be written as a product
bundle.

For the sake of notation, we will assume
that $\M$, the phase space, is an Euclidean manifold.
In that way, we can identify the different tangent spaces.

Since we are identifying the spaces, we can think of
all the factors $Df_\mu \circ K \circ T_{j\omega}$
as analytic functions from $\torus^d_\rho$ to $n\times n$ matrices.

We will see that in our
study of corrections to the invariance equation,
to reduce the error we are led to the study of
products of the form
\begin{equation} \label{product}
\Gamma^j \equiv Df_\mu \circ K \circ T_{(j-1)\omega} \times
Df_\mu \circ K \circ  T_{(j-2)\omega} \times  \cdots \times  Df_\mu\circ K\ .
\end{equation}
This is a particular case of products of
the form
\begin{equation}\label{cocyclerotation}
\Gamma^j = \gamma\circ T_{(j-1) \omega} \times \gamma\circ T_{(j-2) \omega} \times \cdots \times \gamma\ ,
\end{equation}
when we take:
\begin{equation}\label{gammaform}
\gamma(\th) = Df_\mu\circ K(\th)\ .
\end{equation}
Note that if the torus was invariant, then \eqref{product} would become
\[
\Gamma^j = Df^j_\mu\circ K\ ,
\]
which makes it clear that the cocycle $\Gamma^j$ has a dynamical interpretation for invariant tori.

An important property of products  of the form \eqref{cocyclerotation} is:
\begin{equation}\label{cocycle}
\Gamma^{j+m} = \Gamma^j\circ T_{m\omega}\ \Gamma^m .
\end{equation}

Products of the form \eqref{cocyclerotation} have been studied extensively
in the mathematical literature under the name of
\emph{``quasiperiodic cocycles''} or
\emph{``cocycles over a rotation''}. The property \eqref{cocycle} is
called the \emph{cocycle} property.
In this Section, we will collect some properties of
the cocycles, specially the properties of asymptotic growth
and exponential trichotomies (\cite{SackerS74,Coppel78}).

\subsubsection{Exponential Trichotomies}
\label{sec:trichotomies}

The asymptotic growths of the products \eqref{cocyclerotation} are
important for the study of the invariance equation.

\begin{definition} \label{def:trichotomy}
We say that the cocycle \eqref{product} admits an exponential trichotomy when
we can find a decomposition
\begin{equation}\label{splitting}
\real^n =  E_{\th}^s \oplus E_{\th}^c \oplus E_{\th}^u\ ,
\qquad \theta\in\torus^d
\end{equation}
and rates of decay
\beqano
&\lambda_- < \lambda_c^- \le \lambda_c^+ <  \lambda_+\ , \nonumber\\
& \lambda_- < 1 < \lambda_+
\eeqano
and a constant $C_0 > 0$ that characterize the decomposition:
\begin{equation}\label{growthrates}
\begin{split}
& v\in  E_{\th}^s\ \iff   |\Gamma^j(\th) v | \le C_0 \lambda_-^j |v|,\quad j \ge 0 \\
& v\in  E_{\th}^u\ \iff   |\Gamma^j(\th) v | \le C_0 \lambda_+^j |v|,\quad j \le 0 \\
&v\in  E_{\th}^c\ \iff
\begin{matrix}
|\Gamma^j(\th) v | \le C_0 (\lambda_c^-)^j  |v|, \quad j \ge 0 \\
|\Gamma^j(\th) v | \le  C_0 (\lambda_c^+)^j |v|, \quad  j \le  0\ . \\
\end{matrix}
\end{split}
\end{equation}
\end{definition}

We will refer to $C_0$ and the rates
$\lambda_- , \lambda_c^- , \lambda_c^+,  \lambda_+$
as the constants characterizing the splitting. We will not introduce
the dependence of these quantities on the splitting in the notation.

Even if we assume that the space is Euclidean, we allow that the
invariant sub-bundles are not trivial in the sense of bundle theory.
That is, they are not isomorphic to a product bundle.

It will be convenient for notation to consider trichotomies as
just a pair of dichotomies
\begin{equation}\label{dichotomies}
\begin{split}
&\real^n =  E^s \oplus E^{\hat s} \\
&\real^n =  E^{\hat u} \oplus E^{u}\ ,
\end{split}
\end{equation}
where we denote by $E^{\hat s} = E^c \oplus E^u$
($\hat s$ stands for the symbols that are not $s$).
We use similar notations for other symbols.

The  dichotomies  in \eqref{dichotomies} are also characterized by
rates of growth.  If we have both dichotomies, we can reconstruct
the spaces in the trichotomy taking
$E^c_\th = E^{\hat s}_\th \cap E^{\hat u}_\th$.

\subsubsection{General properties of splittings and their distances}
\label{sec:splittings}

Given a splitting  of the Euclidean space as in \eqref{splitting}, we denote by
$\Pi^s_\th, \Pi^c_\th, \Pi^u_\th$ the projections corresponding to
the spaces in the splitting. Note that the projections
$\Pi^{s/c/u}_\th$ depend
on the whole splitting, but we do not include this in the
notation.

We can think of $\Pi^\sigma_\th$ both as mappings from
$\real^n$ to $\real^n$ or as mappings from $\real^n $ to
$E^\sigma_\th$. The advantage of the former is that we can
compose mappings corresponding to different splittings.
We will not always emphasize the interpretation that is taken.

The space of possible splittings is related to the Grassmanians
(\cite{MilnorS74}).
In this paper, in an iterative step, we will need to refine
the splittings. Hence, it will be important for us to
give an efficient description of the splittings close to
another one and also give precise definitions of
how close splittings are and define the convergence.
Of course, for our applications, some smoothness considerations will be needed
and the norms involved in the measure of distances will be smooth norms.

\subsubsection{Explicit description of a neighborghood of
splittings}

Given a splitting $E$ and another splitting $\widetilde E$ close
to it, we can write uniquely  each of the spaces in $\widetilde E$ as
the graph of a function from the corresponding space in
$E$ to the complementary spaces in $E$. That is,
we can find linear  functions
$A^\sigma_\th: E^\sigma_\th \rightarrow E^{\hat \sigma}_\th$
(recall that $E^{\hat \sigma}$ denotes the
sum of the spaces in the splitting which
are not indexed by $\sigma$) in such a way that

\begin{equation} \label{graph}
\widetilde E^\sigma_\th = \{ v \in \real^n, v = x + A^\sigma_\th x \, | \,
x \in E^\sigma_\th \}\ .
\end{equation}
In the sequel we fix once and for all an analytic splitting that we
will call  the
\emph{``reference splitting''}, and we will describe
all the splittings we use in terms of the $A^\sigma$
as in \eqref{graph}.

In our application, we  will see that we can take as the
reference splitting the initial approximate splitting in the hypothesis
of Theorem~\ref{whiskered}. Under the assumption that the  initial
error is small enough, we will see that all the splittings that
appear during the iterative procedure are inside the neighborhood
of maps that can be described as in \eqref{graph}.

It is useful to think of \eqref{graph} as providing a system of
coordinates of a neighborhood of the reference splitting in the
(highly nonlinear) space of all splittings.

\subsubsection{Several notions of distance among bundles and splittings}
\label{sec:distancesplitting}

In Grassmanian geometry (\cite{MilnorS74}) it is customary to define
the distance between subbundles of an ambient bundle,
by fixing a metric in the ambient bundle (in our case, the ambient
bundle will be the tangent space to the phase space)
and introducing the orthogonal projection $P^\perp_{E_\th}$, where $E_\th$ is
the fiber of a bundle $E$ over $\th$.

For $\rho>0$, the distance between two
subbundles $E, \tE$ of the ambient bundle is defined as:
\begin{equation} \label{distancebundle}
dist_\rho(E, \tE)  = \| P^\perp_{E_\th} - P^\perp_{\tE_\th} \|_\rho\ .
\end{equation}

Note that the intrinsic distance between two bundles depends
just on the bundles themselves and they do not depend on whether
they are part of a  splitting.

The distance between two splittings of the tangent space
into bundles can be measured by the
maximum of the distances between the corresponding bundles that give
the splitting.  Later we will discuss other notions of
distance among splittings.

In our case, we will consider bundles based on
a parameterization as $\| \cdot \|_\rho$, analytic norms in a complex
extension of the torus as in Definition~\ref{def:spaces}. This will
induce an analytic distance between bundles based on a parameterization.

\vskip.1in

\leftline{\emph{Other equivalent ways of measuring the
distance among splittings}}

It is clear that \eqref{distancebundle} satisfies the triangle inequality
and it is indeed a distance. As we will see later, for
our application there will be other quantities (not distances)
which bound \eqref{distancebundle} from above and below, but
which are easier to compute. We will also need to study
upper bounds for the distance between splittings and how do
they change when the problem changes.
We will try to work with the projections corresponding to
the splitting $\Pi^\sigma$  and not with the orthogonal projections.

If we have two splittings $E^\sigma$ and $\widetilde E^\sigma$ with projections
$\Pi^\sigma$ and $\widetilde \Pi^\sigma$, we can measure the distance  between the
splittings by:
$$
\left(
 \max_{\sigma, \sigma' \in \{s,c,u\} \atop \sigma \ne \sigma'}
\max(
\| {\widetilde  \Pi}^\sigma \Pi^{\sigma'} \|,
\| \Pi^\sigma {\widetilde \Pi}^{\sigma'} \| )
\right)^{-1}\ ,
$$
where $\| \cdot \|$ can stand for any  norm. As before, we will take
analytic norms in a neighborhood of the torus.

We can also use as a measure of the distance between the reference
splitting and a splitting given by \eqref{graph}, the quantity
\begin{equation} \label{distance2}
\max_\sigma \| A_\theta^\sigma\|\ ,
\end{equation}
where again $\| \cdot \|$ stands for a smooth norm.
Note that we are thinking of the linear  maps $A_\theta^\sigma$ as mappings
from $E^\sigma_\th$ to $\real^n$.

More generally, if we fix a reference splitting, given two
splittings $E_1^\sigma$, $E_2^\sigma$  that can be parameterized
by giving the  mappings $A_{\theta,1}^\sigma$, $A_{\theta,2}^\sigma$
as above, the quantity
\begin{equation} \label{distance3}
\max_\sigma \| A_{\theta,1}^\sigma - A_{\theta,2}^\sigma\|
\end{equation}
gives a measure of the distance of the splittings.

Note that \eqref{distance2} and \eqref{distance3} are
not distances. Nevertheless, they can be used in
place of a distance in the sense that, in a small
neighborhood in the space of splittings,
there
are constants that bound one in terms of any of the others.
The reason for this equivalence  is that there are  algebraic expressions
giving each of  $P^\perp_{E^\sigma}$, $\Pi^\sigma$, $A_\theta^\sigma$  in terms of
the others, see \cite{deViana17}.

\subsubsection{Standard properties of splittings satisfying \eqref{growthrates}}

One of the consequences of
\eqref{growthrates} (see \cite{Coppel78})
 is that the splittings
depend continuously on $\th$ (actually in H\"older fashion)
 and, hence that the projections $\Pi^\sigma$,
$\sigma =  s,u,c$, are  uniformly bounded, see \cite{SackerS74}.
Using the fact that the dynamics on the base is a rotation, we
will later bootstrap the regularity of the splittings to analytic
(see \cite{HaroL18}).
The H\"older continuity remains valid (and is optimal) if
the dynamics on the base is more complicated than a rotation.

Another (slightly non-trivial\footnote{
The result  would  be trivial if the characterization
was valid for any $C_0$ rather than a different number.  The  key
of the proof is to show that one can redefine the norms
in such a way that $C_0 =  1$ and that the norm of
the operators is also small.}, see \cite{SackerS74})
 consequence of \eqref{growthrates} is that the bundles characterized by
\eqref{growthrates} are invariant
in the sense that:
$$
\gamma(\th) E^\sigma_\th = E^{\sigma}_{\th + \omega}\ .
$$

\subsubsection{Approximately invariant splittings}
Given a splitting $E^s_\th  \oplus E^c_\th \oplus E^u_\th$ and
a cocycle, $\gamma(\th)$, we
define
\beq{sigmasigma}
\gamma_\theta^{\sigma, \sigma'} =
 \Pi^\sigma_{\th +\omega}  \gamma(\th) \Pi^{\sigma'}_\th\ .
\eeq
The splitting is invariant under the cocycle if
and only if
\[
\gamma_\theta^{\sigma, \sigma'} \equiv 0\ , \qquad \sigma \ne \sigma'\ .
\]
Again, we note that we can think of the
$\gamma^{\sigma, \sigma'}$ either as linear  maps in $\real^n$
or, more geometrically, as maps from $E^\sigma$ to $E^{\sigma'}$.
Thinking of them as maps in $\real^n$ allows us
to compare maps in different spaces and will be useful in
perturbative calculations.

Hence, it is natural to measure the lack of invariance
of the splitting under the cocyle $\gamma$ by
\begin{equation}\label{approximatenorm}
\I_\rho(\gamma, E)  \equiv  \max_{\sigma, \sigma' \in \{s,c,u\} \atop \sigma \ne \sigma'}
\| \gamma_\theta^{\sigma, \sigma'} \|_\rho\ ,
\end{equation}
where $\| \cdot \|_\rho $ is the supremum on $\torus^d_\rho$.
The choice of a smooth norm is consistent with what
we will do in the paper, but of course, other smooth norms
could also be considered.

We note that given a splitting specified with some finite error
and some $\gamma$ also specified with a finite error, it is
possible to obtain error bounds for \eqref{approximatenorm}
with finite calculations. In particular, they can be verified
with computer assisted proofs.

The following definition taken from \cite{deViana17},
formulates  a notion of
approximately invariant splittings and approximately
hyperbolic cocycles.

\begin{definition}\label{approximatehyperbolic}
Given a cocycle $\gamma$ and a 3-splitting
$E = E^s_\th \oplus E^c_\th \oplus E^u_\th$,
we  can write the  cocycle in blocks as in
\eqref{sigmasigma}.

We say that the splitting is $\eta$ approximately invariant
if
\begin{equation}\label{approximatelyinvariant}
 \max_{\sigma, \sigma' \in \{s,c,u\} \atop \sigma \ne \sigma'}
\| \gamma_\theta^{\sigma, \sigma'} \|_\rho \le \eta\ .
\end{equation}

We say  that the cocycle is approximately  hyperbolic
with respect to the splitting $E$,
if the cocycle
\[
\tilde \gamma_\theta =
\begin{pmatrix}
& \gamma^{s,s}_\theta & 0 & 0 \\
&0 &  \gamma^{c,c}_\theta & 0  \\
&0 & 0 & \gamma^{u,u}_\theta
\end{pmatrix}
\]
satisfies the trichotomy properties in Definition~\ref{def:trichotomy},
with $\gamma_\th^{\sigma, \sigma}$ defined in \equ{sigmasigma}.
\end{definition}

We again note that the notion of measurements of
approximate invariance involves the use of a smooth norm
to measure the distance.

\subsection{The closing lemma for approximately invariant
splittings}\label{sec:closinglemma}

In this Section, we present and prove a result generalizing slightly
and making more precise
Proposition 5.2 of \cite{FontichLS}.
The main result in this Section is Lemma~\ref{lem:closing}
that roughly says that if a splitting is approximately invariant
(with a sufficiently small error)
for a cocycle and the cocycle is approximately hyperbolic
for this splitting in the sense of Definition~\ref{approximatehyperbolic},
then there is a true invariant splitting.
The proof of Lemma~\ref{lem:closing} will be given in Appendix~\ref{app:closing}, where
we present very detailed estimates.

For the applications we have in mind, it will be important that the
size of the corrections of the bundles needed
to make them invariant  can  be bounded by the error in the
invariance and that the constants
involved in the lemma can be chosen uniformly in a neighborhood of
splittings and cocycles. We have included these precisions
in the statement of Lemma~\ref{lem:closing}.

The ideas of the proof are  standard among the specialists
in hyperbolic dynamical systems.   The basic idea
is that we use the parameterization of splittings
by the mappings $A_\theta^\sigma$ as in \eqref{graph}, formulate
an invariance equation and manipulate it in a form that can
be shown to be a contraction  in appropriate spaces.
After that, we will need to estimate the changes in
the hyperbolicity characteristics.

If we fix a reference  splitting $E = E^s \oplus E^c \oplus E^u$,
we can parameterize
all the splittings close to it by a triple of linear functions
as in \eqref{graph}. Note that we are not assuming that the reference
splitting is invariant, but we will assume it is
approximately invariant. The results described below will hold for a
sufficiently small neighborhood of the splittings in the Grasmannian
space. More precisely, we will consider splittings that can be
expressed as in \eqref{graph} with sufficiently small $\|A_\theta^\sigma\|$.

\begin{remark}
In our application to KAM theorem,  we can take as the reference splitting the
one in the first iterative step. As we will show,
if the error in the first
approximation is small enough, all the splittings remain in this neighborhood.
This smallness condition in the first approximation will
be one of the conditions that appear in the final
smallness conditions of our main result.
\end{remark}

\begin{lemma}\label{lem:closing}
Assume that we have fixed an analytic reference 3-splitting $E_0$
defined on $\torus^d_\rho$. Assume that this
splitting is approximately hyperbolic and
$\eta_0$ approximately invariant with $\eta_0$ sufficiently small.

We denote by $\U$ a
sufficiently small neighborhood of this splitting, so
that all the splittings can be parameterized as graphs of
linear
maps $A^\sigma_\th$ as in \eqref{graph}
with $\|A^\sigma_\th\|_\rho  < M_1$ for some $M_1>0$.

Let $\gamma$ be an analytic cocycle over a rotation defined on
$\torus^d_\rho$.
Assume that $\| \gamma \|_\rho  < M_2$ for some $M_2>0$.

Let $E$ be an analytic 3-splitting  in the neighborhood $\U$.

Assume that $E$ is $\eta$ approximately invariant under $\gamma$
and that $\gamma$ is approximately hyperbolic for
the reference splitting in the sense of Definition~\ref{approximatehyperbolic}.

Assume that $\eta$ is sufficiently small (depending only on the
neighborhood $\U$ and $M_2$).

Then, there is a locally unique splitting $\widetilde E$ invariant under
$\gamma$ close to $E$ in the sense that
$$
\dist_\rho( E, \widetilde E) \le C \eta
$$
for some constant $C>0$.

The splitting $\widetilde E$ satisfies a trichotomy in the sense
of Definition~\ref{def:trichotomy}.

The constant $C$ can be chosen uniformly
depending only on $M_1$, $M_2$.
\end{lemma}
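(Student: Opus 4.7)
The plan is to reduce the problem to a fixed-point equation for the graph parameterization \eqref{graph} and to invert the associated linearized operator using the spectral gap given by the approximate trichotomy. Concretely, suppose the sought splitting $\widetilde E$ is parameterized in terms of the reference splitting $E_0$ by linear maps $\widetilde A^\sigma_\theta : E^\sigma_{0,\theta} \to E^{\hat\sigma}_{0,\theta}$. Since $E$ is already close to $E_0$ (being in $\U$), I can equivalently parameterize $\widetilde E$ as a graph $B^\sigma_\theta : E^\sigma_\theta \to E^{\hat\sigma}_\theta$ over $E$ itself, which is convenient because $E$ is the approximately invariant splitting we are correcting. With respect to the block decomposition
\[
\gamma(\theta) = \begin{pmatrix} \gamma^{\sigma,\sigma}_\theta & \gamma^{\sigma,\hat\sigma}_\theta \\ \gamma^{\hat\sigma,\sigma}_\theta & \gamma^{\hat\sigma,\hat\sigma}_\theta \end{pmatrix}
\]
induced by $E^\sigma_\theta \oplus E^{\hat\sigma}_\theta$, the condition $\gamma(\theta)\widetilde E^\sigma_\theta = \widetilde E^\sigma_{\theta+\omega}$ becomes the graph transform equation
\[
B^\sigma_{\theta+\omega}\bigl(\gamma^{\sigma,\sigma}_\theta + \gamma^{\sigma,\hat\sigma}_\theta B^\sigma_\theta\bigr) = \gamma^{\hat\sigma,\sigma}_\theta + \gamma^{\hat\sigma,\hat\sigma}_\theta B^\sigma_\theta,
\]
and the hypothesis \eqref{approximatelyinvariant} forces the off-diagonal blocks to satisfy $\|\gamma^{\hat\sigma,\sigma}_\theta\|_\rho\le\eta$.

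Next I would rearrange this as $B^\sigma = \mathcal N^\sigma(B^\sigma)$ where $\mathcal N^\sigma$ is, up to terms that are quadratic or higher in $B^\sigma$, the inversion of the linear operator
\[
\mathcal L^\sigma B^\sigma \;=\; B^\sigma_{\theta+\omega}\,\gamma^{\sigma,\sigma}_\theta \;-\; \gamma^{\hat\sigma,\hat\sigma}_\theta\,B^\sigma_\theta.
\]
The key analytic fact is that $\mathcal L^\sigma$ is boundedly invertible on $\A_\rho$: for $\sigma=s$ one iterates forward using $\|\gamma^{s,s}\|$ small and $\|(\gamma^{\hat s,\hat s})^{-1}\|$ bounded and obtains a Neumann-type series of geometric ratio $\le \lambda_-/\lambda_c^-<1$; for $\sigma=u$ one iterates backward with ratio $\le \lambda_c^+/\lambda_+<1$; for $\sigma=c$ one splits $E^{\hat c}=E^s\oplus E^u$ and treats the two summands separately, giving a two-sided telescoping series that converges because $\lambda_-<\lambda_c^-\le\lambda_c^+<\lambda_+$. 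These are precisely the spectral-gap bounds produced by approximate hyperbolicity, and no small divisors intervene since we never divide by quantities related to $\omega$. Given $(\mathcal L^\sigma)^{-1}$, the equation $B^\sigma = (\mathcal L^\sigma)^{-1}\bigl[\gamma^{\hat\sigma,\sigma} + (\text{quadratic in }B^\sigma)\bigr]$ is a contraction on a small ball $\{\|B^\sigma\|_\rho\le C'\eta\}\subset\A_\rho$ once $\eta$ is small enough, so the Banach fixed-point theorem yields a locally unique analytic solution $B^\sigma$ with $\|B^\sigma\|_\rho\le C\eta$, which by the equivalence of distances discussed in Section~\ref{sec:distancesplitting} gives $\dist_\rho(E,\widetilde E)\le C\eta$.

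The remaining point is to verify the trichotomy for $\widetilde E$. The new cocycle blocks $\widetilde\gamma^{\sigma,\sigma}_\theta = \Pi^\sigma_{\theta+\omega,\widetilde E}\gamma(\theta)\Pi^\sigma_{\theta,\widetilde E}$ differ from $\gamma^{\sigma,\sigma}_\theta$ by $O(\eta)$ in $\|\cdot\|_\rho$ (explicitly, they can be read off from the block representation once the change of basis by $\mathrm{Id}+B^\sigma$ has been performed). By standard persistence of hyperbolicity for cocycles under small perturbations in the sup norm (as in \cite{SackerS74,Coppel78}), one obtains slightly deteriorated rates $\widetilde\lambda_-,\widetilde\lambda_c^\pm,\widetilde\lambda_+$ and a constant $\widetilde C_0$, still satisfying \eqref{growthrates}, provided $\eta$ is sufficiently small. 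The main obstacle I expect is purely bookkeeping: making the quantitative bounds on the contraction constant of $\mathcal N^\sigma$ and on the deterioration of the hyperbolicity rates uniform on the neighborhood $\U$, so that the final constant $C$ depends only on $M_1$ and $M_2$. This will force us to choose the threshold for $\eta$ explicitly in terms of the spectral gaps $\lambda_c^-/\lambda_-$, $\lambda_+/\lambda_c^+$, the bound $M_2$ on $\gamma$, and the diameter $M_1$ of $\U$; these are the estimates deferred to Appendix~\ref{app:closing}.
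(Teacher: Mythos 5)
Your proof is correct and follows essentially the same strategy as the paper: parameterize the unknown splitting by graph maps, write the graph-transform invariance condition, invert the linearized operator via a Neumann-type iteration whose contraction is furnished by the spectral gap of the trichotomy (no small divisors, hence no domain loss), and close with the Banach fixed-point theorem; the deterioration of the rates for $\widetilde E$ is then controlled by exactly the kind of Duhamel-type perturbation bounds the paper records in Lemmas~\ref{general} and~\ref{precise}. The only organizational difference is that the paper treats the trichotomy as the pair of dichotomies $(E^s, E^{\hat s})$ and $(E^{\hat u}, E^u)$, solving four one-sided fixed-point equations \eqref{invariance2-s}--\eqref{invariance2-u} and recovering $\widetilde E^c = \widetilde E^{\hat s}\cap\widetilde E^{\hat u}$, whereas you parameterize $\widetilde E^c$ directly and decompose $E^{\hat c}=E^s\oplus E^u$ to get a forward piece and a backward piece; this is equivalent, though you should note that your two pieces $B^{cs}, B^{cu}$ are weakly coupled through the $O(\eta)$ off-diagonal blocks of $\gamma^{\hat c,\hat c}$, so the contraction is on the joint system rather than on each piece separately.
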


We stress  that Lemma~\ref{lem:closing} does not require
any non-resonance condition on the frequency $\omega$,
but it is quite important that the dynamics in the base is
a rotation.

Note that  there is no domain loss. The
changes required are in  same spaces
where we assumed the error. This is because
the proof of Lemma~\ref{lem:closing} will be just
an application of the contraction mapping theorem, so it does
not incur in any loss of domain. If the dynamics on the base of
the cocycle was not a rotation, we could not be using analytic
regularity.

The proof of Lemma~\ref{lem:closing} is given in Appendix~\ref{app:closing}.
It is based on formulating a functional equation for the quantities $A^\sigma$
given in \equ{graph}. For future use, it is convenient to mention now that
after some manipulation one is led to solve the following equations
for the dichotomy between $s, \hat s$ spaces:
\begin{equation} \label{invariance2-s}
\begin{split}
& (\gamma_\th^{\hat \st, \hat \st } )^{-1} \left[ A^\st_{\th + \omega}  \big(
\gamma^{\st, \st}_\th  +
\gamma^{\st, \hat \st}_\th  A^\st_\th \big)-
\gamma^{\hat \st, \st}_\th \right]  = A^\st_\th\ ,\\
&\left[ \gamma^{\st, \hat \st}_{\th - \omega} +
 \gamma^{\st, \st}_{\th-\omega}A^{\hat \st}_{\th -
\omega}
- A^{\hat \st}_\th \gamma_{\th -\omega}^{\hat s, s} A^{\hat \st}_{\th -\omega}\right]
 \left( \gamma^{\hat \st, \hat \st}_{\th -\omega}\right)^{-1}
= A^{\hat \st}_\th
\end{split}
\end{equation}
and the following equations for the dichotomy between $\un, \hat \un$ spaces:
\begin{equation} \label{invariance2-u}
\begin{split}
& \left[-A^\un_{\th} \gamma^{\un, \hat \un}_{\th -\omega}A^\un_{\th -\omega}
+ \gamma^{\hat u, u}_{\th - \omega}
+ \gamma^{\hat u, \hat u}_{\th - \omega} A^\un_{\th - \omega} \right]
(\gamma^{\un,\un}_{\th - \omega})^{-1} =
 A^\un_{\th}  \\
& (\gamma_\th^{\un, \un } )^{-1} \left[ A^{\hat \un}_{\th + \omega}  \big(
\gamma^{\hat \un, \hat \un}_\th  +
\gamma^{\hat \un, \un}_\th  A^{\hat \un}_\th \big)-
\gamma^{\un, \hat \un}_\th \right]  = A^{\hat \un}_\th\ .
\end{split}
\end{equation}

\subsubsection{Estimating the change of hyperbolicity properties in terms of the change in
  the cocycles}

In Lemma~\ref{lem:closing} we showed that given an approximately invariant splitting satisfying
the hyperbolicity conditions in Definition~\ref{approximatehyperbolic} there is a truly
invariant splitting; we estimated the distance between the approximately invariant
splitting and the truly invariant one and the diagonal blocks of the cocycle.

The goal of this Section is to obtain estimates on the hyperbolicity properties
of the new cocycles in terms of the size of the changes. Estimates are, of course,
not unique and we find it interesting to provide several versions which may be
useful in different circumstances. In our case, hyperbolicity properties involve
rates and constants and there are trade--offs among them. Note that in contrast to
\cite{SackerS74} we do not study optimal rates, but only bounds on the rates.

In Lemma~\ref{general} we provide simple, but generally applicable, estimates and
in Lemma~\ref{precise} we provide more quantitative estimates. We will use
Lemma~\ref{general} to justify the repeated application of our Newton step and to
obtain estimates of the change of the splitting and the diagonal blocks of the
whole process. Then, we can apply Lemma~\ref{precise} to obtain information
on the change experienced in the whole Newton process.

We now need to introduce the following notation. Given
$\gamma$ as in \eqref{gammaform}, we denote $\gamma_j (\theta) = \gamma(\theta +
j\omega)$ so that
\beq{iteration}
\Gamma_m^k \Gamma_j^m(\theta) = \Gamma_j^k(\theta)\ ,
\eeq
where $\Gamma_0^k=\Gamma^k$ and $\Gamma_j^m(\theta) = \Gamma_0^{m-j}\circ T_{j
\omega}(\theta)$.

We will assume that for some $C_0, \xi \in \real_+$
\[
\| \Gamma^m\|_\rho \leq C_0 \xi^m\ .
\]
Note that multiplying  the generator of
the cocycle by a constant number (hence the $\Gamma$ is
multiplied by an exponential in $n$),
we can arrange that the quantity $\xi$ which measures the growth of
the cocycle satisfies $\xi < 1$. The multiplication by a constant
does not change the invariant spaces and only changes the smallness
conditions by a constant.

We want to investigate conditions on $\| \gamma - \tilde \gamma\|_\rho $ so that
we can ensure that $\widetilde \Gamma$ obtained by iterating as in \eqref{iteration} satisfies
\eqref{goodestimates} for other constants $\tilde C_0$, $\tilde \xi$.

A general estimate on changes of hyperbolicity properties is given
by the following result, whose proof is given in
Appendix~\ref{app:closing}.

\begin{lemma}
  \label{general}
  Assume that $\Gamma$ obtained from a cocycle $\gamma$ satisfies
\eqref{goodestimates}. Let $\eps^*>0$
  and let $\tilde\gamma$ be a cocycle such that
  \[\| \gamma - \tilde \gamma\|_\rho \leq \eps^*\ ;\]
  then, there exist $\tilde C_0$ and $\txi$ such that
  $\|\widetilde \Gamma^m\|_\rho \leq \tilde C_0 \txi^m$.
\end{lemma}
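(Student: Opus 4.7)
The plan is to expand $\tGamma^m$ as a multilinear sum in $\gamma$ and the perturbation $\delta := \tgamma - \gamma$, and then absorb each maximal run of consecutive $\gamma$-factors using the cocycle bound $\|\Gamma^\ell\|_\rho \le C_0\xi^\ell$. This is the right way to keep $\txi$ close to $\xi$ when $\eps^*$ is small, rather than the crude bound $\|\tGamma^m\|_\rho \le (\|\gamma\|_\rho + \eps^*)^m$ one gets by ignoring the cocycle structure.

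First I would write $\tgamma = \gamma + \delta$ with $\|\delta\|_\rho \le \eps^*$ and expand
\[
\tGamma^m \;=\; \prod_{j=m-1}^{0}(\gamma_j + \delta_j) \;=\; \sum_{S \subseteq \{0,\ldots,m-1\}} P_S,
\]
where $P_S$ is the ordered product whose factor at position $j$ is $\gamma_j$ when $j\notin S$ and $\delta_j$ when $j \in S$. For a subset $S$ of size $k$ with ordered elements $j_1<\cdots<j_k$, the term $P_S$ is an alternating composition $A_{k+1}\,\delta_{j_k}\,A_k\,\cdots\,\delta_{j_1}\,A_1$ in which each $A_i$ is a (possibly empty) block of consecutive $\gamma_r$'s. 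The key observation is that such a nonempty block $A_i = \gamma_{r_i+\ell_i-1}\cdots\gamma_{r_i}$ equals $\Gamma^{\ell_i}\circ T_{r_i\omega}$ by the cocycle identity \equ{iteration}, so $\|A_i\|_\rho \le C_0\xi^{\ell_i}$ because $T_{r_i\omega}$ is an isometry of $\A_\rho$. Assuming without loss of generality $C_0 \ge 1$ (so the bound is also valid for empty blocks), each $P_S$ with $|S|=k$ satisfies $\|P_S\|_\rho \le C_0^{k+1}\xi^{m-k}(\eps^*)^k$, since the total length of $\gamma$-blocks is $m-k$ and there are at most $k+1$ of them.

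Summing over the $\binom{m}{k}$ subsets of size $k$ and then over $k$, the binomial theorem yields
\[
\|\tGamma^m\|_\rho \;\le\; \sum_{k=0}^m \binom{m}{k}\, C_0^{k+1} (\eps^*)^k \xi^{m-k} \;=\; C_0\bigl(\xi + C_0\eps^*\bigr)^m,
\]
so one may take $\tilde C_0 := C_0$ and $\txi := \xi + C_0\eps^*$. The only step that needs any care is the combinatorial bookkeeping of consecutive-$\gamma$ runs which produces the factor $C_0^{k+1}$; once it is split as $C_0\cdot C_0^k$ and the $C_0^k$ is absorbed into $(C_0\eps^*)^k$ inside the binomial sum the rest is automatic. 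Note that this argument uses only the cocycle identity and the translation-invariance of $\|\cdot\|_\rho$: no Diophantine or hyperbolicity hypothesis enters, which is as one would expect since the conclusion is just a norm estimate on the iterates.
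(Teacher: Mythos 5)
Your argument is correct, but it takes a genuinely different route from the paper's own proof of Lemma~\ref{general}. The paper argues softly: choose $N$ large enough that $\|\Gamma^N\|_\rho \le 1/4$, use continuity in $\gamma$ to get $\|\widetilde\Gamma^N\|_\rho \le 1/2$ for $\eps^*$ small, and then iterate the cocycle relation to get geometric decay, obtaining $\txi = (1/2)^{1/N}$ and $\tilde C_0 = 2\sup_{0<\ell\le N}\|\widetilde\Gamma^\ell\|_\rho$ with no explicit relation between $\txi$ and $\xi$. You instead expand $\widetilde\Gamma^m$ as a sum over subsets marking which factors are perturbations $\delta = \tgamma - \gamma$, absorb each maximal run of consecutive $\gamma$-factors into the cocycle bound $\|\Gamma^\ell\|_\rho \le C_0\xi^\ell$ (using the cocycle identity and the fact that composition with $T_{r\omega}$ preserves the $\rho$-norm), and obtain the closed form $\|\widetilde\Gamma^m\|_\rho \le C_0(\xi + C_0\eps^*)^m$. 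This buys you more: it holds for every $\eps^* > 0$ without any smallness requirement, and it produces exactly the quantitative relation $\txi = \xi + C_0\eps^*$ that the paper only establishes later, via a Dyson-expansion/fixed-point argument, in Lemma~\ref{precise} --- and with the cleaner constant $\tilde C_0 = C_0$ rather than the more complicated bound \eqref{C0main}. In effect your single combinatorial computation subsumes Lemma~\ref{general} and the $\txi$-part of Lemma~\ref{precise}. One minor remark: the reduction to $C_0 \ge 1$ (needed so that empty $\gamma$-runs also obey the estimate) is harmless, since $C_0 \ge \|\Gamma^0\|_\rho = 1$ is already forced if the bound is meant to hold at $m=0$, and enlarging $C_0$ only weakens the hypothesis.
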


A more quantitative estimate on changes of hyperbolicity
properties is given by the following result, whose proof is given
in Appendix~\ref{app:closing}.

\begin{lemma}
  \label{precise}
  With the notations of Lemma~\ref{general}, let $a\equiv\|\gamma-\tilde\gamma\|_\rho$ be
  small enough, say $ a \leq {1 \over 4 C_0} $. Then, we can take
  $\txi = \xi + C a$ with an explicit constant $C$.

Furthermore, the quantity $\tilde C_0 = \tilde C_0 (C_0, \mu,a)$ in Lemma~\ref{general} can be bounded as
\beq{C0main}
\tilde C_0\leq 4 C_0^2 a \xi^{-1} \frac{\txi}{\txi - \xi}\ .
\eeq

\end{lemma}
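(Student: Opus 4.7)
The plan is to expand $\tGamma^m$ around $\Gamma^m$ via a telescoping identity and then close the estimate by a discrete Gronwall bootstrap. Writing $\tgamma_k = \gamma_k + \delta_k$ with $\|\delta_k\|_\rho \le a$, one checks inductively (starting from $\tGamma^m = \tGamma_1^m \gamma_0 + \tGamma_1^m \delta_0$, rewriting the first term as $\Gamma^m + (\tGamma_1^m - \Gamma_1^m)\gamma_0$, and iterating the same decomposition on $\tGamma_1^m - \Gamma_1^m$) the identity
\[
\tGamma^m - \Gamma^m \;=\; \sum_{k=0}^{m-1} \tGamma_{k+1}^m \,\delta_k\, \Gamma_0^k\ .
\]
Because $\gamma$ is a cocycle over the rigid rotation $T_\omega$, translations on the base preserve the $\A_\rho$-norm, so the hypothesis $\|\Gamma^m\|_\rho \le C_0 \xi^m$ of Lemma~\ref{general} propagates to $\|\Gamma_j^{j+k}\|_\rho \le C_0 \xi^k$ uniformly in $j$.

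I would then run an induction on $m$ for the target bound $t_m := \|\tGamma^m\|_\rho \le \tilde C_0\,\txi^m$, with $\txi$ and $\tilde C_0$ still to be fixed. Substituting the induction hypothesis $\|\tGamma_{k+1}^m\|_\rho \le \tilde C_0\,\txi^{m-k-1}$ together with $\|\Gamma_0^k\|_\rho \le C_0 \xi^k$ into the telescoping identity and summing a geometric series yields
\[
\|\tGamma^m - \Gamma^m\|_\rho \;\le\; a\,C_0\,\tilde C_0\,\txi^{m-1}\sum_{k=0}^{m-1}(\xi/\txi)^k \;\le\; \frac{a\,C_0\,\tilde C_0\,\txi^m}{\txi-\xi}\ ,
\]
so that $\|\tGamma^m\|_\rho \le C_0\xi^m + a\,C_0\,\tilde C_0\,\txi^m/(\txi-\xi)$. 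Demanding this be at most $\tilde C_0\,\txi^m$ gives the self-consistency condition
\[
\tilde C_0 \Bigl[1 - \frac{a\,C_0}{\txi - \xi}\Bigr] \;\ge\; C_0\,(\xi/\txi)^m\ .
\]
Choosing $\txi = \xi + Ca$ with $C = 2C_0$ makes $aC_0/(\txi-\xi) = 1/2$, so the bracketed factor is at least $1/2$; since $(\xi/\txi)^m \le 1$, the inequality closes with $\tilde C_0 = 2C_0\,\txi/\xi$, which (using $\txi-\xi = 2C_0 a$) can be rewritten in the announced form $\tilde C_0 \le 4\,C_0^2\,a\,\xi^{-1}\,\txi/(\txi-\xi)$. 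A cruder route, via the substitution $s_m = t_m/\xi^m$ and the standard discrete Gronwall lemma $s_m \le C_0 (1 + aC_0/\xi)^m$, recovers the same exponent $\txi = \xi + C_0 a$ but yields a less sharp constant.

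The main obstacle is the self-referential nature of the estimate: to control $\|\tGamma^m\|_\rho$ one needs control on $\|\tGamma_{k+1}^m\|_\rho$, which is of exactly the same type. The induction on $m$ breaks this loop, since each step only invokes the target bound at strictly smaller indices; the base case $m=1$ gives $\|\tgamma\|_\rho \le C_0\xi + a$, which is comfortably below $\tilde C_0\,\txi$ for the constants above. A secondary subtlety is the requirement that $a$ stay strictly below the threshold $1/(4C_0)$, which is exactly what keeps the bootstrap denominator $1 - aC_0/(\txi-\xi)$ bounded away from zero and fixes the factor $4$ appearing in the statement. Once the bound on $\tilde C_0$ is in hand, it is uniform in $m$, so it gives the advertised exponential bound $\|\tGamma^m\|_\rho \le \tilde C_0\,\txi^m$ for all $m$.
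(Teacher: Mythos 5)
Your proof is correct, and it takes a genuinely different route from the paper. The paper writes the perturbed cocycle as the fixed point of an affine operator $\LL$ in a weighted sequence space (norm $\|\tGamma\|_{\txi} = \sup_k \txi^{-k}\|\tGamma^k\|_\rho$), using the discrete Duhamel identity $\tGamma^k = \Gamma^k + \sum_{j<k}\Gamma^k_{j+1}\,[\tgamma_j - \gamma_j]\,\tGamma^j$ — the \emph{known} cocycle $\Gamma$ sits on the outside and the unknown $\tGamma$ on the inside — and then shows $\LL$ is a contraction (indeed $\|\LL\| = C_0 a / (\txi-\xi)$). You instead use the transposed decomposition $\tGamma^m - \Gamma^m = \sum_k \tGamma^m_{k+1}\,\delta_k\,\Gamma^k_0$, with the unknown on the outside and the known on the inside, and close the estimate by a direct induction on $m$ using the self-consistency condition. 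Both arguments rest on the same telescoping identity; yours trades the Banach fixed-point machinery for an elementary bootstrap, and as a bonus it makes completely explicit why $\txi = \xi + Ca$ (rather than a $\txi$ uniformly bounded away from $\xi$, as the paper's auxiliary condition $\xi/\txi < 1/2$ might suggest) is enough. One small point: your aside about the discrete Gronwall variant giving a ``less sharp constant'' has the comparison backwards — carrying it through gives $t_m \le C_0(\xi + C_0 a)^m$, i.e.\ $\tilde C_0 = C_0$ and $\txi = \xi + C_0 a$, both of which are tighter than what your main route (or the paper's) produces; it merely does not land on the particular algebraic form $4C_0^2 a\,\xi^{-1}\txi/(\txi-\xi)$ announced in the statement. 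Also worth flagging implicitly: your base case $m=1$ requires $C_0\xi + a \le \tilde C_0\txi$, which closes comfortably because $C_0 \ge 1$ (forced by $\|\Gamma^0\| = 1 \le C_0$); it would be cleaner to say so.
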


An important corollary of Lemma~\ref{lem:closing} is the following.

\begin{cor}
If the reference splitting is approximately hyperbolic
in the sense of Definition~\ref{approximatehyperbolic}
for some cocycle $\gamma^0_\th$, then for
all the $\gamma$'s in a neighborhood of $\gamma^0_\th$, there is
an (locally unique)
 invariant splitting which is hyperbolic and the hyperbolicity
constants can be chosen uniformly.
\end{cor}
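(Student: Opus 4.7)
The plan is to derive the corollary as an essentially immediate consequence of Lemma~\ref{lem:closing} combined with the perturbation bounds of Lemma~\ref{general} (and, for sharper constants, Lemma~\ref{precise}). The conceptual point is that both ingredients of the hypothesis of Lemma~\ref{lem:closing}, namely approximate invariance of a splitting and approximate hyperbolicity of a cocycle with respect to it, are \emph{open} conditions in the analytic norm on cocycles.

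First, I would fix the reference splitting $E_0$ and denote by $\eta_0$ the small constant for which $E_0$ is approximately invariant under $\gamma^0$, together with the trichotomy rates and constant $C_0$ characterizing the diagonal blocks $(\gamma^0)_\th^{\sigma,\sigma}$. For any $\gamma$ with $\|\gamma - \gamma^0\|_\rho \le \eps^\ast$, the decomposition \eqref{sigmasigma} combined with uniform boundedness of the reference projections $\Pi^\sigma$ gives
\[
\|\gamma_\th^{\sigma,\sigma'} - (\gamma^0)_\th^{\sigma,\sigma'}\|_\rho \le C\,\eps^\ast
\]
for all $\sigma,\sigma'$. In particular, the off-diagonal blocks of $\gamma$ relative to $E_0$ have norm at most $\eta_0 + C\eps^\ast$, so $E_0$ is $(\eta_0 + C\eps^\ast)$-approximately invariant under $\gamma$ in the sense of \eqref{approximatelyinvariant}.

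Next, to verify that $\gamma$ is approximately hyperbolic for $E_0$, I would apply Lemma~\ref{general} separately to each diagonal block: since $\|\gamma^{\sigma,\sigma} - (\gamma^0)^{\sigma,\sigma}\|_\rho \le C\eps^\ast$ and the iterates of $(\gamma^0)^{\sigma,\sigma}$ satisfy the rate bounds prescribed by Definition~\ref{def:trichotomy}, the iterates of $\gamma^{\sigma,\sigma}$ satisfy analogous bounds with rates $\widetilde\lambda_{-},\widetilde\lambda_{c}^{\pm},\widetilde\lambda_{+}$ and a constant $\widetilde C_0$ obtainable from \eqref{C0main}. Choosing $\eps^\ast$ small enough that these perturbed rates still satisfy $\widetilde\lambda_{-}<\widetilde\lambda_c^-\le\widetilde\lambda_c^+<\widetilde\lambda_+$ and $\widetilde\lambda_-<1<\widetilde\lambda_+$, the block-diagonal cocycle $\tilde\gamma_\th$ of Definition~\ref{approximatehyperbolic} satisfies the required trichotomy, so $\gamma$ is approximately hyperbolic with respect to $E_0$.

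With both hypotheses of Lemma~\ref{lem:closing} verified, I would apply it directly to $\gamma$ and $E_0$ to produce a locally unique splitting $\widetilde E$ which is truly invariant under $\gamma$, satisfies
\[
\dist_\rho(E_0,\widetilde E) \le C\,(\eta_0 + C\eps^\ast),
\]
and enjoys an exponential trichotomy in the sense of Definition~\ref{def:trichotomy}. Uniformity of the hyperbolicity constants is then automatic: the constant $C$ in Lemma~\ref{lem:closing} depends only on the neighborhood $\U$ of splittings (through $M_1$) and on a uniform bound $M_2$ on $\|\gamma\|_\rho$, both of which are fixed once a neighborhood of $\gamma^0$ is chosen; the rates provided by Lemma~\ref{precise} deteriorate by at most $Ca = C\|\gamma - \gamma^0\|_\rho$, so they can be bounded uniformly on any sufficiently small neighborhood. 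The only delicate point in this plan is ensuring that the perturbation $\eps^\ast$ is small enough to keep $\eta_0 + C\eps^\ast$ below the smallness threshold demanded by Lemma~\ref{lem:closing}, and simultaneously to preserve strict ordering of the perturbed rates; both are finite, explicit constraints on $\eps^\ast$ and present no real obstacle.
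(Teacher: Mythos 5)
Your proof is correct and follows exactly the route the paper intends: the corollary is stated immediately after Lemma~\ref{lem:closing}, Lemma~\ref{general}, and Lemma~\ref{precise} as an evident consequence, and the paper supplies no separate argument. Your verification that both hypotheses of Lemma~\ref{lem:closing} (approximate invariance of $E_0$ under $\gamma$, and approximate hyperbolicity via perturbing the diagonal blocks with Lemma~\ref{general}/\ref{precise}) are open conditions, together with the observation that the constants in Lemma~\ref{lem:closing} depend only on the neighborhood data $M_1,M_2$, is precisely what the corollary is asserting.
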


\begin{remark}
  The estimates on the hyperbolicity constants in
  \eqref{C0main} of Lemma~\ref{precise}
involve choices. One can make $C_0$ change or $\lambda$'s change.

Much of the theory (e.g. \cite{SackerS74})
is concerned with the optimal $\lambda$'s.
Note that, even for constant $2 \times 2$ cocycles, the
optimal $\lambda$ can change with a fractional power of
the perturbation.  Once we choose a slightly less optimal $\lambda$'s
we can make the change to be linear in the perturbation. Of course,
the range of validity, may be smaller if the chosen upper bound is
close to the optimal one.

Note that, the $C_0$ depends on the choice of metrics -- but the rates
$\lambda_* $ do not.  Indeed, it is customary in the theory that deals
with perturbations to observe that we can choose an \emph{``adapted metric''} so
that $C_0 = 1$.  Of course, the size of the perturbations allowed
is measured in this metric and, when $C_0$ increases,  the adapted metric
becomes more inequivalent to the original one and the perturbations allowed
may decrease.

Numerical explorations (\cite{HaroL07, HLlverge})
suggest that if one fixes a metric and studies the optimal rates
$\lambda$'s  and the optimal $C_0$, there is  a very interesting scenario
for the loss of hyperbolicity called
\emph{``bundle collapse''}. In this scenario,
the rates $\lambda$'s remain bounded
and the $C_0$ explodes. This scenario empirically presents remarkable
scaling properties.   The bundle collapse scenario is particularly
important in the breakdown of  KAM tori in conformally symplectic
systems (see \cite{CallejaF11}). The papers \cite{HaroL07, HLlverge}
presented numerical conjectures of the blow-up of the optimal values of
the rates and of the geometric properties of the bundles. These
conjectures were recently proved  in several cases
in \cite{BjerklovS08, Ohlson17, FiguerasT18, Timoudas18}.

We think it would be interesting to study the breakdown
of hyperbolicty in conformally symplectic systems. It seems
possible that the limit of zero dissipation will have some
interactions with the previously studied phenomena.
For these numerical implementations, the a-posteriori results and
the fast algorithms developed here are likely to be useful.
\end{remark}

\subsection{Whiskered tori}

The main result of this paper concerns whiskered tori, which are defined as follows.

\begin{definition}
\label{def:whiskered}
Let $f_\mu$ be a conformally symplectic system with
conformal factor $\lambda$ of a symplectic manifold $\M$.

We say that $K: \torus^d \rightarrow \M$ is a whiskered torus when:
\begin{enumerate}
\item
$K$ is the embedding of a rotational torus, that is,
$f_\mu \circ K = K \circ T_\omega$.
\item
The cocycle $Df_\mu\circ K$ over the rotation $T_\omega$
admits a trichotomy as in  Definition~\ref{def:trichotomy} with the
rates
$\lambda_-, \lambda_c^-, \lambda_c^+, \lambda_+$.
\item
The rates satisfy
$\lambda_c^- \le \lambda \le \lambda_c^+$.
\item
The spaces $E^c$ have dimension $2d$.
\end{enumerate}
\end{definition}

Somewhat surprisingly
there are relations between  the conformal symplectic properties,
the rates and the properties of the bundles. They will be explored in
Section~\ref{sec:geometry}. Notably we will show
that if an embedding satisfies Definition~\ref{def:whiskered}, then
it is isotropic, there are relations between the rates of
growth and, more surprisingly, the $E^c$ bundle is trivial.

\section{Statement of the main result, Theorem~\ref{whiskered}}
\label{sec:statement}

The main result of this paper is an a-posteriori result
about solutions of a parameterized version of \eqref{inv}.

As motivation for the
hypothesis of
Theorem~\ref{whiskered},
 assume that $K$, $\mu$ satisfy approximately \eqref{inv} with a small
error term $e$, i.e.
$$
f_{\mu}\circ K - K\circ T_\omega = e\ .
$$
If we want that $K + \Delta$, $\mu + \beta$ for some
corrections  $\Delta$, $\beta$ is a better solution, the Newton-Kantorovich method would
prescribe to choose $\Delta$, $\beta$ satisfying
\begin{equation}\label{Newtonheuristic}
Df_{\mu} \circ K \Delta - \Delta \circ T_\omega
+ (D_\mu f_{\mu}) \circ K \beta  = -e\ .
\end{equation}

If one tries to solve \eqref{Newtonheuristic} by iterating,
one is quickly led to the cocycles that were discussed in
Section~\ref{sec:cocycle}. Hence, it is clear
that the asymptotic growth of the cocycles plays a role.

As it turns out, the geometry of the problem plays also a very
important role and one of the most surprising facts
is that the conformal symplectic geometry leads to constraints on
the rates of growth.  These interactions of the geometry
with the dynamics will be explored in Section~\ref{sec:geometry}. We
anticipate that the most important results will be a surprising
triviality result for the bundle of vectors with intermediate
slow decay and the \emph{``automatic reducibility''} that
constructs  a natural system of coordinates in which the
linearized equations are very simple.  In this paper we go
beyond the results in previous papers and show in
Lemma~\ref{lem:istrivial} that the center bundle is trivial.

Using the geometry, we will show that the equations
\eqref{Newtonheuristic} can be solved. As mentioned in Section~\ref{sec:intro}, the result is a very efficient
algorithm. Of course, the a-posteriori format of
the theorem gives an analytical support to the results.

Given the important role  played by the geometry, it is clear
that the limit when the geometry changes from conformally symplectic
to symplectic is very singular. In Section~\ref{sec:domain} we
will study this singular limit in which the dissipation becomes weak.
%This limit is of interest in Celestial Mechanics (e.g., systems
%with tidal friction - compare with \cite{CorreiaL2004}, \cite{CorreiaL2009}), since there the dissipations are
%very weak, but the time they act is very long.

The following Theorem~\ref{whiskered}  on the persistence of whiskered tori is
the main result of this paper.
Later, we will use it to obtain information on the analyticity
properties of the tori under dissipative perturbations (see Theorem~\ref{whiskered}).

We will consider specially the case $0<|\lambda|<1$, but Theorem~\ref{whiskered} can be stated as well for
$|\lambda|>1$, just taking the inverse of the mapping. In the discussion of
analyticity properties with respect to perturbations, we will need
to consider even complex values.  We will also consider the case
$\lambda = 1$, but, as pointed out in \cite{CCLdomain}, the case
of complex $\lambda$ with $|\lambda| = 1$, $\lambda \ne 1$
requires special considerations.
Indeed, when $\lambda$ is a root of the identity, we do not
expect that the solutions persist in general. Indeed, for generic perturbations,  it is impossible to find even formal asymptotic expansions.

\begin{theorem}\label{whiskered}
Let $\omega \in \D_d(\nu, \tau)$, $d\leq n$, as in \eqref{DC},
let $\M$ be as in Section~\ref{sec:CS} and let
$f_\mu:\M\rightarrow\M$, $\mu \in \real^d$, be a family of real analytic, conformally symplectic mappings
as in \equ{conformallysympmap} with $0<\lambda<1$. We make the following assumptions.

$(H1)$ Approximate solution:

Let $(K_0,\mu_0)$ with $K_0 :\torus^d \to \M$, $K_0 \in\A_\rho$,
and $\mu_0 \in \real^d$
define an approximate whiskered torus with frequency $\omega$ for $f_{\mu_0}$,
 so that
\begin{equation} \label{approximately-invariant}
\|f_{\mu_0}\circ K_0  - K_0 \circ T_\omega\|_\rho\leq \E
\end{equation}
for some $\E>0$.

To ensure that the composition of
$f_\mu$  and $K$ can be defined, we  will assume that the range of
$K_0$ is well inside the domain of $f_\mu$ for all $\mu$ sufficiently close to $\mu_0$.

We will assume that there is a domain
$\U \subset \complex^n/\integer^n \times \complex^n $ such that
for all $\mu_0$ such that $| \mu -\mu_0| \le \eta$,
$f_\mu$ has domain $\U$. Moreover, we assume that the range of
$K_0$ is inside the domain $\U$:
\begin{equation}
\label{compositions}
\dist( K_0(\torus^d_\rho), \complex^n/\integer^n \times \complex^n
\setminus \U) \ge \eta\ .
\end{equation}

$(H2)$
Approximate splitting:

For all the points in the torus,
there  exists a splitting of the tangent
space of the phase space, depending analytically on the angle
$\th\in\torus^d_\rho$.

These bundles are approximately invariant under the
cocycle $\gamma(\th) = Df_{\mu_0} \circ K_0(\th)$, namely
the quantity in \eqref{approximatenorm} is
smaller than $\E_h$, for some $\E_h>0$.

$(H3)$ Spectral condition for the bundles (exponential trichotomy):

For all  $\th\in\torus_\rho^d$ the spaces in $(H2)$
are approximately hyperbolic for the cocycle
$\gamma(\th)$ (see Definition
~\ref{approximatehyperbolic}). We recall  that this
just entails that the diagonal cocycles have different rates of
growth  and  hyperbolicity constant that satisfy \eqref{growthrates}.

$(H3')$ Since we are dealing with conformally symplectic
systems and are interested in the almost symplectic limit,
we will also assume\footnote{As we will show in Section~\ref{sec:geometry}, the interaction between
conformally symplectic systems and the exponential trichotomy implies
further restrictions which follow from the present assumptions.}\footnote{ Note
that we have used $\lambda$ for the conformal factor and
$\lambda_\sigma$ for the different bounds on  rates. Even if these
are conceptually very different things, we will show that they are
related. This justifies using similar letters.}:
$$
\lambda_- < \lambda \lambda_+< \lambda_c^-\ ,\qquad \lambda_c^-\leq \lambda\leq\lambda_c^+\ .
$$

$(H4)$ We assume that the dimension of the center subspace\footnote{The content of this assumption is that the dimension of
  the center bundle is exactly twice the dimension of the invariant torus.
  As we will show later, the dimension has to be at least twice. }
is $2d$.

$(H5)$ Non--degeneracy:
\footnote{The idea of the condition is that a very explicit $2d\times 2d$ matrix
is invertible. We will formulate it here in detail,
but the main point is that the condition can be verified
with a finite computation on the approximate solution and
the approximate bundles given in $(H1)$, $(H2)$.}

Denote by $J_c$ the operator $J$ restricted to the center
space (we will show in Lemma~\ref{lem:nondegenerate}
that $J_c$ is a non-degenerate matrix).

Let
\beq{N}
N(\th)=(DK(\th)^T DK(\th))^{-1}\ ,
\eeq
$$
P(\th)=DK(\th) N(\th)\ ,
$$
$$
\chi(\th)=DK(\th)^T(J^c)^{-1}\circ K(\th) DK(\th)\ .
$$
Let $M$, $S$ be auxiliary quantities defined as
\beq{M}
M(\th) = [ DK(\th)\ |\  (J^c)^{-1}\circ K(\th)\ DK(\th) N(\th)]
\eeq
and
\beq{torsionW}
S(\th)\equiv P(\th+\omega)^T Df_\mu \circ K(\th) (J^c)^{-1}\circ K(\th)P(\th)
- N(\th+\omega)^T \chi(\th + \omega) N(\th+\omega)\ \lambda\,\Id_d\ .
\eeq

We assume that the following non--degeneracy condition is satisfied, precisely that
the matrix $\S$ defined below is invertible:
\begin{equation}
\label{non-degeneracyW}
\S \equiv
\left(
\begin{array}{cc}
  {\overline S} & {\overline {S(W_b^c)^0}}+\overline{\widetilde A_1^c} \\
  (\lambda-1)\Id_d & \overline{\widetilde A_2^c} \\
 \end{array}%
\right)\ ,\qquad \det\S \ne 0\ ,
\end{equation}
where the bar denotes the average,
$\widetilde A_1^c$, $\widetilde A_2^c$ denote the first $d$ and
the last  $d$ rows of the $2d\times d$ matrix
$\widetilde A^c\equiv[\widetilde A_1^c|\widetilde A_2^c]=M^{-1}\circ T_{\omega} D_{\mu} f_{\mu} \circ K$,
$(W_b^c)^0$ is the solution of $\lambda (W_b^c)^0-(W_b^c)^0\circ T_\omega=-(\widetilde A_2^c)^0$,
where $(\widetilde A_2^c)^0=\widetilde A_2^c-\overline{\widetilde A_2^c}$.

Let $\alpha(\tau)$ be an explicit number
(see the discussion later for the values that come from the proof).
Assume that for some $\delta$, $0< \delta < \rho$, we have
$$
\E_h  \le \E^*_h\ ,\qquad \E \le \delta^{2 \alpha} \E^*\ ,
$$
where
$\E_h^*$, $\E^*$ are
explicit functions given along the proof and depending on the following quantities:
\begin{equation}\label{degeneracylist}
  \begin{split}
&\nu, \tau,
 C_0, \lambda_+, \lambda_-, \lambda_c^+, \lambda_c^-,
 \| \Pi^{s/u/c}_{\th}\|_\rho \\
 &\| DK_0\|_\rho, \| (DK_0^{T} DK_0)^{-1}\|_\rho,
|\S^{-1}|, \max_{ j = 0, 1,2}
\sup_{|\mu - \mu_0 | \le \eta_0} \| D^j f_\mu\|_{\U}\ .
\end{split}
\end{equation}

Then, there exists an exact solution $(K_e,\mu_e)$, such that
$$
f_{\mu_e}\circ K_e-K_e\circ T_\omega=0
$$
with
\beq{Kmu}
\|K_e-K_0\|_{\rho-2\delta}\leq C\E\delta^{-\tau} ,\qquad |\mu_e-\mu_0|\leq C \E\ ,
\eeq
where $C$ is a constant whose explicit expression can be obtained from
the proof and which depends on the same variables as $\E_h^*, \E^*$.

Furthermore, the invariant torus $K_e$ is
hyperbolic in the sense that there exists an invariant
splitting
\[
\T_{K_e(\th)}\M = E_{\th}^s \oplus
 E_{\th}^c \oplus E_{\th}^u\ ,
\]
that satisfies Definition~\ref{def:trichotomy}.

The splitting of the invariant torus is close to
the original one in the sense that, for some constant $C>0$, one has
\beq{proj}
\| \Pi^{s/u/c}_0 -
\Pi^{s/u/c}_f   \|_{\rho - 2 \delta} \le C (\E\delta^{-\tau} + \E_h)
\eeq
(as remarked above, this is equivalent to
the analytic Grassmanian distance).

Moreover, the hyperbolicity constants corresponding to the invariant splitting
of the invariant torus (which we denote by a tilde)
can be taken to be close to those of the approximately
invariant splitting of the approximate invariant torus
assumed to exist in (H1), (H2):
\beqa{lamest}
| \lambda_\pm - \widetilde\lambda_\pm| &\le&
C ( \E\delta^{-\tau}  + \E_h),\nonumber\\
| \lambda^\pm_c  - \widetilde\lambda^{\pm}_c | &\le&
C ( \E \delta^{-\tau}  + \E_h)\ .
\eeqa
\end{theorem}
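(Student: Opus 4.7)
The plan is to solve the invariance equation $f_\mu \circ K - K\circ T_\omega = 0$ jointly for the unknowns $(K,\mu)$ by a quasi-Newton scheme that exploits the conformally symplectic structure. Starting from $(K_0,\mu_0)$ with error $e_0 \equiv f_{\mu_0}\circ K_0 - K_0\circ T_\omega$ of size $\E$, we seek a correction $(\Delta,\beta)$ whose linearization is governed by
\[
Df_{\mu_0}\circ K_0\cdot\Delta \;-\;\Delta\circ T_\omega\;+\;D_\mu f_{\mu_0}\circ K_0\cdot\beta \;=\; -e_0.
\]
The approximate trichotomy of $(H2)$--$(H3)$ allows us to project this equation onto the three subbundles $E^s,E^c,E^u$ and solve it in three essentially independent blocks. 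For the hyperbolic blocks, the approximate invariance in the sense of Definition~\ref{approximatehyperbolic} together with Lemma~\ref{lem:closing} produces a nearby truly invariant splitting $\widetilde E^s\oplus\widetilde E^c\oplus\widetilde E^u$ and bounded diagonal cocycles $\gamma^{\sigma,\sigma}$; the telescoping Neumann series (forward in time for $E^s$, backward for $E^u$) then yields $\Delta^s,\Delta^u$ without any loss of analyticity domain and with bounds controlled by the hyperbolicity constants and the projection norms from the list \eqref{degeneracylist}.

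The center block is the delicate part, and this is where the geometry introduced in Section~\ref{sec:geometry} enters decisively. First I would invoke automatic reducibility (Section~\ref{sec:automatic}): the $2d$-dimensional center bundle $E^c$, by $(H4)$ and the triviality result (Lemma~\ref{lem:istrivial}), admits the globally defined analytic frame $M(\theta)=[DK(\theta)\,|\,(J^c)^{-1}\circ K(\theta)\,DK(\theta)N(\theta)]$ constructed from $DK$ and $J$ as in \equ{M}, with $N$ as in \equ{N}. A direct computation using the conformal symplectic identity \eqref{conformallysym} shows that in the $M$-frame the derivative cocycle $Df_\mu\circ K$ becomes, up to terms of order $\E$,
\[
M^{-1}\circ T_\omega\,Df_\mu\circ K\,M \;=\; \begin{pmatrix} \mathrm{Id}_d & S(\theta) \\ 0 & \lambda\,\mathrm{Id}_d\end{pmatrix},
\]
with $S$ given by \equ{torsionW}. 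The center Newton equation therefore reduces to two cohomology equations over the rotation $T_\omega$: one with multiplier $1$ and one with multiplier $\lambda$. Writing the unknown center correction as $M(\theta)W^c(\theta)$ and the drift contribution in the $M$-frame as $\widetilde A^c = M^{-1}\circ T_\omega\,D_\mu f_\mu\circ K$, these equations take the form $W_b^c\circ T_\omega - \lambda W_b^c = -\eta^c_b - \widetilde A_2^c\beta$ and $W_a^c\circ T_\omega - W_a^c = -\eta^c_a - S\,W_b^c - \widetilde A_1^c\beta$, which are solved in Fourier modes using the Diophantine hypotheses from Definition~\ref{def:DC} (with loss $\delta^{-\tau}$ on analyticity strips). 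The average obstructions of both cohomology equations are encoded in the $2d\times 2d$ matrix $\S$ of \equ{non-degeneracyW}; the non-degeneracy $\det\S\neq 0$ of $(H5)$ is precisely what permits us to choose $\beta\in\real^d$ so that both averaged obstructions vanish simultaneously.

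The hardest and most delicate step will be this center equation, not individually but in combination with ensuring that the Newton iteration closes. One has to verify that, after correction, the new error is quadratic in the old error (modulo Diophantine divisor losses of the form $\delta^{-\alpha}$), and that the geometric hypotheses propagate along the iteration. Concretely, I would check that $M(\theta)$, $N(\theta)$, $S(\theta)$ and the average matrix $\S$ depend smoothly on $(K,\mu)$ so their perturbation is of order $\|\Delta\|+|\beta|\le C\E\delta^{-\tau}$, that the splitting can be updated at each step by Lemma~\ref{lem:closing} (with hyperbolicity rates controlled by Lemmas~\ref{general} and \ref{precise}), and that the geometric identities $DK^T J DK = 0$ (isotropy) and $\lambda_-<\lambda\lambda_+<\lambda_c^-$ of $(H3')$ persist. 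The constraint $\lambda_c^-\le\lambda\le\lambda_c^+$ is what makes the center block have exactly the two multipliers $1$ and $\lambda$ in the reducible form, justifying the use of the two cohomology equations.

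Finally, a standard KAM convergence scheme on nested domains $\rho_n = \rho_0 - \sum_{k<n}\delta_k$ with $\delta_n = \delta\cdot 2^{-n}$ yields the quadratic estimate $\|e_{n+1}\|_{\rho_{n+1}} \le C\,\delta_n^{-2\tau}\,\|e_n\|_{\rho_n}^2$, and the smallness assumption $\E\le \delta^{2\alpha}\E^*$ with an appropriate $\alpha(\tau)$ guarantees superexponential decay of the errors. Summing the telescoping bounds gives the final estimates \equ{Kmu}, \equ{proj} and \equ{lamest}: the parameter $\mu$ moves by $O(\E)$, the embedding by $O(\E\delta^{-\tau})$ on the shrunk strip $\rho-2\delta$, the projections (hence the splitting distance in the Grassmannian sense of Section~\ref{sec:distancesplitting}) by $O(\E\delta^{-\tau}+\E_h)$, and the hyperbolicity rates by the same order via Lemma~\ref{precise} applied to the accumulated change in the cocycle.
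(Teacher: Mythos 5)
Your proposal is correct and follows essentially the same route as the paper: project the linearized invariance equation onto $E^s,E^c,E^u$ after using Lemma~\ref{lem:closing} to replace the approximately invariant splitting by a truly invariant one; solve the hyperbolic blocks by Neumann series, solve the center block via automatic reducibility in the $M$-frame (reducing to two cohomology equations over $T_\omega$ whose averaged obstructions are governed by $\S$ in \eqref{non-degeneracyW}); and close the loop with a standard nested-domain KAM iteration together with Lemmas~\ref{general}--\ref{precise} for the propagation of hyperbolicity constants. The only minor imprecision is that the approximate version of the reducibility identity (Lemma~\ref{approximate-reducibility}) and the approximate isotropy estimate need to be verified explicitly at each step rather than merely asserted to ``persist'', but you did flag this as a point to check, so the proposal is sound.
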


The proof of Theorem~\ref{whiskered} is postponed to Section~\ref{sec:whiskered},
since we devote Section~\ref{sec:geometry} to discuss some properties stemming
from the geometry of conformally symplectic systems.
Later in this paper, we will present other results. Notably, we will
study the domain of analyticity of the tori for the
small dissipation regime (see Theorem~\ref{thm:domain}).

We also obtain explicit estimates on  the  new hyperbolicity constants
$C_0$, but they are too cumbersome
to state now (see Lemma~\ref{precise} for more detailed
estimates on the new hyperbolicity constants).

\subsection{Some remarks and comments on the statement of
Theorem~\ref{whiskered}}

We collect in this Section some useful comments on the content of Theorem~\ref{whiskered}
and comparisons with other results in the literature.

\medskip

$\bullet$ Note that  the non-degeneracy quantities in
\eqref{degeneracylist} are quantities that can be
estimated just on the approximate solution.
The only global property of  the function needed is
an estimate on $\sup_{|\mu - \mu_0 | \le \eta_0} \| D^j f_\mu\|_{\U}$
and we do not need delicate global properties of the map such as a
global twist condition.

\medskip

$\bullet$ Note that Theorem~\ref{whiskered} is stated without any reference
to an integrable system. We just need an approximate solution of the invariance
equation.

\medskip

$\bullet$ We are not assuming that any of the invariant bundles are trivial (but we will show that the center bundle is trivial as
a consequence of the other hypotheses).

\medskip

$\bullet$ The twist  non-degeneracy condition  $(H5)$
is just that  a very explicit $2d\times 2d$
matrix is non-degenerate. This matrix is formed by the derivatives
of the approximate solution, performing algebraic operations and averages.
It can be computed with a finite number of
computations from the approximate solution.
\medskip

$\bullet$ The above formulation gives a  very transparent proof
of several \emph{``small twist results''}. One can construct
perturbative expansions that satisfy the invariance equation
to arbitrarily high powers of the perturbation  parameter.
At the same time (performing calculations) one can prove
that the twist, hyperbolicity, etc., start to grow like a finite power
of the perturbation.  Then, the theorem will imply the existence of
a solution.

Another application included in this paper is that we will
prove small hyperbolicity assumptions, see Theorem~\ref{thm:domain}.

\medskip
Another non-degeneracy assumption we will need is that
the matrix $M$ introduced in \eqref{M} is invertible if
the initial error is small enough. We will also show
that the iterative procedure maintains the uniform bounds in $M^{-1}$.
In computer assisted proofs, and more explicit treatments, it is
advantageous to obtain precise estimates for $M^{-1}$ at the initial
step.

\medskip

$\bullet$ The hypothesis $(H5)$ is analogue to the Kolmogorov non-degeneracy
condition.We note that if $\lambda = 1$ -- the symplectic case --
then, the condition just becomes $\overline S$ being invertible.
For an integrable system, this is the Kolmogorov non-degeneracy
condition.

\medskip

$\bullet$ The condition $(H5)$ is not a global property of
the map. It is only a numerical condition evaluated on the
approximate solution. It can be readily computed by taking derivatives,
performing algebraic operations and taking averages.

\medskip

$\bullet$ It is possible to use the method of \cite{Moser67}
or the method of \cite{Yoccoz92b,Sevryuk99} to obtain
the result under much
weaker non--degeneracy conditions than $(H5)$ such as R\"ussmann non-degeneracy
conditions.

The proof of this result is particularly transparent taking
advantage of the a-posteriori format which gives very easily the dependence
on parameters.

\medskip

$\bullet$
We note that, thanks to Lemma~\ref{lem:closing}, instead of
the approximate invariance of the splitting included
in $(H2)$, we could  have
assumed that the approximately invariant torus has an invariant splitting.

We have chosen the present formulation to emphasize that all the
hypotheses of Theorem~\ref{whiskered} can be verified from
an approximate solution with just a finite precision computation.

\medskip

$\bullet$ We have treated separately the smallness conditions in the
invariance of the torus $\E$ and the smallness condition in the
invariance of the hyperbolic splitting $\E_h$.  As we will see,
the error in the invariance of the hyperbolic splitting
can be eliminated with a contraction point argument.
Eliminating the error $\E$ requires a Nash-Moser iteration to
beat  the small divisors that appear.

\medskip

$\bullet$ The proof of Theorem~\ref{whiskered} will be based on describing an
iterative process which leads to a very efficient algorithm.
To obtain an algorithm from the proof of Theorem~\ref{whiskered},
one needs to present also descriptions of the discretizations of
the bundles and finite calculations that allow to verify the
hypotheses. These algorithmic details  are presented in \cite{HuguetLS11}
for the symplectic case and they do not need to be modified in
our case.

\medskip

$\bullet$  The error in the hyperbolicity plays a very different role
than the error in the invariance in the iterative process.
We could think of the hyperbolicity as a preconditioner for
the Newton method for the invariance equation. As we will see,
the iterative step has an upper triangular structure. The
error in the invariant splittings
can be eliminated without affecting the embeddings. On the other hand, if
we modify the embedding $K$ and the drift $\mu$, we modify
$Df_\mu \circ K $ and have to correct for the invariant embedding.
This elementary remark will be important for the study of
Lindstedt series in Section~\ref{sec:domain}.

\medskip

$\bullet$ The conformal symplectic properties  of the map imposes many relations between the
properties of the invariant splitting. These will be discussed in Section~\ref{sec:geometry}.

\medskip

$\bullet$
Notice that when $\lambda \ne 1$, the invariant torus is normally hyperbolic
since the center direction, as remarked above, has the conformal factor
$\lambda$ as the multiplier.

This observation allows one to obtain several results, slightly weaker than
Theorem~\ref{whiskered}.

\begin{itemize}
\item[(i)]
Using a-posteriori formulations of the theory of
normally hyperbolic invariant manifolds (\cite{BatesLZ08, CapinskiZ11}), we
obtain from the hypotheses on the approximate invariance and the
approximate invariant splitting that there are smooth invariant tori
for all perturbations (no need to adjust the drift!). Of course,
we do not know that the motion in the manifold will be conjugate
to a rotation.

If we change the drift, using the theory of \cite{Moser66a} we obtain,
under some non-degeneracy conditions that, for
appropriate choices of the drift, the motion on the torus is
conjugate to a rotation. We refer to \cite{CCFL14} for more
details on the argument  and for an application of this strategy to discuss
phase locking and other situations when the motion is not conjugate
to a rotation. Notice that this method produces only finitely
differentiable objects and not analytic ones as the present method.
Also, the algorithms they give rise are very different.

\item[(ii)]
An alternative approach is in~\cite{CanadellHaro}, which deals with
normally hyperbolic tori using the fact that in the stable and unstable
directions we can use an  iterative  method to solve the linearized invariance equation.
For the tangent directions
one needs to adjust parameters to solve the conjugacy equation.
Notice that this method is different from the normally hyperbolic
method, since it produces analytic manifolds but needs to
adjust parameters. This technique leads to very efficient numerical
methods that have been implemented in~\cite{CanadellHaro2}.

\item[(iii)]
Note that the above mentioned approaches do not require
(and do not take advantage of) the conformally symplectic
geometry. This generality is useful for some
models of friction in which the friction does not lead
to a conformally symplectic system. The limit of weak dissipation
in such cases seems a challenging problem.

\item[(iv)]
On the other hand, we note that these methods have estimates that
blow up as $\lambda$ goes to $1$, whereas the method of
this paper leads to a comfortable study of the
small dissipation limit. Indeed, one of the main
results of this paper is the study of the analyticity
domain in the zero dissipation limit, see Section~\ref{sec:domain}.
One of our motivations was precisely the studies in celestial
mechanics where the dissipation is indeed small and the zero dissipation
limit is very relevant.
\end{itemize}

\medskip

$\bullet$ It is important to remark that $\lambda_+$, $\lambda_-$, $\lambda_c^+$, $\lambda_c^-$
appearing in $(H3)$ are only upper bounds. Hence, they are not uniquely defined.
When we consider such values optimal, we obtain the Sacker-Sell spectrum
\cite{SackerS74}. The pairing rule provides relations between the bounds,
but they become equalities for the optimal values.

\section{Some consequences of the geometry}
\label{sec:geometry}

\subsection{Introduction}

In this Section, we present consequences of
the conformally symplectic systems and the trichotomy assumptions
for an (approximately) invariant torus with an (approximately)
invariant splitting.

The geometrically natural arguments (leading to
the sharper results)  happen when the torus is invariant
and the bundle is invariant. The main reason is that we
need to compare vectors and forms in $f(K(\th))$ and in
$K(\th + \omega)$. An alternative to invariance, is
that $\Omega$ is constant.  Of course, our iterative process to improve
approximate solutions, needs to take advantage of the geometry
for the approximate solutions, which are slightly weaker than the
geometrically natural ones.

Hence we will introduce provisionally the hypothesis (HI) below
to be able to carry out geometrically natural arguments.
In Section~\ref{sec:noninvariant}, we show how
to remove this assumption.

(HI) Assume either:
\begin{description}
\item[$\rm{(HI.A)}$] $K$ is an embedding of the torus satisfying
\eqref{invariance} and $E$ is a splitting of the tangent bundle  to
the phase space invariant under the cocycle $Df\circ K (\th)$
under the rotation $\omega$.
\item[$\rm{(HI.B)}$] The phase space is Euclidean and the symplectic
form $\Omega$ is constant.
\end{description}
In many practical applications the case  B) in the alternative
above holds.

Of course, in the iterative step of the KAM theorem, we cannot assume
(HI.A), that the torus is invariant. The assumption (HI.A) is natural
in the development of Lindstedt series that will take place in Section~\ref{sec:domain}.

The removal of (HI.A) in
Section~\ref{sec:noninvariant} will be obtained just by
examining carefully the naturally geometric argument
and adding some extra terms that are controlled by the
invariance error and its derivatives.

\subsection{The results in this Section}
The first result we will present
is the well known pairing
rule (\cite{DettmannM96, WojtkowskiL98}), which relates the
stable/unstable exponential rates (see Section~\ref{sec:georates}).
%This result will not be directly used in our results, but the methods
%are very similar to other methods.

In Section~\ref{sec:isotropic} we show the isotropic property of invariant tori,
namely that the symplectic form restricted to the torus is zero.

We will also show that, because of the conformally symplectic
structure, we have that the symplectic form restricted to the center
is non-degenerate, see Lemma~\ref{lem:nondegenerate} in Section~\ref{sec:coisotropic}.

A rather remarkable result obtained here
is that the center bundle has to be
trivial, see Section~\ref{sec:triviality}. This solves a question
raised in \cite{FontichLS}, which constructed examples where
the stable and unstable bundles were non-trivial.
The automatic reducibility is discussed in Section~\ref{sec:automatic} and some
consequences in Section~\ref{sec:consequences}. Geometric identities for approximately
invariant tori are presented in Section~\ref{sec:noninvariant}.

\subsection{Geometry and rates}\label{sec:georates}
The conformal symplectic properties of the maps imply constraints on
the rates assumed in $(H3)$. In this Section, we develop two of them:
the pairing rule and the rigidity of rates in the center. These properties
will not be used in the proof of Theorem~\ref{whiskered} and, hence, can be
omitted, but we include them since the method of proof is useful in
other parts of the paper.

\subsubsection{The Pairing rule}\label{sec:pairing}

The paper \cite{DettmannM96} studies the effect on the geometry of
eigenvalues of periodic orbits; the paper \cite{WojtkowskiL98} studies
the effect on the Lyapunov exponents of cocyles. In our case, we want to study
the relation with the optimal rates appearing in $(H3)$, which are known also
as Sacker-Sell spectrum (\cite{SackerS74}). We note that the paper
\cite{WojtkowskiL98}, since it worked for
general cocycles,  did not take advantage of the fact that, for
diffeomorphisms, the factor has to be a constant
when $n\geq 2$ (see the argument
after Definition~\ref{defCS}). Therefore, some of the formulas
in \cite{WojtkowskiL98} can be simplified for the applications
of this paper. We will revisit a more detailed comparison with
these papers in \cite{CallejaCL18b}.

The key observation is that since
\[
\Omega(Df^n(x)u,Df^n(x)v)=\lambda^n \Omega(u,v)\ ,
\]
then, if $|Df^n(x)v|\leq C\lambda_-^n |v|$ for any $n\geq 0$, we should have
\[
|Df^n(x)u|\geq \tilde C\ \lambda^n\ \lambda_-^{-n}\ \Omega(u,v)\ |v|^{-1}
\]
for some positive constant $\tilde C$.
That is, if there is a vector that decreases
exponentially fast, there should be others which grow faster than the rates.

Hence, if there is a vector with Lyapunov multiplier smaller than $\lambda$,
there should be another one with Lyapunov multiplier bigger than ${\lambda_-}^{-1}\lambda$.
By reversing the argument we obtain that the set of Lyapunov multipliers $\{\lambda_i\}_{i=1}^{2d}$
should satisfy the pairing rule
\beq{pairing}
\lambda_i\ \lambda_{i+d}=\lambda
\eeq
(compare with the corresponding formula in \cite{WojtkowskiL98}, which involves
an integral of $\log\lambda$). We remark that our desired result is different than that of \cite{WojtkowskiL98},
since we want to obtain uniform bounds rather than Lyapunov exponents.

In our case, we can obtain uniform bounds on the growth, using
Corollary~\ref{newcorollary} below and other elementary arguments.

\begin{remark}
There are more general arguments in Sacker-Sell theory, relating the edge
of the Sacker-Sell spectrum and the supremum of Lyapunov exponents of all measures
(see \cite{SackerS74}). We will not emphasize those arguments, since
we will not use them.
\end{remark}

We note that for every $x$ and any $u\in E_x^u$, $|u|=1$,
there exists $v\in E_x^s$, $|v|=1$, such that $\Omega(u,v)\neq0$
 (see Corollary~\ref{newcorollary} below).
Using the continuity and compactness
of $\torus^d$ and the spheres in the unit bundle, we obtain
$$
\inf_{x\in K(\torus^d_\rho)}\inf_{u\in E_{K(x)}^u,|u|=1}\inf_{v\in E_{K(x)}^s,|v|=1}\ |\Omega_x(u,v)|\geq\zeta
$$
for some positive constant $\zeta$.
Therefore, given $u\in E_{K(x)}^u$, we can choose $v\in E_{K(x)}^s$ so that
\beq{uniform}
|Df^n(K(x))u|\geq {\tilde C}\zeta(\lambda\lambda_-^{-1})^n|u|\ .
\eeq
Using that the bounds \equ{uniform} are uniform, we see that given $u\in E_{K(x)}^u$,
we can apply them to $Df^{-n}(K(x))u$ and obtain
\[
|Df^{-n}(K(x))u|\leq {\tilde C^{-1}}\zeta^{-1}(\lambda^{-1}\lambda_-)^n|u|\ .
\]
Therefore we obtain that we should have
\[
\lambda^{-1}\lambda_-\leq \lambda_+\ .
\]
By applying a similar argument
to the bounds along the stable direction, we obtain the other
inequality, $\lambda^{-1}\lambda_-\geq \lambda_+$, which leads to
\equ{pairing}, that is $\lambda_-\lambda_+^{-1}= \lambda$ for the
optimal values.

\subsection{Isotropic properties of rotational invariant tori}\label{sec:isotropic}
The isotropic property means that the symplectic form restricted to the invariant torus is zero.

To establish that rotational tori are indeed isotropic, we
note that by the invariance equation \equ{inv} we have
$$
K^\ast f_\mu^\ast\Omega = T_\omega^\ast K^\ast\Omega\ .
$$
Since $f_\mu$ is conformally symplectic, according to \equ{conformallysympmap} we have
\begin{equation}\label{formtransform}
\lambda\ K^\ast\Omega=T_\omega^\ast K^\ast\Omega
\end{equation}
and if  $|\lambda| \ne 1$, we obtain by iterating the
relation \eqref{formtransform} (either in the future or in the past) that
$$
K^\ast\Omega=0\ ,
$$
thus proving that the tori are isotropic.

In the case that $\lambda = 1$ (this is a case that has
been discussed in  \cite{Zehnder75}), it is required that $\omega$ is
non-resonant and that the map is exact. We note that, under
the non-resonant hypothesis, we obtain that $K^*\Omega$
is given by a constant matrix. Moreover, if $\Omega = d \alpha$,
we have $K^*\Omega = K^* d \alpha = d K^* \alpha$. The only exact
form with a constant matrix is $0$. Note that in the case that $|\lambda| \ne 1$
we do not need that the symplectic form is exact, nor that $\omega$ is
nonresonant to conclude that the rotational tori are isotropic.

In Section~\ref{sec:coisotropic}, we will see that approximately invariant tori
are also approximately isotropic. In the $\lambda =1$ case
it requires that $\omega$ is Diophantine.

\medskip

In coordinates, the isotropic property of
the invariant torus, using the matrix $J$
defined in Section~\ref{sec:expressions}, is written as
\begin{equation}
\label{isotropic}
DK^T(\th) J_{K(\th)} DK(\th) = 0\ .
\end{equation}
The equation \eqref{isotropic} can be interpreted geometrically
as saying that any vector in the range of $DK(\th)$ is orthogonal
to any vector in the range of $J_{K(\th)}DK(\th)$.

\subsection{Non degeneracy of the symplectic form restricted
  to the center bundle of a rotational invariant torus}\label{sec:coisotropic}

The following result shows that there are many cases where
the symplectic form $\Omega$ has to vanish. As a corollary, we
will deduce that the symplectic form is non-degenerate
when restricted to the center bundle $E^c_\th$.

\begin{lemma}\label{lem:nondegenerate}
Let $E^s_\th$, $E^c_\th$, $E^u_\th$ be an invariant splitting
around a rotational torus  with growth rates as in $(H3)$. Then,
\begin{equation}\label{orthogonality}
\begin{split}
& \Omega (s, c) = 0 \quad \forall\, s \in  E^s_\th, c \in E^c_\th \\
& \Omega (u, c) = 0 \quad \forall\, u \in  E^u_\th, c \in E^c_\th \\
& \Omega (s_1, s_2) = 0 \quad \forall\, s_1, s_2 \in  E^s_\th\\
& \Omega (u_1, u_2) = 0 \quad \forall\, u_1, u_2 \in  E^u_\th\ .\\
\end{split}
\end{equation}

\end{lemma}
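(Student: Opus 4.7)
The plan is to iterate the conformally symplectic identity $f^*\Omega = \lambda\Omega$ along the cocycle \eqref{product} and combine with the trichotomy rate bounds \eqref{growthrates} to force $\Omega$ to vanish on each of the four pairs. Invariance of the torus (hypothesis $(HI.A)$) gives $\Gamma^n(\theta) = Df^n\circ K(\theta)$, and iterating \eqref{conformallysym} yields, for every $n\in\integer$ and every $v_1,v_2 \in \T_{K(\theta)}\M$,
\begin{equation*}
\Omega_{K(\theta+n\omega)}\bigl(\Gamma^n(\theta)v_1,\; \Gamma^n(\theta)v_2\bigr) \;=\; \lambda^n\,\Omega_{K(\theta)}(v_1,v_2).
\end{equation*}
Since $K(\torus^d)$ is compact, $\Omega$ is uniformly bounded there by some $M_\Omega$.

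For each of the four claims I would pick the direction of iteration that contracts both arguments simultaneously, apply the trichotomy bounds to the left-hand side, and divide by $\lambda^n$. For $v_1 \in E^s_\theta$, $v_2 \in E^c_\theta$, iterate forward ($n \to +\infty$): the bounds $|\Gamma^n v_1|\le C_0\lambda_-^n|v_1|$ and $|\Gamma^n v_2|\le C_0(\lambda_c^-)^n|v_2|$ combine to give
\begin{equation*}
|\Omega_{K(\theta)}(v_1,v_2)| \;\le\; M_\Omega\, C_0^2\,\bigl(\lambda_-\lambda_c^-/\lambda\bigr)^n\,|v_1|\,|v_2|,
\end{equation*}
which vanishes as $n\to\infty$ because $\lambda_-<1$ and $\lambda_c^-\le\lambda$ together force $\lambda_-\lambda_c^-<\lambda$. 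For $v_1,v_2\in E^s_\theta$ the same forward iteration produces a factor $(\lambda_-^2/\lambda)^n$, which tends to zero since $\lambda_- < \lambda_c^- \le \lambda < 1$ implies $\lambda_-^2 < \lambda$. For $v_1\in E^u_\theta$, $v_2\in E^c_\theta$ I would iterate backward ($n=-m\to-\infty$) and use $|\Gamma^{-m}v|\le C_0\lambda_+^{-m}|v|$ on $E^u_\theta$ together with $|\Gamma^{-m}v|\le C_0(\lambda_c^+)^{-m}|v|$ on $E^c_\theta$; the resulting prefactor $\bigl(\lambda/(\lambda_+\lambda_c^+)\bigr)^m$ vanishes since $\lambda_+>1$ and $\lambda_c^+\ge\lambda$. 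The purely unstable case $v_1,v_2\in E^u_\theta$ is analogous with prefactor $(\lambda/\lambda_+^2)^m$, controlled by $\lambda<1<\lambda_+^2$.

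The non-degeneracy of $\Omega|_{E^c_\theta}$ suggested by the name of the lemma then follows as a one-line linear-algebra corollary: any $c\in E^c_\theta$ with $\Omega(c,E^c_\theta)=0$ also has $\Omega(c,E^s_\theta)=\Omega(c,E^u_\theta)=0$ by the identities just proved, so $c$ lies in the kernel of $\Omega_{K(\theta)}$ and vanishes by non-degeneracy of $\Omega$.

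The main obstacle is not analytical — the whole argument is geometric-series bookkeeping built on the conformally symplectic scaling — but rather verifying that in each of the four cases the strict inequality between the product of rates and $\lambda$ actually holds. All four inequalities reduce to the standing assumptions $\lambda_-<1<\lambda_+$, $\lambda_c^-\le\lambda\le\lambda_c^+$ and $\lambda<1$, so no hypothesis beyond $(H3)$, $(H3')$ and the invariance built into $(HI.A)$ is required.
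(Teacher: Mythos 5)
Your proof is correct and follows the same strategy as the paper: iterate the conformally symplectic scaling $\Omega(Df^n u, Df^n v) = \lambda^n \Omega(u,v)$ and combine it with the trichotomy rate bounds of $(H3)$ to force each of the four pairings of $\Omega$ to vanish in the limit. Your bookkeeping of the rate factors is in fact cleaner than the paper's (for example, your prefactor $(\lambda/(\lambda_+\lambda_c^+))^m$ for the $(u,c)$ case is the correct one, whereas the paper writes $(\lambda\lambda_+\lambda_c^+)^j$ with the wrong sign on the exponents), but this does not reflect a different method.
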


\begin{proof}
Let $s\in E^s_\th$, $c\in E^c_\th$; then, one finds that
$$
\Omega(s,c)=0\ ,
$$
since the following bounds hold for a suitable constant $C$ and for $j\geq 1$:
\beqano
|\Omega(s,c)|&=&\left |{1\over \lambda}\ \Omega(Df_\mu\ s,Df_\mu\ c)\right |\nonumber\\
&=&{1\over \lambda^j}\ \left |\Omega(Df^j_\mu\ s,Df^j_\mu\ c) \right|\nonumber\\
&\leq&C\ \left({{\lambda_-\, \lambda_c^-}\over \lambda}\right)^j\
, \eeqano whose limit tends to zero as $j$ goes to infinity for
$\lambda_-$ and $\lambda_c^-$ as in $(H3)$ and $(H3')$, i.e. using that
$\lambda_-\lambda_c^-<(\lambda_c^-)^2\leq\lambda^2$.

A similar argument
holds for the unstable bundle,
\beqano
|\Omega(u,c)|&=&\left |\lambda\Omega(Df_\mu^{-1}\ u,Df_\mu^{-1}\ c)\right |\nonumber\\
&=&\lambda^j\ \left |\Omega(Df_\mu^{-j}\ u,Df_\mu^{-j}\ c) \right|\nonumber\\
&\leq&C\ (\lambda\, \lambda_+\, \lambda_c^+)^j\ ,
\eeqano
whose limit goes to zero as $j$ tends to infinity under the condition $(H3')$,
i.e. using that $\lambda\lambda_+\lambda_c^+<\lambda\lambda_+\lambda_+<\lambda_c^-\lambda_+
<\lambda\lambda_+<\lambda_c^-\leq\lambda$.

Next we prove the third of \equ{orthogonality}; for any $s_1$, $s_2\in E_\th^s$, we have:
\beqano
|\Omega(s_1,s_2)|&=&\left |{1\over \lambda}\ \Omega(Df_\mu\ s_1,Df_\mu\ s_2)\right |\nonumber\\
&=&{1\over \lambda^j}\ \left |\Omega(Df_\mu^{j}\ s_1,Df_\mu^{j}\ s_2) \right|\nonumber\\
&\leq&C\ \left({{\lambda_-^2}\over \lambda}\right)^j\ ,
\eeqano
which goes to zero for $j\to\infty$ due to $(H3)$ and $(H3')$, since
$\lambda_-^2/\lambda<\lambda_-\lambda_c^-/\lambda<1$.
The fourth equation in \equ{orthogonality} holds under the assumption $\lambda\lambda_+^2<1$, which is
guaranteed by $(H3)$ and recalling that $\lambda<1$. In fact,we have:
\beqano
|\Omega(u_1,u_2)|&=&\left |\lambda\ \Omega(Df_\mu^{-1}\ u_1,Df_\mu^{-1}\ u_2)\right |\nonumber\\
&=&\lambda^j\ \left |\Omega(Df_\mu^{-j}\ u_1,Df_\mu^{-j}\ u_2) \right|\nonumber\\
&\leq&C\ (\lambda\, \lambda_+^2)^j\ .
\eeqano

\end{proof}

As we will see, the above results lead to some automatic non-degeneracy
conclusions which will be important to develop structures on the theorem.

\begin{cor}
\label{cor:nondegenerate}
In the hypotheses of Lemma~\ref{lem:nondegenerate}
we have that $\Omega$ restricted
to $E^c_\th$ is
non-degenerate.
\end{cor}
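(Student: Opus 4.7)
The plan is to deduce non-degeneracy of $\Omega|_{E^c_\th}$ directly from the vanishing relations in Lemma~\ref{lem:nondegenerate} together with the non-degeneracy of $\Omega$ on the whole tangent space. The argument is pure linear algebra once we observe that those relations say exactly that $E^c_\th$ is $\Omega$-orthogonal to $E^s_\th \oplus E^u_\th$ inside the splitting \eqref{splitting}.

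Concretely, I would proceed as follows. Fix $\th$ and suppose $c_0 \in E^c_\th$ is in the radical of $\Omega|_{E^c_\th}$, i.e.\ $\Omega(c_0, c) = 0$ for every $c \in E^c_\th$. Any tangent vector $v \in \T_{K(\th)}\M$ admits, by \eqref{splitting}, a unique decomposition $v = s + c + u$ with $s \in E^s_\th$, $c \in E^c_\th$, $u \in E^u_\th$. Applying bilinearity and the two mixed vanishing statements in \eqref{orthogonality}, namely $\Omega(c_0,s)=0$ and $\Omega(c_0,u)=0$, together with the standing assumption on $c$, yields $\Omega(c_0, v) = 0$ for all $v$. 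Since $\Omega$ is non-degenerate on $\T_{K(\th)}\M$, this forces $c_0 = 0$, so $\Omega|_{E^c_\th}$ has trivial radical and is therefore non-degenerate.

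There really is no obstacle here: the content of the corollary is entirely contained in the mixed-block vanishing established by the exponential estimates in Lemma~\ref{lem:nondegenerate}, and the present step is just the observation that a two-form which is non-degenerate on the ambient space and for which a direct summand is orthogonal to its complementary summand must remain non-degenerate when restricted to that summand. (As a byproduct, one sees that $\dim E^c_\th$ is even and that $\Omega|_{E^s_\th \oplus E^u_\th}$ is also non-degenerate, with $E^s_\th$ and $E^u_\th$ Lagrangian inside it; these facts are not needed for the corollary but will be used later in Section~\ref{sec:triviality} and in the automatic reducibility of Section~\ref{sec:automatic}.)
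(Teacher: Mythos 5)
Your argument is correct and is essentially identical to the paper's proof: the paper also takes $w$ in the radical of $\Omega|_{E^c_\th}$, invokes the two mixed vanishing relations from Lemma~\ref{lem:nondegenerate} to conclude $\Omega(v,w)=0$ for all $v$, and then uses non-degeneracy of $\Omega$ on the full tangent space to get $w=0$. Your parenthetical remarks about even-dimensionality of $E^c_\th$ and the Lagrangian nature of $E^{s}_\th$, $E^{u}_\th$ inside $E^s_\th\oplus E^u_\th$ are accurate byproducts but not part of the paper's argument here.
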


\begin{proof}
To conclude that $\Omega$ restricted to $E^c_\th$ is non--degenerate, we
observe that if  for some $w\in E^c_\th$, we have that
$$
\Omega(c,w)=0\qquad\forall c\in E^c_\th\ ;
$$
then, using that
$$
\Omega(s,w)=0\ ,\quad \Omega(u,w)=0\ ,\quad \forall s\in E^s_\th\ ,\ \
\forall u\in E^u_\th\ ,
$$
we obtain that $\Omega(v,w)=0$ for any $v\in \T_\th\M$. Therefore, since
$\Omega$ is non-degenerate in the whole space, we conclude that $w=0$.
\end{proof}

\begin{cor}\label{newcorollary}
If $v\in E_x^s$ and for any $u\in E_x^u$ we have that $\Omega(v,u)=0$, then $v=0$.
If $\tilde u\in E_x^u$ and for any $\tilde v\in E_x^s$ we have that
$\Omega(\tilde u,\tilde v)=0$, then $\tilde u=0$.
\end{cor}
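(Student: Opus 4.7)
The plan is to deduce the corollary as an immediate consequence of the orthogonality relations established in Lemma~\ref{lem:nondegenerate}, together with the non-degeneracy of $\Omega$ on the full tangent space $\T_x\M$.

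For the first statement, I would start by fixing $v \in E_x^s$ satisfying $\Omega(v,u) = 0$ for every $u \in E_x^u$. The goal is to upgrade this to $\Omega(v,w) = 0$ for every $w \in \T_x\M$, and then invoke the non-degeneracy of $\Omega$ on the ambient tangent space to conclude $v=0$. Using the splitting $\T_x\M = E_x^s \oplus E_x^c \oplus E_x^u$, any $w$ decomposes as $w = s' + c + u$ with $s' \in E_x^s$, $c \in E_x^c$, $u \in E_x^u$, so by bilinearity
\[
\Omega(v,w) = \Omega(v,s') + \Omega(v,c) + \Omega(v,u).
\]
The third term vanishes by hypothesis. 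The first term vanishes by the third identity in \eqref{orthogonality}, which gives $\Omega(s_1,s_2) = 0$ for any $s_1,s_2 \in E_x^s$. The second term vanishes by the first identity in \eqref{orthogonality}, which gives $\Omega(s,c) = 0$ for any $s \in E_x^s$, $c \in E_x^c$. Hence $\Omega(v,w) = 0$ for all $w \in \T_x\M$, and since $\Omega$ is non-degenerate on $\T_x\M$, one concludes $v = 0$.

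The second statement is entirely symmetric: given $\tilde u \in E_x^u$ with $\Omega(\tilde u,\tilde v) = 0$ for every $\tilde v \in E_x^s$, one writes any $w \in \T_x\M$ as $\tilde v + c + u'$ and uses the second identity in \eqref{orthogonality} ($\Omega(u,c) = 0$) to kill the center component and the fourth identity ($\Omega(u_1,u_2) = 0$) to kill the unstable component, leaving only the stable component, which vanishes by hypothesis. Non-degeneracy of $\Omega$ then forces $\tilde u = 0$.

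No real obstacle is anticipated: all the hard work is already encoded in Lemma~\ref{lem:nondegenerate}, whose proof used the hyperbolic growth bounds together with the conformally symplectic identity $\Omega(Df_\mu\, a, Df_\mu\, b) = \lambda\,\Omega(a,b)$ and the rate constraints in $(H3)$--$(H3')$. The corollary is essentially the statement that $\Omega$ induces a non-degenerate pairing $E_x^s \times E_x^u \to \real$, which, given the vanishing of $\Omega$ on $E_x^s \times E_x^s$, $E_x^s \times E_x^c$, $E_x^u \times E_x^c$, $E_x^u \times E_x^u$, is forced by global non-degeneracy of $\Omega$ on $\T_x\M$.
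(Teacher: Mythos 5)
Your proposal is correct and follows exactly the argument in the paper: the hypothesis together with the four orthogonality relations of Lemma~\ref{lem:nondegenerate} show that $\Omega(v,w)=0$ (respectively $\Omega(\tilde u,w)=0$) for every $w$ in the full splitting $\T_x\M = E_x^s\oplus E_x^c\oplus E_x^u$, and non-degeneracy of $\Omega$ on $\T_x\M$ then forces $v=0$ (respectively $\tilde u=0$). You simply spell out the bilinearity decomposition more explicitly than the paper, which delegates this bookkeeping to the phrase ``identical with that of Corollary~\ref{cor:nondegenerate}.''
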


\begin{proof}

The proof of Corollary~\ref{newcorollary} is identical with that of
Corollary~\ref{cor:nondegenerate}.  We note that the hypotheses of
Corollary~\ref{newcorollary} and the results of
Lemma~\ref{lem:nondegenerate} imply that $\Omega(v,u)=0$ for any
$u\in \T_xM$ which, by the non-degeneracy of $\Omega$, implies the conclusion
of Corollary~\ref{newcorollary}.
\end{proof}

Corollary~\ref{newcorollary} can be interpreted as saying that some of the matrix
elements giving $\Omega$ are not degenerate. This will be useful later
when we discuss pairing rules for exponents.

\subsection{Triviality of the center bundle}
\label{sec:triviality}

The main goal of this Section is to show that the bundle
$E_\th^c$ based on a rotational invariant torus satisfying
our hypotheses (notably that the dimension of the fibers of the bundle
is $2d$) is trivial in the sense of bundle theory.
That is, we will show that $E_\theta^c$ is isomorphic to
a product bundle (namely, a trivial bundle in the language of
bundle theory).

Furthermore, we show that there is a natural system of
coordinates on $E_\theta^c$, see Lemma~\ref{lem:istrivial}.
In this system of coordinates,
the linearization of the invariance equation
\eqref{inv} restricted to the center space becomes a constant
coefficient equation and, hence, can be solved by using Fourier
methods, see Lemma~\ref{neutral} in Section~\ref{sec:noninvariant}.

Note that the triviality of $E^c_\th$ is in contrast
with the stable and unstable bundles, which can be nontrivial
(see examples in \cite{FontichLS}).
Note also that the proof works when the phase space is a manifold
and it applies a-fortiori for symplectic systems.

\begin{lemma}\label{lem:istrivial}
Assume that $K$ is an approximate solution of
\eqref{inv}. Then, we can find a linear operator
\[
B_\th: \Range( DK(\th))  \rightarrow E^c_\th\ ,
\]
such that the center bundle is given by
\beq{E}
E^c_\th = \{ v + B_\th v :\  v \in \Range( DK(\th))\}\ .
\eeq
\end{lemma}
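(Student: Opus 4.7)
The plan is to exhibit an explicit global analytic frame for $E^c_\theta$ consisting of the tangent frame $DK(\theta)$ together with a symplectically--dual frame projected onto the center. The operator $B_\theta$ then encodes the second half of this frame as a graph over the first, and triviality of $E^c$ follows directly from the fact that both halves are trivialized by analytic global sections.

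First I would check that $\Range(DK(\theta)) \subset E^c_\theta$. Differentiating the invariance equation $f_\mu \circ K = K \circ T_\omega$ gives $Df_\mu(K(\theta))\,DK(\theta) = DK(\theta+\omega)$, so under the cocycle, tangent vectors are simply shifted in the base: $\Gamma^j(\theta) DK(\theta) = DK(\theta + j\omega)$. Since $K$ is an embedding, $DK$ is analytic and bounded with $(DK^T DK)^{-1}$ also bounded on $\torus^d_\rho$, and both $|\Gamma^j DK\,\xi|$ and $|\Gamma^{-j} DK\,\xi|$ remain uniformly bounded. This rules out membership in $E^s_\theta$ (which decays forward at rate $\lambda_-<1$) and in $E^u_\theta$ (which decays backward at rate $\lambda_+^{-1}<1$), so the tangent directions necessarily lie in $E^c_\theta$.

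The candidate transverse direction is $V(\theta) := J_{K(\theta)}^{-1} DK(\theta) N(\theta)$ with $N = (DK^T DK)^{-1}$, projected onto the center as $W(\theta) := \Pi^c_\theta V(\theta)$. Using the isotropy identity $DK^T J_K DK = 0$ from Section~\ref{sec:isotropic} and $J_K^T = -J_K$, a short computation using $V^T J = -N^T DK^T$ yields the clean formula $\Omega(V(\theta)\xi,\,DK(\theta)\zeta) = -\xi^T\zeta$, so the $\Omega$--pairing of $V$ against $DK$ is non-degenerate on $\real^d \times \real^d$. I would then argue that $\Range(W(\theta)) \cap \Range(DK(\theta)) = \{0\}$ and that $W(\theta)$ is injective: if $W(\theta)\xi = DK(\theta)\eta$, then $V(\theta)\xi - DK(\theta)\eta = (V-W)(\theta)\xi \in E^s_\theta \oplus E^u_\theta$; pairing via $\Omega$ with any $DK(\theta)\zeta \in E^c_\theta$ gives $0$ by Lemma~\ref{lem:nondegenerate}, while the direct computation gives $-\xi^T\zeta$. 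Hence $\xi = 0$ for all $\zeta$, forcing $\xi = 0$ and then $\eta = 0$.

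With this transversality, $E^c_\theta = \Range(DK(\theta)) \oplus \Range(W(\theta))$ decomposes as a sum of two analytic rank-$d$ sub-bundles, each globally trivialized by the corresponding section, which already gives triviality of $E^c$. The graph parametrization stated in the lemma is obtained by setting $B_\theta : \Range(DK(\theta)) \to E^c_\theta$ via $B_\theta(DK(\theta)\xi) := W(\theta)\xi$; this is well-defined because $DK(\theta)$ is injective, depends analytically on $\theta$, and makes $W(\theta)$ into the graph of $B_\theta$ over $\Range(DK(\theta))$ inside $E^c_\theta$. The main obstacle — and the reason the conclusion is genuinely global rather than a routine perturbation statement — is the joint use of the isotropic identity with the $\Omega$--orthogonality $\Omega(E^{s/u},E^c)=0$ from Lemma~\ref{lem:nondegenerate}. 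Both are consequences of the conformally symplectic hypothesis, and it is precisely their combination that forces the symplectic dual of $DK$ (projected onto $E^c$) to remain transverse to $DK$ uniformly in $\theta$, yielding the two trivial half-bundles.
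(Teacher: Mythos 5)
Your proof is correct and arrives at the same conclusion, but the technical route differs in a noteworthy way from the paper's.  The paper begins by constructing an \emph{auxiliary Riemannian metric} $g$ for which the three bundles $E^s,E^c,E^u$ are pairwise orthogonal, defines the operator $\tJ$ via $g(u,\tJ v)=\Omega(u,v)$, and uses Lemma~\ref{lem:nondegenerate} to conclude that $\tJ$ is block-diagonal with respect to the splitting (cf.\ \eqref{Jblock}); the operator $\tJ^{cc}$ then maps $E^c$ into $E^c$ \emph{automatically}, and the transversal is simply $\tJ^{cc}_{K(\th)}\Range(DK(\th))$.  You instead keep the ambient $J$, form $V=J^{-1}DKN$, and project onto the center via $W=\Pi^c V$; transversality is then established by the pairing computation $\Omega(V\xi,DK\zeta)=-\xi^T\zeta$ combined with the $\Omega$-orthogonality of $E^{s/u}$ with $E^c$.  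Both approaches hinge on the same three ingredients — isotropy of the torus, the $\Omega$-orthogonality in Lemma~\ref{lem:nondegenerate}, and the dimension count $\dim E^c=2d$ — and both exhibit a globally defined rank-$d$ analytic section complementary to $\Range(DK)$, which is what forces the bundle to be trivial.  Your version avoids constructing a custom metric and is closer in spirit to the automatic-reducibility frame $M(\th)$ of Section~\ref{sec:automatic}, at the cost of inserting an explicit projection $\Pi^c$ whose boundedness one must invoke; the paper's metric trick makes the ``lands in $E^c$'' property structural.

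Two small precisions worth flagging.  First, the identity $\Omega(V\xi,DK\zeta)=-\xi^T\zeta$ follows from $J^T=-J$ and $N=(DK^TDK)^{-1}$ alone — it does not actually use isotropy.  Isotropy enters only when you compute $\Omega(W\xi,DK\zeta)=\Omega(DK\eta,DK\zeta)=0$; you should be careful to attribute the identity correctly.  Second, the step ``the tangent directions necessarily lie in $E^c_\theta$'' deserves a sentence more: boundedness of $\Gamma^j DK\,\xi$ in both directions rules out a nonzero $E^s$- or $E^u$-component of $DK\,\xi$ only after you decompose $DK\,\xi = v^s+v^c+v^u$ and observe that the projections $\Pi^{s/u}$ are uniformly bounded, so $|\Gamma^j v^{u}|=|\Pi^u\Gamma^j DK\,\xi|$ would have to grow (resp.\ $|\Gamma^{-j}v^s|$) — merely ``not in $E^s$'' and ``not in $E^u$'' is not by itself the same as ``in $E^c$.''  The paper sidesteps this by establishing $\Range(DK)\subset E^c$ separately in Section~\ref{sec:automatic}, so your proof is more self-contained at this point; just make the decomposition explicit.
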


Notice that \equ{E} shows that $E_\th^c$ is the range under $\Id+B_\th$ of the
tangent bundle of the torus. This shows that $E_\th^c$ is a trivial bundle.

\begin{proof}
We start by remarking that, as it is standard, if we fix a
Riemannian metric $g$, we can identify the two form  $\Omega_x$ with
a linear operator $J(x): \T_x \M \rightarrow \T_x \M$ by requiring
\begin{equation} \label{identification2}
g_x (u, J(x)v) = \Omega_x (u,v) \quad \forall u,v \in \T_x \M\ .
\end{equation}

Of course, the operator $J$ depends on the metric  chosen (we omit the
dependence on the metric from
the notation, unless it can lead to error). It will be advantageous
for us to choose the metric so that the operator $J$ has extra properties.

We will choose a metric $g_x$ such that the spaces $E^c_x, E^s_x, E^u_x$
are orthogonal under $g_x$. A possibly degenerate (i.e., assigning
zero length to non-zero vectors)  metric
can be easily constructed in
coordinate patches.  By adding constructions in different coordinate patches,
we can ensure that the resulting metric is not degenerate.

We denote by  $\tJ_x$  the operator corresponding via
\eqref{identification2} with $\Omega_x$ using the metric
constructed above, which makes the splitting orthogonal.
The properties established in \eqref{orthogonality} imply  that
if we decompose the operator $\tJ_x$ in blocks corresponding to
the decomposition $\T_x \M = E^c_x \oplus E^s_x \oplus E^u_x$, then
we have  the block structure:
\begin{equation}\label{Jblock}
\tJ_x =
\begin{pmatrix}
\tJ^{cc}_x   & 0 & 0 \\
0 & 0 &\tJ^{su}_x \\
0 &  \tJ^{us}_x  & 0 \\
\end{pmatrix}\ .
\end{equation}
The inverse of the operator $\tJ_x$ also has the same structure
as \eqref{Jblock}.

The key of the construction is that the metric $g$ is globally defined
in a neighborhood of the approximately invariant torus and, therefore,
so are the operators $J$ and $J^{-1}$.

We also note that we established in Section~\ref{sec:coisotropic}
that the form restricted to the tangent space vanishes for
invariant tori (we will see  that it is small for
approximately invariant tori in many cases).
Thus, we obtain that
\[
\Omega_{K(\th)}\left( \Range(DK(\th) ), \Range( DK(\th))\right)=
( \tJ^{cc}_{K(\th)}
  \Range( DK(\th)), \Range( DK(\th)))
\]
is very small (identically zero for exactly invariant tori).
In particular, we obtain that the operator
$\tJ^{cc}_{K(\th)}$ maps $\Range( DK(\th))$ into a linearly independent space.

Using that the dimension of the center manifold is $2d$ as in assumption $(H4)$,
we obtain that:
\[
\Range ( DK(\th))  \oplus \tJ_{K(\th)}^{cc}
\Range ( DK(\th))  = E^c_{\th}\ .
\]
Since $\tJ_{K(\th)}^{cc}$ is a linear operator, we obtain that the center bundle
can be expressed as in \equ{E}.
\end{proof}

\subsection{Automatic reducibility}\label{sec:automatic}
A key ingredient in the proof of our main result on whiskered tori is the so--called \emph{automatic reducibility}:
in a neighborhood of an invariant torus, one can construct a change of coordinates such that the linearization of the
invariance equation \equ{inv} becomes a constant coefficient equation.

This technique is presented in full detail in \cite{CallejaCL13a} for conformally symplectic systems
(see \cite{LlaveGJV05} for symplectic systems), but we will present the
details again. An important reason is that, by examing the proof
carefully we will discover the surprising global result that the
center bundle has to be trivial in the sense of bundle theory.

It will be important to note that  there is also
a version of approximate reducibility
when the torus is only approximately invariant,
see Section~\ref{sec:noninvariant}.  The proof of
the results in Section~\ref{sec:noninvariant} will be based on
walking through the arguments in this Section and checking how
they are affected by the error in the invariance equation.

We will assume that the tangent space of $\M$ at $K(\th)$, say $\T_{K(\th)}\M$ with $\th\in\torus^d$,
admits an invariant splitting as
$$
\T_{K(\th)} \M=E_\th^s \oplus E_\th^c \oplus E_\th^u\ .
$$
Taking the derivative of \equ{inv} we obtain
\beq{derinv}
Df_\mu\circ K(\th)\, DK(\th) - DK\circ T_\omega(\th) =0\ .
\eeq

This implies  that the range of $DK(\th)$ is contained in $E^c_\th$.

Let $\Omega_{K(\th)}^c$ denote the symplectic form
$\Omega$ restricted to $E_\th^c$ with
\[
\Omega_{K(\th)}^c (u,v)=(u,J^c_0\circ K(\th) v)\ , \quad \forall \, u, v\in E_\th^c\ ,
\]
where $J^c_0$ is the $2d\times 2d$ matrix representing $\Omega_{K(\th)}^c$ on the
center space. Let $J^c$ be the $2n\times 2n$ matrix of the embeddings of the center space into
the ambient space.

As indicated above, we have that $\Range(DK(\th)) \subset
E^c_\th$. Hence, we can write \eqref{isotropic} as
\begin{equation}
\label{isotropiccenter}
DK^T(\th) J^c\circ K(\th)  DK(\th) = 0\ .
\end{equation}

Let us introduce the $2d \times 2d$ matrix $M(\th)$ on
$E_\th^c$, obtained juxtaposing the two matrices $DK(\th)$,
$(J^c)^{-1}\circ K(\th)\ DK(\th) N(\th)$:
\begin{equation}\label{Mdefined}
M(\th) = [ DK(\th)\ |\  (J^c)^{-1}\circ K(\th)\ DK(\th) N(\th)]\ ,
\end{equation}
where we have introduced the normalization factor $N$ as in \equ{N}.
For typographic reasons, we will write
\[
v(\th) = (J^c)^{-1}\circ
K(\th)\ DK(\th) N(\th)\ .
\]
Note that, because of \eqref{isotropiccenter} we have that the
range of $M$ has dimension $2d$ and, due to our assumption on
the dimension of the center, we obtain that
\begin{equation}\label{rangeM}
\Range(M(\th)) = E^c_\th\ .
\end{equation}
Because of \eqref{rangeM}, we know that there exists a matrix $\B(\th)$ such that
\beq{red}
Df_\mu \circ K(\th) M(\th) = M(\th +\omega)\ \B(\th)\ ,
\eeq
where $\B(\th)$ is required to be upper triangular with constant matrices on the diagonal.

The goal now is to identify the matrix $\B$.
We observe that  \equ{derinv} identifies the first column of
$\B$ to be $\begin{pmatrix} \Id_d  \\  0 \end{pmatrix}$.

To identify the second column of $\B(\th)$, by \eqref{rangeM}, we
know that
\begin{equation}\label{secondcolumn}
Df_\mu\circ K(\th) \, v(\th) = DK(\th+\omega) S(\th) +  v(\th + \omega) U(\th)\ ,
\end{equation}
for some function $U=U(\th)$ that we compute as follows.
According to \cite{CallejaCL13a},
we multiply \eqref{secondcolumn}  on the right by
$DK^T(\th + \omega) J^c\circ K(\th+\omega)$.
Using \eqref{isotropiccenter}, we obtain
\begin{equation} \label{identities}
\begin{split}
DK^T(\th + \omega ) J^c\circ K(\th +\omega)  \,
&Df_\mu\circ K(\th) \, v(\th)=\\
& = DK^T(\th + \omega ) J^c\circ K(\th +\omega)(J^c(\th + \omega))^{-1}  DK(\th+\omega) N(\th + \omega)  U(\th) \\
& = DK^T(\th + \omega ) DK(\th+\omega) N(\th + \omega)  U(\th)  \\
& = U(\th)\ .
\end{split}
\end{equation}

Working on the other side, using the conformally symplectic property
\eqref{conformallysym} and the invariance property of
the center foliation, we obtain:
\[
Df^T_\mu(x) J_{f(x)}^c Df_\mu(x)  = \lambda J_{f(x)}^c\ .
\]
Therefore,  $J_{f(x)}^c  Df_\mu(x) (J_{f(x)}^c)^{-1}  = \lambda Df_\mu^{-T}(x)$.

Hence, we see that the left hand side of \eqref{identities} can be computed as
\[
\begin{split}
DK^T(\th + \omega ) & J^c\circ K(\th +\omega)  \,
Df_\mu\circ K(\th) \, (J^c)^{-1}\circ K(\th) DK(\th) N(\th)
\\
& =\lambda  DK^T(\th + \omega ) Df_\mu^{-T} \circ K(\th) DK(\th) N(\th)  \\
& = \lambda DK^T(\th) DK(\th) N(\th)\ ,
\end{split}
\]
where we have used \equ{derinv}.

Therefore, we conclude that $U(\th) = \lambda\Id_d$.

The matrix $S$ can be  computed  in similar way (it just
suffices to multiply in the right to compute the projections):
it does not require any change from the calculations in
\cite{CallejaCL13a}. The result is given by \equ{torsionW}.

In conclusion, we can  write \equ{red} as
\beq{red2}
Df_\mu \circ K(\th) M(\th) = M(\th +\omega)
\begin{pmatrix} \Id_d & S(\th)\\ 0&\lambda\Id_d \end{pmatrix}\ .
\eeq

We note that the average of the matrix $S(\th)$ computed here is
the matrix $S$ appearing in $(H2)$ in Theorem~\ref{whiskered}.
Hypothesis $(H2)$ is just that the average of the matrix $S$
-- which is a $d\times d$ matrix -- is invertible. Again, we
emphasize that this is a condition that is computed
out of the approximate solution taking derivatives, performing
algebraic operations and taking averages.

\subsection{Consequences of automatic reducibility}\label{sec:consequences}
In Section~\ref{sec:automatic} we showed that the preservation of the geometric structure yields
that we can find a matrix $M(\th)$ in such a way that
$$
M^{-1}(\th+\omega) \Pi_{\th + \omega}^u Df_\mu\circ K(\th)\ \Pi_{\th}^u\ M(\th)=
\begin{pmatrix}
\Id_d  & S(\th) \\
0 & \lambda\Id_d\\
\end{pmatrix}\ .
$$
This shows that we can choose $\lambda^-_c$, $\lambda^+_c$ as close as desired to $|\lambda|$
(at the price of choosing an appropriate proportionality constant).

For some $\lambda$'s it is possible to do a further linear change of variables
$A(x)=\begin{pmatrix}
\Id_d & B(\th) \\
0 & \Id_d\\
\end{pmatrix}$ in such a way that the matrix is even simpler. Computing
$$
\begin{pmatrix}
\Id_d & -B(\th+\omega) \\
0 & \Id_d\\
\end{pmatrix}
\begin{pmatrix}
\Id_d & S(\th) \\
0 & \lambda \Id_d\\
\end{pmatrix}
\begin{pmatrix}
\Id_d & B(\th) \\
0 & \Id_d\\
\end{pmatrix}=
\begin{pmatrix}
\Id_d & S(\th)-\lambda B(\th+\omega)+B(\th) \\
0 & \lambda \Id_d\\
\end{pmatrix}\ ,
$$
one is led to solve the following equation for $B$ given $S$:
\beq{reduction}
S(\th)-\lambda B(\th+\omega)+B(\th)=0\ .
\eeq
The equation \equ{reduction} can be solved when $|\lambda|\not=1$ or, for $\lambda\in\complex$,
when $\lambda$ is Diophantine with respect to $\omega$.
In such a case, we can reduce the cocycle derivative to
$\begin{pmatrix}
\Id_d & 0 \\
0 & \lambda\Id_d\\
\end{pmatrix}$
and, hence, we can take $\lambda^+_c=\lambda^-_c=|\lambda|$.

Nevertheless, when $\lambda$ is close to one (or a root of unity), the $B$ appearing
in the last change of variables may be very large.

This means that if we take $\lambda^+_c=\lambda^-_c=|\lambda|$, in $(H3)$ we can take
a very large constant. Geometrically,  this means
that the center direction (which is a weak center direction)
and the tangent span the symplectically complement to the tangent bundle.
We can obtain the stable direction by taking the simplectic conjugate and
add to it vectors on the stable direction.

The limit of $\lambda$ close to one appears naturally in many physical problems
and will be considered in great detail in Section~\ref{sec:domain} (see also \cite{CCLdomain}).
We note that this is a singular
limit because some part of the normal hyperbolicity is lost. It can be
controlled precisely because the geometry forces that this loss of
hyperbolicity is done in a very specific way.

\subsection{Geometric properties for approximately invariant tori}\label{sec:noninvariant}

Of course, in the iterative procedure, we will not be dealing
with invariant tori but with approximately invariant tori.
Hence, it will be important for us to show
that the geometric identities we developed for invariant tori
-- notably the automatic reducibility -- hold approximately.

The main result of this Section is to show that indeed, this is
the case, see Lemma~\ref{approximate-reducibility}.
The reason is that, to obtain the main equation \eqref{red}, we just took derivatives
of the invariance equation and applied algebraic transformations.
Hence, if the invariance equation holds up to an error, we obtain that
\eqref{red} will hold up to errors which can be estimated by derivatives
of the error in the invariance equation. A subtle point in the derivation
is the use of the isotropic properties of the torus. We will also show
that if the torus is approximately invariant, then it has
to be approximately isotropic (with quantitative bounds).

As a preliminary result, we recall the following classical lemma which
gives the solution of a cohomological equation and which will be needed
in the proof of Lemma~\ref{approximate-reducibility}.

\begin{lemma}\label{neutral}
Let $\lambda \in [A_0, A_0^{-1}]$ for some
$0 < A_0< 1$ and let $\omega\in \D_d(\nu,\tau)$.

Consider a cohomological equation of the form
\begin{equation}
w(\varphi + \omega) -\lambda w(\varphi) = \eta (\varphi)
\label{difference1}
\end{equation}
for some functions $w$ and $\eta$ with $\eta \in \A_\rho$, $\rho>0$, and with zero average:
$$
\int_{\torus^d} \eta(\th)\, d\th =0\ .
$$

Then, there is one and only one solution of \eqref{difference1}
with zero average. Moreover, if $\varphi \in \A_{\rho-\delta}$ for some
$0<\delta<\rho$, then we have
$$
\|\varphi\|_{\rho-\delta} \le C\, \nu\, \delta^{-\tau} \|\eta\|_\rho\ ,
$$
where $C$ is a constant that depends on $A_0$ and the dimension of the
space, but it is uniform in $\lambda$ and it is independent of the
Diophantine constant $\nu$.
\end{lemma}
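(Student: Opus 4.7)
The plan is the classical Fourier-series approach to small-divisor cohomological equations. I would begin by expanding both sides into Fourier series on $\torus^d$,
\[
\eta(\varphi)=\sum_{k\in\integer^d}\hat\eta_k\,e^{2\pi i k\cdot\varphi},
\qquad
w(\varphi)=\sum_{k\in\integer^d}\hat w_k\,e^{2\pi i k\cdot\varphi},
\]
where $\hat\eta_0=0$ by the zero-average hypothesis. Substituting into \eqref{difference1} and matching Fourier modes yields the relations $(e^{2\pi i k\cdot\omega}-\lambda)\,\hat w_k=\hat\eta_k$ for every $k\in\integer^d$. For $k\ne 0$ these uniquely determine $\hat w_k$. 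For $k=0$ the equation reads $(1-\lambda)\hat w_0=0$: when $\lambda\ne 1$ this automatically forces $\hat w_0=0$, and when $\lambda=1$ the coefficient $\hat w_0$ is free and we set it equal to $0$ to pick out the zero-average solution. Uniqueness of the zero-average solution is then immediate from this term-by-term argument.

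The heart of the matter is obtaining a small-divisor bound that is uniform for all $\lambda\in[A_0,A_0^{-1}]$, including the resonant value $\lambda=1$ where the Diophantine condition of $\omega$ is actually used. For real $\lambda>0$ a direct expansion gives
\[
|e^{2\pi i k\cdot\omega}-\lambda|^{2}
=(1-\lambda)^{2}+4\lambda\sin^{2}(\pi k\cdot\omega)
\ge \lambda\,|e^{2\pi i k\cdot\omega}-1|^{2},
\]
so that $|e^{2\pi i k\cdot\omega}-\lambda|^{-1}\le \lambda^{-1/2}\,\nu(\omega;\tau)\,|k|^{\tau}\le A_0^{-1/2}\,\nu\,|k|^{\tau}$ by the Diophantine hypothesis \eqref{DC}. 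The key observation is that no Diophantine hypothesis on $\lambda$ itself is required: the elementary algebraic identity above transfers the problem back to the small divisors of $\omega$ alone, and the resulting constant depends only on $A_0$ and not on $\lambda$ itself.

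Once the uniform divisor bound is in place, the analytic estimate follows by the standard Cauchy-type bound for analytic functions on $\torus^d_\rho$, namely $|\hat\eta_k|\le \|\eta\|_\rho\,e^{-2\pi|k|\rho}$, combined with exponential decay in the reduced strip:
\[
\|w\|_{\rho-\delta}
\le \sum_{k\ne 0}\frac{|\hat\eta_k|}{|e^{2\pi i k\cdot\omega}-\lambda|}\,e^{2\pi|k|(\rho-\delta)}
\le A_0^{-1/2}\,\nu\,\|\eta\|_\rho\sum_{k\ne 0}|k|^{\tau}e^{-2\pi|k|\delta}.
\]
The remaining sum is controlled in the classical manner by comparison with a gamma-function integral, yielding the stated majorization of order $\delta^{-\tau}$ after the dimensional factor is absorbed into the constant $C=C(A_0,d)$. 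The only genuine obstacle is handling the passage through $\lambda=1$ uniformly; the algebraic inequality above resolves this cleanly, and convergence of the Fourier series in $\A_{\rho-\delta}$ then upgrades the formal solution constructed above to an honest element of the Banach space, completing the proof.
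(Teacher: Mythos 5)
Your Fourier-series approach is precisely the one underlying the references the paper cites, and the algebraic identity
\[
|e^{2\pi i k\cdot\omega}-\lambda|^{2}=(1-\lambda)^{2}+4\lambda\sin^{2}(\pi k\cdot\omega)\ \ge\ \lambda\,|e^{2\pi i k\cdot\omega}-1|^{2}
\]
is correct (it is equivalent to $(1-\lambda)^2\ge0$) and is a clean way to obtain the required uniformity in $\lambda\in[A_0,A_0^{-1}]$: it trades the $\lambda$-dependent small divisor for the one of $\omega$ alone at the cost of only a factor $\lambda^{-1/2}\le A_0^{-1/2}$, and in particular passes through $\lambda=1$ without invoking any Diophantine condition on $\lambda$. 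The discussion of the $k=0$ mode (forced to vanish when $\lambda\ne1$; set to zero by the normalization when $\lambda=1$) and the resulting uniqueness of the zero-average solution are also correct.

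The closing estimate, however, has a genuine gap. After the divisor bound you are left with $\sum_{k\ne 0}|k|^{\tau}e^{-2\pi|k|\delta}$, and since the number of $k\in\integer^d\setminus\{0\}$ with $|k|=n$ is of order $n^{d-1}$, this sum behaves like $\int_0^\infty t^{\tau+d-1}e^{-2\pi t\delta}\,dt\sim \Gamma(\tau+d)\,\delta^{-(\tau+d)}$ as $\delta\to0$. The factor $\delta^{-d}$ is not a dimensional constant and cannot be absorbed into $C=C(A_0,d)$; your argument therefore proves the bound with exponent $\tau+d$, not the exponent $\tau$ stated in the lemma. The sharp exponent $\tau$ is exactly the content of R\"ussmann's ``optimal estimates'' papers which the paper cites for this lemma; R\"ussmann's refinement exploits that the Diophantine condition forces the points $e^{2\pi i k\cdot\omega}$ with $|k|$ in a fixed range to be well separated, so only a bounded number of modes in each dyadic shell can have divisors of a given small size, and this counting replaces the crude per-mode bound $|k|^\tau$ and eliminates the $\delta^{-d}$ loss. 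Without that step what you obtain is weaker than the statement as written (though still polynomial in $\delta^{-1}$ and enough for the qualitative KAM conclusions with a worse explicit $\alpha(\tau)$).
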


\begin{comment}
\begin{lemma}\label{contractive}
Assume $|\lambda| \ne1$, $\omega\in \real^n$.
Given any Lebesgue  measurable function $\eta$, there is one Lebesgue  measurable function $\varphi$
satisfying \eqref{difference1}.
Furthermore, for $\rho > 0$, $m>0$, the following estimates hold:
\begin{equation}
\begin{split}
\|\varphi\|_\rho & \le\big|\,|\lambda| -1\big|^{-1}
\|\eta\|_\rho\ .
\end{split}
\label{contractionbounds}
\end{equation}
Moreover, one can bound the derivatives of $\varphi$ with respect to $\lambda$ as
\begin{equation}
\begin{split}
\|D_\lambda^j \varphi\|_\rho
& \le {j!\over {\big|\,|\lambda|-1\big|^{j+1}}}\ \|\eta\|_\rho\ ,\qquad j\geq 1\ .
\end{split}
\label{derivativebounds}
\end{equation}
\end{lemma}
\end{comment}

The proof of Lemma~\ref{neutral} can be found, e.g., in \cite{CallejaCL13a},
see also \cite{Russmann75,Russmann76b,Russmann76a}.

\begin{lemma}\label{approximate-reducibility}
Consider an approximately invariant torus,
satisfying \eqref{approximately-invariant}
where $f_{\mu_0}$ is a family of conformally symplectic maps.

Assume that  the cocycle
over $T_\omega$ given by $\gamma(\th) = Df_{\mu_0} \circ K(\th)$
admits an invariant splitting
which is hyperbolic and whose center dimension is $2d$ dimensional.

Assume furthermore that for some constant $C>0$:
\begin{equation}
\label{inductiveass}
\begin{split}
& \| DK\|_\rho, \| N\|_\rho , \| (J^c \circ K )^{-1} \|_\rho    \le C\ ,\\
& C \|e\|_\rho \delta^{-1} < 1\ .
\end{split}
\end{equation}

Then, defining the matrix $M$ as in \eqref{M}, we have
in the center direction
\begin{equation}
\label{approximatereducibility}
Df_{\mu_0}\circ K(\th) M(\th) = M(\th + \omega)
\begin{pmatrix} \Id_d & S(\th)\\ 0&\lambda\Id_d \end{pmatrix} + \E_R\ ,
\end{equation}
where
\begin{equation}
\label{approxredestimates}
|| \E_R\|_{\rho -\delta} \le C \delta^{-1} \|e\|_\rho\ .
\end{equation}
\end{lemma}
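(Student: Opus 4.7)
The plan is to revisit the derivation of the invariant identity \eqref{red2} given in Section~\ref{sec:automatic} and to record, line by line, the error terms that arise when the invariance equation $f_{\mu_0}\circ K - K\circ T_\omega = 0$ is replaced by $f_{\mu_0}\circ K - K\circ T_\omega = e$ with $\|e\|_\rho\leq \E$. Since $\E_R$ is required only on the smaller domain $\torus^d_{\rho-\delta}$, the fundamental analytic tool I will use repeatedly is the Cauchy estimate $\|De\|_{\rho-\delta}\leq C\delta^{-1}\|e\|_\rho$ (and the analogous bound for $D(e\circ T_\omega^{-1})$). All products of bounded factors that appear may be estimated uniformly thanks to the hypotheses \eqref{inductiveass}.

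First I would establish the approximate first column of the reducibility identity. Differentiating the approximate invariance equation yields $Df_{\mu_0}\circ K(\th)\,DK(\th) - DK(\th+\omega) = De(\th)$, so by the Cauchy estimate the first column of $\E_R$ has the required bound. Second, I would prove approximate isotropy: combining the conformally symplectic identity $f_{\mu_0}^*\Omega = \lambda\Omega$ with $f_{\mu_0}\circ K = K\circ T_\omega + e$, one gets $\lambda K^*\Omega = T_\omega^* K^*\Omega + \eta$, where $\eta$ is a 2-form whose coefficients are controlled linearly by $\|e\|_\rho$ and $\|De\|_{\rho-\delta}$. Because $|\lambda|\ne 1$, this fixed-point-type cohomological equation for $K^*\Omega$ can be inverted directly (no small divisors needed, since the exponential trichotomy of multiplication by $\lambda$ on forms over $T_\omega$ is dominant), giving $\|DK^T J^c\!\circ\!K\, DK\|_{\rho-\delta}\leq C\delta^{-1}\|e\|_\rho$. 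This is the quantitative statement that the torus is approximately isotropic, and it replaces \eqref{isotropiccenter} in the derivation.

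For the second column I would repeat the algebraic manipulation that identified $U=\lambda\Id_d$ and $S$ in Section~\ref{sec:automatic}. Writing $v(\th) = (J^c)^{-1}\!\circ\!K(\th)\,DK(\th)N(\th)$, I compute $Df_{\mu_0}\!\circ\!K(\th)\,v(\th)$ and project onto the span of $DK(\th+\omega)$ and $v(\th+\omega)$. Two geometric inputs are used exactly as in the invariant case: (i) the conformally symplectic identity $Df_{\mu_0}^T J_{f_{\mu_0}(x)} Df_{\mu_0} = \lambda J_x$, which holds pointwise and produces no error; and (ii) the identification $J^c\circ f_{\mu_0}\circ K(\th) = J^c\circ K(\th+\omega) + O(\|e\|_\rho)$, which follows from the mean value theorem applied to the analytic extension of $J^c$ and the approximate invariance. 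Substituting the first-column approximation $Df_{\mu_0}\!\circ\!K(\th)\,DK(\th) = DK(\th+\omega) + De(\th)$, the approximate isotropy of Step 1, and the expression (ii), the invariant algebra of Section~\ref{sec:automatic} goes through with an additive remainder whose norm in $\rho-\delta$ is bounded by a constant multiple of $\|e\|_\rho + \|De\|_{\rho-\delta} \leq C\delta^{-1}\|e\|_\rho$, which is the desired bound on the second column of $\E_R$.

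The main obstacle, in my view, is the bookkeeping of the approximate-isotropy step: one must verify that the residual 2-form $\eta$ coming from the mismatch $f_{\mu_0}\circ K - K\circ T_\omega = e$ is indeed bounded by $\|e\|_\rho$ and its first derivative (so that Cauchy estimates convert this into a $\delta^{-1}$ loss on $\torus^d_{\rho-\delta}$), and then that inversion of $\lambda - T_\omega^*$ on the space of 2-forms does not introduce small divisors, since $|\lambda|\ne 1$ is enough. Once approximate isotropy is in place with the correct scaling, the rest of the argument is a mechanical propagation of errors through the linear-algebraic identities that produced \eqref{red2}, and the explicit shape of $M(\th+\omega)\begin{pmatrix}\Id_d & S(\th) \\ 0 & \lambda\Id_d\end{pmatrix}$ is recovered up to the claimed error $\E_R$.
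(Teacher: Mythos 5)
Your proposal is correct and follows essentially the same route as the paper: differentiate the approximate invariance equation to control the first column, show that $a(\theta)=DK^T(\theta)J_{K(\theta)}DK(\theta)$ satisfies $a\circ T_\omega-\lambda a=e_I$ with $\|e_I\|_{\rho-\delta}\le C\delta^{-1}\|e\|_\rho$ by Cauchy estimates, solve that cohomological equation to get approximate isotropy, and then propagate these errors through the algebra of Section~\ref{sec:automatic}. Your explicit remark that $0<\lambda<1$ makes the equation for $a$ contractive (hence no small divisors) is a useful clarification and is in fact what justifies the stated $\delta^{-1}$ bound, whereas the paper's text invokes Lemma~\ref{neutral} with its $\nu\delta^{-\tau}$ factor.
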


\begin{proof}
The proof is basically walking though the proof  of the
invariant case.

The first step is to study how the approximate invariance
modifies the approximately invariance properties.

We note that \eqref{conformallysym}
gives  that, defining $a(\th) = DK^T(\th) J_{K(\th)}  DK(\th) $,
we have:
\begin{equation}\label{errorisotropy}
\begin{split}
a(\th + \omega) - \lambda a(\th)  & =
DK^T(\th+\omega) J_{K(\th + \omega)}  DK(\th + \omega)
- \lambda  DK^T(\th) J_{K(\th)}  DK(\th) \\
& = \left( DK^T(\th)Df_{\mu_0}^T\circ K(\th) - De^T(\th) \right)
 J_{K(\th + \omega)}
\left(  Df_{\mu_0}\circ K(\th) \, DK(\th) - De(\th) \right)\\
&\ \ \ - \lambda  DK^T(\th) J_{K(\th)}  DK(\th) \\
& = \left( DK^T(\th)Df_{\mu_0}^T\circ K(\th) - De^T(\th) \right)
\left[J_{f_{\mu_0}(K(\th))}  +
(J_{K(\th + \omega)} -J_{f_{\mu_0}(K(\th)}) \right]\\
&\ \ \ \left(  Df_{\mu_0}\circ K(\th) \, DK(\th ) - De(\th)\right)
- \lambda  DK^T(\th) J_{K(\th)}  DK(\th)\\
&= DK^T(\th)Df_{\mu_0}^T\circ K(\th)
J_{f_{\mu_0}(K(\th)} Df_{\mu_0}\circ K(\th) DK(\th)\\
&\ \ \ - \lambda  DK^T(\th) J_{K(\th)}  DK(\th) + e_I(\th)  \\
&= e_I(\th)\ ,
\end{split}
\end{equation}
where the expression for $e_I$ is just products of
derivatives of the invariance equation (with other terms).

Hence, we can bound $e_I$ by using the Cauchy estimates for $De$ and
the smallness assumptions and obtain
$$
\|e_I\|_{\rho  -\delta} \le C \delta^{-1} \| e \|_\rho\ .
$$
We note that \eqref{errorisotropy} shows
that $a$ satisfies a cohomology equation of the form considered
in Lemma~\ref{neutral}. Therefore, we can obtain estimates on $a$ as
$\|a\|_{\rho -2 \delta} \le C \delta^{-\tau}\ \nu\  \| e \|_\rho$.

Again, we can interpret the estimates on $a$ as approximate
orthogonality relations.

We now walk through the calculations used in the computation
leading to \eqref{red2}.

The first column in \eqref{red2} is just the derivative of
the invariance equation and we can use Cauchy estimates to get the
estimates \eqref{approxredestimates} for the first column.

To study the second column, we see that \eqref{secondcolumn} is
still true, since it only depends on the property that $\Range( DK(\th +\omega)),
\Range( v(\th + \omega))$ span $E^c_{\th + \omega}$, which is an easy
consequence of the approximate orthogonality.

As before, we multiply \eqref{secondcolumn} by
$DK^T(\th + \omega ) J^c\circ K(\th +\omega) $. We can follow the
calculations used in \eqref{identities} adding and subtracting the
terms that we have estimated.
\end{proof}

\section{Proof of Theorem~\ref{whiskered}}\label{sec:whiskered}

We now proceed to the proof of Theorem~\ref{whiskered}.
As standard in KAM theorem in an a-posteriori format,
 the proof can be divided into  two parts. The first one
is an algorithm that, given an approximate solution, produces a
much more approximate one (in a slightly weaker sense of approximation)
and with slightly worse quality properties.
In a second part, we show
that, if we start with a sufficiently small error, we can
repeat the procedure indefinitely and that the solution indeed converges
in some appropriate sense, and that the limit inherits properties
such as the hyperbolicity and twist. See
the following Proposition~\ref{pro:iterative} in Section~\ref{sec:step}.

The iterative algorithm will be discussed in
Section~\ref{sec:step}. At the end of the step,
the error will be roughly  the square of the original error,
but measured in a norm corresponding to functions
in a smaller domain than that of the original approximation
 (there is also a factor depending
on the loss of domain). This  iterative
algorithm  will be affected by condition numbers
(hyperbolicity properties, twist, etc.) and we need to
estimate how do they deteriorate.

We note that the step we will discuss in this paper will be
numerically very efficient. It does not require that the system is close
to integrable, it only requires to handle functions of
the dimension of the torus, the storage requirement is small and
the operation count is small.

We anticipate that  to be able to carry the step and obtain estimates, we will
need to introduce \emph{inductive assumptions}. One -- very standard in KAM
theory -- requires that $\Delta$, the correction to $K$, is small
enough in its domain so that the range of $( K + \Delta) $  is
well inside the domain of $f$ (so that we can define
$f\circ(K + \Delta)$ and study Taylor expansions in $\Delta$).  In our case, we
will also need another inductive assumption that guarantees that the
hyperbolicity constants are still bounded. As it is well known,
once we fix the domain loss, the first inductive assumption can
be guaranteed by the requirement that the error is small enough
(so that the correction $\Delta$ is small enough). As for
the assumption on the uniformity of the hyperbolicity, using
Lemma~\ref{general}, it will amount to the block diagonal cocycles
remaining in a neighborhood.

The iterative process is discussed in
Section~\ref{sec:convergence}.  The main, well known, idea is
that if we fix a sequence of domain losses that goes to zero
not too fast (e.g. exponentially fast), if
the original error of the invariance is small enough,
the error of the invariance decreases very fast in the iterative step so
that the ranges of the $K$ do not get close to the boundary of
the domain of $f$ and the block diagonal cocycles do not move out
of the neighborhood specified in Lemma~\ref{general}.

\subsection{Results on the iterative step}
\label{sec:step}

The main result of this Section is Proposition~\ref{pro:iterative}
that specifies how, given an approximate solution with some non-degeneracy
properties (if some quantitative assumptions
are satisfied),  we produce a more approximate one with only slightly worse
non-degeneracy assumptions.  The quantitative assumptions, that allow
to perform the step, are standardly called \emph{inductive assumptions},
since we will use an inductive argument to show that they can
be satisfied for all steps of
the iteration.

\begin{proposition}\label{pro:iterative}
Let $\omega\in \D_d(\nu,\tau)$, $d\leq n$, and let $f_\mu:\M\rightarrow\M$, $\mu\in\real^d$,
be a family of real-analytic, conformally symplectic maps as in Theorem~\ref{whiskered} with
$0<\lambda<1$.

Let $(K,\mu)$, $K :\torus^d \to \M$, $K\in\A_\rho$, be an approximate solution of the invariance equation
\eqref{invariance}:
\beq{invw}
f_\mu\circ K(\th)-K\circ T_\omega(\th)=e(\th)
\eeq
for some function $e=e(\th)$. Denote
$\E = \| e\|_\rho$.

Let $E_\th^s\oplus E_\th^c \oplus E_\th^u$ be an approximately invariant,
hyperbolic splitting
based on $K$.
Denote by $\E_h$ the quantity appearing on \eqref{approximatenorm}.

Assume that $(K,\mu, E_\th^s,E_\th^c, E_\th^s)$ satisfy assumptions $(H2)$-$(H3)$-$(H3')$-$(H4)$-$(H5)$ of Theorem~\ref{whiskered}.

We will assume that $\E, \E_h$ are sufficiently small depending on
the quantities in \eqref{degeneracylist} and on $\eta$ with $\eta$ as in $(H1)$.
The constants $C$ denote expressions
depending only the quantities in \eqref{degeneracylist}, and the formulas for
$C$ will be made explicit along the proof.

Then, we have the following results.

1)
There exists an exact invariant splitting
based on $K$, say
 $\tE^{s/c/u}_\th$.
Denote by $\tilde \gamma_\th^{\sigma, \sigma}$  the projections of
$Df_\mu\circ K(\th)$ corresponding to the invariant splitting.
Then, we have:
\beqa{Eh}
\dist_\rho(E^{s/c/u}_\th,\tE^{s/c/u}_\th)&\leq& C\E_h\ ,\nonumber\\
\|\gamma_\th^{\sigma, \sigma} - \tilde \gamma_\th^{\sigma, \sigma}\|_\rho  &\le& C \E_h\ .
\eeqa

2) Assume that $\delta$ is such that
\begin{equation}
\label{inductive}
C \delta^{-\tau} \|e\|_\rho + \dist( K(\torus^d_\rho)), \complex^{2n} \backslash\D) > \eta/2\ .
\end{equation}

Then, we have that $K'=K+\Delta$, $\mu'=\mu+\beta$ for suitable corrections $\Delta$, $\beta$, satisfy
\[
f_{\mu'}\circ K'(\th)-K'\circ T_\omega(\th)=e'(\th)
\]
with
$$
\|e'\|_{\rho-\delta}\leq C\ \delta^{-2\tau}\ \|e\|_\rho^2\ .
$$
Moreover, the corrections can be bounded as
\begin{equation} \label{correctionbounds}
\begin{split}
&\| \Delta\|_{\rho - \delta} \le C \delta^{- \tau} \E \\
&\| D \Delta\|_{\rho - \delta} \le C \delta^{-1 - \tau} \E \\
& |\beta | \le C \E\ .
\end{split}
\end{equation}

3) Furthermore, the splitting $\tE^{s/c/u}_\th$
is approximately invariant for $Df_{\mu + \beta} \circ ( K + \Delta)$.

3.1) The error in the change of the invariance is smaller
than $C \delta^{-\tau}\E$.

3.2)  The block diagonal cocycles
corresponding to $Df_{\mu + \beta} \circ ( K + \Delta)$
(which we denote by $\hat\gamma_\th^{\sigma, \sigma}$) satisfy
\[
\| \hat\gamma_\th^{\sigma, \sigma}  -  \tilde\gamma_\th^{\sigma, \sigma}  \|_{\rho -\delta}
\le C \delta^{-\tau}\| e\|_\rho + \E_h  \ .
\]
\end{proposition}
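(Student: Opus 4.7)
My plan is to execute the standard quasi-Newton KAM step, taking advantage of the invariant splitting to triangularize the linearized invariance equation and of automatic reducibility to reduce the remaining center block to a pair of cohomology equations with constant multipliers. Part~(1) is immediate from the hypotheses: $(H2)$--$(H3')$ put the cocycle $\gamma(\th)=Df_\mu\circ K(\th)$ and the splitting $E$ in the setting of Lemma~\ref{lem:closing}, producing the locally unique invariant splitting $\tE^{s/c/u}$ with $\dist_\rho(E,\tE)\le C\E_h$; Lemma~\ref{general} then gives $\|\gamma^{\sigma,\sigma}-\tilde\gamma^{\sigma,\sigma}\|_\rho\le C\E_h$. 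From this point on the splitting $\tE$ is exactly invariant, so the geometric identities of Section~\ref{sec:geometry} apply cleanly.

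For part~(2), Taylor-expanding $f_{\mu+\beta}\circ(K+\Delta)-(K+\Delta)\circ T_\omega$ at $(K,\mu)$ yields the Newton equation \eqref{Newtonheuristic}. I decompose $\Delta=\Delta^s+\Delta^c+\Delta^u$ along $\tE$ and project the equation onto each subbundle. The $\tE^s$ (resp.~$\tE^u$) projection is inverted by forward (resp.~backward) geometric summation using the contraction rates from $(H3)$--$(H3')$, precisely as in the dichotomy equations \eqref{invariance2-s}--\eqref{invariance2-u}, producing $\|\Delta^{s/u}\|_{\rho-\delta}\le C(\E+|\beta|)$ with essentially no loss of analyticity domain.

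The center projection is the heart of the matter. Using the approximate reducibility \eqref{approximatereducibility}, write $\Delta^c=M\cdot W$ with $W=(W^a,W^b)^T\in\real^{2d}$, so that, modulo a remainder of size $C\delta^{-1}\E$, the center-projected Newton equation becomes
$$\begin{pmatrix}\Id_d & S(\th)\\ 0 & \lambda\Id_d\end{pmatrix} W(\th) - W(\th+\omega) = -\widetilde e(\th) - \widetilde A^c(\th)\,\beta.$$
The lower block is a cohomology equation with multiplier $\lambda$: decomposing $W^b=\overline{W^b}+(W^b_e)^0+(W_b^c)^0\,\beta$, where $(W_b^c)^0$ is the zero-average solution of $\lambda(W_b^c)^0-(W_b^c)^0\circ T_\omega=-(\widetilde A_2^c)^0$ supplied by Lemma~\ref{neutral}, and averaging gives $(\lambda-1)\overline{W^b}+\overline{\widetilde A_2^c}\,\beta = -\overline{\widetilde e^b}$. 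The upper block $W^a-W^a\circ T_\omega = -S W^b - \widetilde e^a - \widetilde A_1^c\beta$ is a standard Diophantine cohomology equation whose solvability demands vanishing average of its right-hand side, which after substituting $W^b$ reads $\overline S\,\overline{W^b}+\bigl(\overline{S(W_b^c)^0}+\overline{\widetilde A_1^c}\bigr)\beta = -\overline{\widetilde e^a}-\overline{S(W_e^b)^0}$. Together these two averaged equations form the $2d$-dimensional linear system $\S\,(\overline{W^b},\beta)^T = \text{(data depending linearly on }\widetilde e\text{)}$ with $\S$ exactly the matrix of $(H5)$; invertibility of $\S$ uniquely determines $\beta$ and $\overline{W^b}$ with $|\beta|\le C\E$, after which Lemma~\ref{neutral} yields $W^b$ and then $W^a$ (its free average $\overline{W^a}$ fixed by the normalization of Section~\ref{sec:normalization}) with the small-divisor bound $\|W\|_{\rho-\delta}\le C\delta^{-\tau}\E$, proving \eqref{correctionbounds}.

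The quadratic error estimate $\|e'\|_{\rho-\delta}\le C\delta^{-2\tau}\E^2$ then follows from Taylor's theorem applied to $f_{\mu+\beta}\circ(K+\Delta)$ together with the reducibility remainder, both contributions being quadratic in $(\Delta,\beta)$ or linear in $e\times\Delta$, to which Cauchy estimates and \eqref{correctionbounds} apply. For part~(3), the Cauchy estimate $\|D\Delta\|_{\rho-\delta}\le C\delta^{-1-\tau}\E$ together with $|\beta|\le C\E$ gives $\|Df_{\mu+\beta}\circ(K+\Delta) - Df_\mu\circ K\|_{\rho-\delta}\le C\delta^{-\tau}\E$; since $\tE$ is exactly invariant under the old cocycle, it is $(C\delta^{-\tau}\E+\E_h)$-approximately invariant under the new one, the $\E_h$ absorbing residual discrepancy between block representations in $E$ versus $\tE$. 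The main obstacle, as indicated, is the center step: one must identify the obstruction matrix of the paired center cohomology equations with the explicit $\S$ of $(H5)$, so that a single finite-dimensional solve in $\beta$ simultaneously cancels both average conditions and produces the small-divisor bounds with clean dependence on the inductive quantities in \eqref{degeneracylist}.
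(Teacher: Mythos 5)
Your proposal follows essentially the same route as the paper's proof: part~(1) via Lemma~\ref{lem:closing}, the quasi-Newton step via projection onto the exact splitting with geometric summation on $\tE^{s/u}$ and automatic reducibility plus the cohomology equations of Lemma~\ref{neutral} on $\tE^c$, the averaged $2d\times 2d$ system identified with $\S$ of $(H5)$ determining $\beta$ and $\overline{W_2^c}$, and Taylor plus Cauchy estimates for the quadratic error and the cocycle bounds in part~(3). (One small attribution slip: the bound $\|\gamma^{\sigma,\sigma}-\tilde\gamma^{\sigma,\sigma}\|_\rho\le C\E_h$ in part~(1) comes from the projection estimate of Lemma~\ref{lem:closing} directly, not Lemma~\ref{general}, which governs the hyperbolicity rates instead.)
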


\subsubsection{Overview of the argument}
\label{sec:overview}
We look for a correction $(\Delta,\beta)$, such that
$K'=K+\Delta$, $\mu'=\mu+\beta$ satisfy \equ{invw} with an error quadratically smaller.

Expanding the composition to first
order in $\Delta, \beta$ we obtain:
\beqano
f_{\mu'}\circ K'(\th)-K'(\th+\omega) &= & f_\mu\circ K(\th)+Df_\mu\circ K(\th)\,\Delta(\th)\\
&&  +D_\mu f_\mu \circ K(\th)\,\beta\nonumber\\
&&-K(\th+\omega)-\Delta(\th+\omega)\nonumber\\
&& +O(\|\Delta\|^2)+O(|\beta|^2)\ .
\eeqano

Taking into account \equ{invw}, the new error is quadratically smaller if $(\Delta,\beta)$ satisfies
\beq{EW}
Df_\mu\circ K(\th)\ \Delta(\th)+D_\mu f_\mu\circ K(\th)\ \beta-\Delta(\th+\omega)  = -e(\th)
\eeq
or, more generally, if
\beq{quasi-EW}
Df_\mu\circ K(\th)\ \Delta(\th)+D_\mu f_\mu\circ K(\th)\ \beta-\Delta(\th+\omega)  +e(\th) = O( \| e\|_\rho^2) \ .
\eeq

Finding corrections by solving
\eqref{EW} can be thought  of as an infinite dimensional version of Newton method.  The small modification \eqref{quasi-EW} is called a quasi-Newton method,
since as we will see, the errors are also reduced quadratically.

To establish part 1) in Proposition~\ref{pro:iterative},
we just invoke the Lemma~\ref{lem:closing} to
change the assumed approximately invariant splitting into an exactly invariant
splitting.

We project \equ{EW} on the hyperbolic and center spaces,
using the invariant splitting \equ{splitting}.

Due to the invariance of the splitting, we have that
$$
\Pi_{\th+\omega}^{s/c/u}\Big(Df_\mu\circ K(\th)\ \Delta(\th)\Big)=
Df_\mu\circ K(\th)\ \Delta^{s/c/u}(\th)\ ,
$$
where $\Delta^{s/c/u}(\th)\equiv \Pi_{\th+\omega}^{s/c/u}\Delta(\th)$.
 Therefore, \equ{EW} is equivalent to the three equations:
\beq{WB}
Df_\mu\circ K(\th)\ \Delta^{s/c/u}(\th)+
\Pi_{\th+\omega}^{s/c/u} D_\mu f_\mu\circ K(\th)\beta-\Delta^{s/c/u}(\th+\omega)
=-e^{s/c/u}(\th)\ ,
\eeq
where we have defined $e^{s/c/u}(\th)\equiv\Pi_{\th+\omega}^{s/c/u} e(\th)$.

Note that in the three linear equations \equ{WB}, we have four unknowns given by
$\Delta^s$, $\Delta^c$, $\Delta^u$ and $\beta$, which we solve using a substitution method.
We will first  solve (approximately)
the equation for $\Delta^c$, which will determine both $\Delta^c$ and $\beta$.

Then, we will solve the equations for $\Delta^s$, $\Delta^u$.

It is convenient for us to start by solving the equation in
the center space since it is the equation that allows to determine
the $\beta$, which also enters in the equations in the stable/unstable
directions. The alternative of solving the stable/unstable equations
with a floating parameter seems more cumbersome.

The equation in the center can be solved approximately,
using the automatic reducibility  established
 in Section~\ref{sec:inv}.  Note that the automatic reducibility
depends on the geometry. We will also use a
non--degeneracy condition (as in $(H5)$); from
the analysis point of view, it is the most delicate equation since it
involves small divisors (here we use the assumption  that the frequency
is Diophantine) and it entails a loss of domain. In contrast, the
equations along the stable/unstable directions can be solved by soft
methods (iteration and contraction) and do not involve any loss of
domain.

Along the argument, we will make some side remarks about an efficient
numerical implementation, which just need
to implement the correction step. Of course, to be convincing
one needs to monitor also the condition numbers (which we make
explicit)  to ensure
that the numerical solutions produced correspond to the true ones.
We anticipate that all the operations required are algebraic operations
on the approximate solutions, taking derivatives, shifting and solving
cohomology equations. If we discretize the parameterization with $N$ terms,
a Newton step requires $O(N \ln(N))$ operations,
either in a grid discretization or in a Fourier discretization.
Of course, one can generate a grid from a Fourier series by using FFT. The procedure gets
quadratic convergence, but does not need to store (much less solve)
an $N \times N$ matrix. Indeed, the storage required  only $O(N)$.

Once we have the estimates on the correction, the nonlinear
estimates for the error can be obtained by elementary
methods such as adding and subtracting terms and applying Taylor's theorem
to first order (the corrections have been chosen precisely to
cancel out the first order approximation).

\subsubsection{Approximate solution of the linearized
invariance equation in the center space}
\label{sec:centerspace}

As for the center subspace,
we will  construct an approximate solution of the Newton equation in the center direction. That is, we will construct a function that solves
the projection of the linearized equation up to
a quadratically small error (which does not affect the
quadratic convergence of the method).
The construction of this approximate solution, which will
take the rest of the Section, is somewhat subtle, since it
requires taking advantage of some geometric properties
and of the Diophantine properties of the frequency.
This solution will also involve a loss of domain
and we will obtain estimates for the correction only in
a slightly smaller domain than the domain of the error.

The linearized
equation \equ{EW} on the center subspace is
\beq{deltac}
Df_\mu\circ K(\th)\ \Delta^c(\th)+\Pi_{\th+\omega}^c D_\mu f_\mu \circ K(\th)\beta-\Delta^c(\th+\omega)=-e^c(\th)\ .
\eeq

We will take advantage of the geometry, see Section~\ref{sec:noninvariant},
to find an explicit linear change of variables
that approximately reduces   the equation in \eqref{deltac} to
constant coefficients difference equations.   These equations can
be solved using Fourier coefficients (but they need the Diophantine
conditions on the frequency). Let us introduce $W^c$ such that we can write $\Delta^c$ as
$$
\Delta^c=M\ W^c
$$
with $M$ as in \equ{M} and satisfying \equ{approximatereducibility}.

Using \equ{deltac} and \equ{approximatereducibility}, we obtain
that the Newton equation projected in the center is
equivalent to:
\[
\begin{split}
M(\th+\omega)&\left(
\begin{array}{cc}
  \Id_d & S(\th) \\
  0 &
\lambda\Id_d \\
 \end{array}%
\right)\ W^c(\th)-M(\th+\omega)\ W^c(\th+\omega) \\
&+ \E_R(\th) W^c(\th)+
\Pi_{\th+\omega}^c D_\mu f_\mu\circ K(\th)\beta=-e^c(\th)\ .
\end{split}
\]

Since the above equation is hard to solve, we argue heuristically
that the term $\E_R$ is comparable to $\E$ because of
\eqref{approxredestimates}, hence the term $\E_R W^c$ is second order.
Hence, we omit it and consider the following equation
\eqref{quasi-newton}. As we will see, the equation
\eqref{quasi-newton} is readily solvable, admits tame estimates
and, indeed,  omitting the term $\E_R W^c$ does not change the fact that the
error remaining  after the iterative step is quadratic in the original error
(we will obtain estimates for the $W^c$ and we have estimates for
$\E_R$ in \eqref{approxredestimates}):
\begin{equation} \label{quasi-newton}
M(\th+\omega)\left(%
\begin{array}{cc}
  \Id_d & S(\th) \\
  0 & \lambda\Id_d \\
 \end{array}%
\right)\ W^c(\th)-M(\th+\omega)\ W^c(\th+\omega)+
\Pi_{\th+\omega}^c D_\mu f_\mu\circ K(\th)\beta=-e^c(\th)\ .
\end{equation}
Multiplying \eqref{quasi-newton} on the left by $M^{-1}(\th + \omega)$,
we obtain:
\beq{CW}
\left(%
\begin{array}{cc}
  \Id_d & S(\th) \\
  0 & \lambda\Id_d \\
 \end{array}%
\right)W^c(\th)-W^c\circ T_\omega(\th)=-\widetilde e^c(\th)-\widetilde A^c(\th)\beta\ ,
\eeq
where $\widetilde e^c(\th)\equiv M^{-1}\circ T_\omega(\th) e^c(\th)$,
$\widetilde A^c(\th)\equiv M^{-1}\circ T_\omega(\th)\
\Pi_{\th+\omega}^c D_\mu f_\mu\circ K(\th)$.

Writing \equ{CW} in components we obtain
\beqa{FW}
W_1^c(\th)-W_1^c\circ T_\omega(\th) &=& -S(\th)\, W_2^c(\th)-\widetilde e_1^c(\th)
-\widetilde A^c_1(\th)\beta\nonumber\\
\lambda W_2^c(\th)-W_2^c\circ T_\omega(\th) &=& -\widetilde e_2^c(\th)-\widetilde A^c_2(\th)\beta\ ,
\eeqa
where $\widetilde A^c\equiv [\widetilde A^c_1|\widetilde A^c_2]$ with $\widetilde A^c_1$, $\widetilde A^c_2$
denoting the first $d$ and the last $d$ rows of the $2d\times d$ matrix $\widetilde A^c$.

Denoting by $\overline{W^c}$ the average of $W^c$ and setting $(W^c)^0\equiv W^c-\overline{W^c}$, we obtain
\beqa{DW}
\noaverage{W_1^c}(\th) -
\noaverage{W_1^c}\circ T_\omega(\th)
 &=& - \noaverage{S W_2^c}(\th)
 -\noaverage { \widetilde e_1^c}(\th) - \noaverage{\widetilde A_1^c}(\th) \beta\nonumber\\
\lambda \noaverage{W_2^c}(\th)  -  \noaverage{W_2^c} \circ T_\omega(\th)
&=& - \noaverage{\widetilde e_2^c}(\th) - \noaverage{\widetilde A_2^c}(\th)\beta\ .
\eeqa
Since $\noaverage{W_2^c}$ is an affine function of $\beta$, we can write
$\noaverage{W_2^c}=\noaverage{W_a^c}+\beta\noaverage{W_b^c}$ for some
functions $W_a^c$, $W_b^c$. Therefore, the second equation in \equ{DW} can be split as
\begin{equation}
\label{LW}
\begin{split}
&\lambda \noaverage{W_a^c}(\th)  -  \noaverage{W_a^c} \circ T_\omega(\th)
= - \noaverage{\widetilde e_2^c}(\th)\\
&\lambda \noaverage{W_b^c}(\th)  -  \noaverage{W_b^c} \circ T_\omega(\th)
= - \noaverage{\widetilde A_2^c}(\th)\ .
\end{split}
\end{equation}
On the other hand, taking the average of \equ{FW} we obtain
\begin{equation}
\label{HW}
\begin{split}
&\overline{S}\, \overline{W_2^c}
+ (\overline{ S (W_b^c)^0   }
+\overline{\widetilde A_1^c})\beta=-\overline{ S (W_a^c)^0   }- \overline{\widetilde e_1^c}\\
&(\lambda -1) \overline{W_2^c}+\overline{\widetilde A_2^c}\beta
= -\overline{\widetilde {e_2^c}} \ .
\end{split}
\end{equation}
Provided that the non--degeneracy condition \equ{non-degeneracyW} in $(H5)$ is satisfied,
equations \equ{HW} yield $\overline{W_2^c}$ and $\beta$ as the solution of the finite
dimensional system:
\[
\left(\begin{array}{cc}
  \overline{ S} & \overline{S (W_b^c)^0}+\overline{\widetilde A_1^c} \\
  (\lambda -1)\Id_d & \overline{\widetilde A_2^c} \\
\end{array}\right)
\left(\begin{array}{c}
  \overline{W_2^c} \\
  \beta \\
\end{array}\right)=
\left(\begin{array}{c}
  -\overline{ S (W_a^c)^0   }- \overline{\widetilde e_1^c} \\
  -\overline{\widetilde {e_2^c}} \\
\end{array}\right)\ .
\]

Using Lemma~\ref{neutral}, we can solve
\equ{LW} to get $(W_a^c)^0$, $(W_b^c)^0$, which provide
$\noaverage{W_2^c}$ and finally we solve the first of \equ{DW} in
order to compute $\noaverage{W_1^c}$. This yields the solution
of \equ{CW}, which allows to find the correction
$(\Delta^c,\beta)$ on the center subspace.

Note that this correction ($\Delta^c, \beta$) does
not eliminate completely the error in the center direction,
but reduces it to $W^c \E_R$.

Note that we obtain $W_2^c$ solving a small divisor equation
and then we obtain $\beta$, $W^c_1$ by performing algebraic equations and applying
a contraction argument, leading to the following estimates:
\begin{equation}\label{Wcestimates}
 \| W^c \|_{\rho - \delta} \le C \delta^{-\tau}\| e\|_\rho\ ,
\end{equation}
\begin{equation} \label{betaestimates}
|\beta| \leq C\|e\|_\rho\ .
\end{equation}

Then, $\Delta^c$ is obtained multiplying $M$ and $W^c$ and,
hence, under the assumption that $\| M\|_{\rho - \delta}$ (which
we will prove holds inductively) is
bounded by a constant, we obtain
that:
\begin{equation} \label{Deltacbounds}
 \| \Delta^c \|_{\rho - \delta} \le C \delta^{-\tau}\| e\|_\rho\ .
\end{equation}

\subsubsection{Uniqueness properties of the approximate
solutions in the center direction}
\label{sec:uniqueness-center}
It will be important for future studies (e.g., for
the local uniqueness in Theorem~\ref{thm:uniqueness})  to note that the
$W^c$, out of which we construct $\Delta^c$, is obtained
applying Lemma~\ref{neutral}.

We observe that, by following the procedure indicated in
the previous paragraph, we obtain that the $W^c$ solving the
equation \eqref{DW} is unique up to adding a constant to $W^c_1$.
The estimates claimed in \eqref{Wcestimates} correspond to taking
$W^c_1$ with zero average.

This lack of uniqueness of the corrections has a geometric
interpretation related to the underdetermination of \eqref{inv}
remarked at the beginning of Section~\ref{sec:normalization}.
Since $\Delta^c  = M W^c$, with $M$ introduced in
\eqref{Mdefined},  we have that  adding a  constant $\sigma$  to
$W^c$ is tantamount to adding to $\Delta^c$ the quantity $DK \sigma$. That is,
we are changing the corrections by adding to them a movement
along the space of
solutions.

Note that the above statement of uniqueness refers only to
the solutions of \eqref{WC}. If our goal was to improve
the accuracy of the solutions, we have a flexibility of
changing the $\Delta^c$ by other terms that are much smaller than
the error.

\subsubsection{Solutions of the linearized equation in the
hyperbolic directions}
\label{sec:stable}

Let us now consider the stable subspace.
Let $\th'=T_\omega(\th)$, so that we can write \equ{WB} as
\beq{WC}
Df_\mu(K\circ T_{-\omega}(\th'))\ \Delta^{s}(T_{-\omega}(\th'))+
\Pi_{\th'}^s D_\mu f_\mu(K\circ T_{-\omega}(\th'))\beta
-\Delta^{s}(\th')=-\widetilde e^{s}(\th')\ ,
\eeq
where
\[
\widetilde e^{s}(\th')\equiv \Pi_{\th'}^{s} e\circ T_{-\omega}(\th')\ .
\]
We proceed now to solve \equ{WC} for the stable subspace. For any $\beta$, we can write
\beq{stableq}
\Delta^s(\th')=\widetilde e^s(\th')+
Df_\mu(K\circ T_{-\omega}(\th'))\ \Delta^s(T_{-\omega}(\th'))+
\Pi_{\th'}^s D_\mu f_\mu(K\circ T_{-\omega}(\th'))\beta\ ,
\eeq
which leads to the solution for $\Delta^s$ in the form
\beqa{Deltas}
\Delta^s(\th')&=&\widetilde e^s(\th')+\sum_{k=1}^\infty \Big(Df_\mu(K\circ T_{-\omega}(\th'))\times \cdots \times
Df_\mu(K\circ T_{-k\omega}(\th'))\Big)\ \widetilde e^s(T_{-k\omega}(\th'))\nonumber\\
&+&\Pi_{\th'}^s D_\mu f_\mu(K\circ T_{-\omega}(\th'))\beta\nonumber\\
&+&\sum_{k=1}^\infty \Big(D f_\mu(K\circ T_{-\omega}(\th'))\times...\times
D f_\mu(K\circ T_{-k\omega}(\th'))\
\Pi_{\th'}^s D_\mu f_\mu(K\circ T_{-(k+1)\omega}(\th')\Big)\ \beta\ .\nonumber\\
\eeqa
By the variations of parameters formula, we guess a solution of the form \equ{Deltas}.
Then, we observe that the series in \equ{Deltas} converges uniformly in $\A_\rho$, because of the
bounds assumed in \equ{growthrates}. Hence, we can substitute \equ{Deltas} in \equ{stableq} and
rearrange the terms, so that we can verify that \equ{stableq} is satisfied by \equ{Deltas}.

Due to the growth conditions \equ{growthrates} and due to the fact that $D_\mu f_\mu$ is a bounded operator, we obtain that
\beq{deltas2}
\|\Delta^s\|_\rho\leq C\ \Big(\|\widetilde e^s\|_\rho+|\beta|\
\|\Pi_{\th+\omega}^s D_\mu f_\mu \circ K\|_\rho
\Big)\cdot \sum_{k=0}^\infty \lambda_-^k
\eeq
for some constant $C>0$.

Concerning the unstable subspace, from \equ{WB} we can write in a similar way for any $\beta$:
$$
\Delta^u(\th)=(Df_\mu)^{-1}(K(\th))\ [- e^u(\th)-
\Pi_{\th+\omega}^u D_\mu f_\mu(K(\th))\beta+\Delta^u(T_\omega(\th))]\ ,
$$
which leads to the expression
\beqa{Deltau}
\Delta^u(\th)&=&-\sum_{k=0}^\infty \Big((Df_\mu)^{-1}(K(\th))\times...\times
(Df_\mu)^{-1}(K\circ T_{k\omega}(\th))\Big)\  e^u(T_{k \omega}(\th))\nonumber\\
&-&\sum_{k=0}^\infty \Big((Df_\mu)^{-1}(K(\th))\times...\times
(Df_\mu)^{-1}(K\circ T_{k\omega}(\th))\
\Pi_{\th+\omega}^u D_\mu f_\mu(K\circ T_{k\omega}(\th)\Big)\beta\ .\nonumber\\
\eeqa
{From} \equ{Deltau} and \equ{growthrates} we obtain:
\beq{deltau}
\|\Delta^u\|_\rho\leq C\ \Big(\|\tilde e^u\|_\rho+|\beta|\
\|\Pi_{\th+\omega}^u D_\mu f_\mu \circ K\|_\rho\Big)\cdot  \sum_{k=0}^\infty \lambda_+^k
\eeq
for some constant $C>0$. Expressions \equ{deltas2} and \equ{deltau} lead to
$$
\|\Delta^s\|_\rho\leq C\|e\|_\rho\ ,\qquad \|\Delta^u\|_\rho\leq C\|e\|_\rho\ .
$$

\begin{remark}
In numerical implementations, it may be advantagous to
solve \eqref{WC} by adding an equation that makes the cocycle
diagonal.  The effort required is not that much, since
the linearizing transformation can be computed iteratively.
In this case, we can use the Fourier methods to
compute the solutions (see \cite{HLlnum}).
\end{remark}

\subsubsection{Nonlinear estimates for the step}
\label{sec:nonlinear}
Notice that, in all the cases above, we have obtained
estimates for the corrections in terms of
the error. The most delicate ones are those in
the center direction since they involve
the small divisors.

We have that  for all $0 < \delta < \rho$, the
corrections satisfy \equ{correctionbounds}, namely:
\begin{equation}
\label{goodestimates}
\begin{split}
&\| \Delta\|_{\rho - \delta}  \le C  \delta^{-\tau}  \| e\|_\rho\\
&\| D \Delta\|_{\rho - \delta}  \le C  \delta^{-\tau-1}  \| e\|_\rho\\
&|\beta|\le C \| e\|_\rho\ .
\end{split}
\end{equation}

The first line of \eqref{goodestimates} is wasteful in the
stable and unstable directions. The second line can be obtained from
the first line estimates for
$\| \Delta\|_{\rho - \frac{1}{2}\delta}$ and then, using Cauchy estimates for
the derivative.

\subsubsection{Nonlinear estimates}
Let us now conclude the proof of part 2) of Proposition~\ref{pro:iterative}.

We see that, under the assumption
\eqref{inductive}, we have  that we can define the composition
$f_{\mu + \beta}(K + \Delta)$ and that
$(K + \Delta)(\torus^d_{\rho - \delta})$ is away from the boundary of
the domain of the function $f$.

We can write the error of $K'  = K + \Delta$, $\mu'  = \mu + \beta$ as
\[
\begin{split}
f_{\mu + \beta} \circ (K + \Delta) - & (K + \Delta) \circ T_\omega  =
f_\mu \circ K +  Df_\mu \circ K \Delta + D_\mu f_\mu \circ K \beta + \E_T \\
& \phantom{AAAAAA}
- K \circ T_\omega - \Delta \circ T_\omega   \\
%& = f_\mu \circ K  - K\circ T_\omega -Df_\mu \circ K \Delta -
%\partial_\mu f_\mu \circ K \sigma  + \E_T \\
&\phantom{AAAAAA} = \E_T + \E_R W\ ,
\end{split}
\]
where $\E_T$ is the Taylor reminder of the expansion of
the composition and we note that the correction $\Delta$ has
been chosen precisely to cancel the  error.
The Taylor estimates are elementary
\[
\| \E_T \|_{\rho - \delta} \le C ( \| \Delta\|_\rho^2 + |\beta|^2)
\le C \delta^{-2\tau} \|e\|_\rho^2
\]
and the estimates for
$\E_R$ -- the error in the approximate reducibility -- are in
\eqref{approxredestimates}, while the estimates for $W$ can be obtained from
\eqref{goodestimates}.

\begin{remark}
The algorithm we have implemented uses an invariant
splitting to solve the linearized equation. Of course,
after the step, we need to apply  the closing Lemma~\ref{lem:closing} to
obtain a new invariant splitting.
The algorithm we presented is
very efficient from the theoretical point of view. Nevertheless
in a numerical implementation, it is wasteful to obtain an exact
splitting at each step.  In numerical implementations it suffices
to obtain invariant splittings up to  an error comparable
to the error in the invariance equation. Refining the
splittings to more accuracy does not lead to significant improvements
of the step.

Hence, in numerical implementations, it would be advantageous to
develop a Newton method for the invariant splittings and for
the invariance equation, simultaneously.

A rigorous presentation and
 estimates for the  Newton method for the invariance of
the torus, and the splittings (which also allows some elliptic directions)
can be found in \cite{deViana17}.
\end{remark}

\subsubsection{Change in the non-degeneracy conditions after
an iterative step}\label{sec:changes}

Now, we estimate the changes on the non-degeneracy conditions
induced by the change of the $\mu, K$ in the iterative step.
Since we start with an invariant splitting, we have that $\gamma_\th^{\sigma,\sigma'}=0$. Hence, denoting by
$\hat \gamma_\th^{\sigma,\sigma'}$ the cocycle associated to $K'=K+\Delta$, $\mu'=\mu+\beta$, then
$$
\|\hat \gamma_\th^{\sigma,\sigma'}-\gamma_\th^{\sigma,\sigma'}\|_\rho<C(\|D^2f_\mu\circ K\|_\rho\,\|\Delta\|_\rho+
\|D_\mu D f_\mu\circ K\|_\rho\,|\beta|)<C\delta^{-\tau}\E\ ,
$$
thus yielding 3.1 of Theorem~\ref{pro:iterative}.

The results of Proposition~\ref{pro:changes} below give part 3.2 of Proposition~\ref{pro:iterative}, beside
implying the estimate \equ{proj} of Theorem~\ref{whiskered}.

\begin{proposition}\label{pro:changes}
With the notations and assumptions of Proposition~\ref{pro:iterative}, we have the following bounds.

$(i)$ Let $\S'$ be the quantity $\S$ in \equ{non-degeneracyW} after one iterative step,
namely with $K$, $\mu$ replaced by $K'=K+\Delta$, $\mu'=\mu+\beta$ as in
Proposition~\ref{pro:iterative}. Let $0<\delta<\rho$; then:
\beq{Sbound}
\|\S'\|_{\rho-\delta}\leq  \|\S\|_\rho+C\delta^{-\tau}\|e\|_\rho
\eeq
for a suitable constant $C>0$.\\

$(ii)$ The changes in the projections are bounded as
\beq{projbound}
\|\widetilde \Pi^{s/c/u}_{\th}-\Pi^{s/c/u}_\th\|_{\rho -\delta}\leq C\|K'-K\|_\rho \le C \delta^{-\tau} \|e\|_\rho\ .
\eeq

$(iii)$ The changes in the diagonal cocycles are bounded
by
\beq{lambound}
\| \hat\gamma_\th^{\sigma, \sigma} - \tilde\gamma_\th^{\sigma, \sigma} \|_{\rho - \delta}
\le C( \delta^{-\tau}\E + \E_h )\ .
\eeq
\end{proposition}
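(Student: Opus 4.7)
All three estimates are Lipschitz bounds on smooth functionals of $(K,\mu)$. The main tools are \equ{correctionbounds} (controlling $\Delta$, $D\Delta$, $\beta$), the nondegeneracy and smallness in $(H5)$ (keeping the rational constructions $N$, $(J^c\circ K)^{-1}$, $M^{-1}$ and the inverse of $\S$ uniformly bounded after the perturbation), Lemma~\ref{neutral} (for Lipschitz control of the small-divisor solution $(W_b^c)^0$), and Part~1 of Proposition~\ref{pro:iterative} (for $\dist_\rho(E,\tilde E)\leq C\E_h$, equivalently $\|\Pi-\tilde\Pi\|_\rho\leq C\E_h$).

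For (i), each entry of $\S$ is an average of an algebraic-rational combination of $K$, $DK$, $K\circ T_\omega$, $(J^c\circ K)^{-1}$, $Df_\mu\circ K$, $D_\mu f_\mu\circ K$ and $(W_b^c)^0$. The nondegeneracy in $(H5)$, together with the smallness of $\E$ and $\E_h$, guarantees that all the inverses above remain uniformly bounded when $(K,\mu)$ is replaced by $(K+\Delta,\mu+\beta)$. Lemma~\ref{neutral} applied to $\lambda(W_b^c)^0 - (W_b^c)^0\circ T_\omega = -(\widetilde A_2^c)^0$ gives both a uniform bound and a Lipschitz dependence of $(W_b^c)^0$ on $(\widetilde A_2^c)^0$, hence on $(K,\mu)$, with a domain loss of size $\delta^{-\tau}$. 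Writing $\S'-\S$ as a telescoping difference along these smooth dependencies and invoking \equ{correctionbounds} (splitting the loss of domain into $\delta/2+\delta/2$ and using Cauchy estimates to accommodate the $D\Delta$ factor) yields \equ{Sbound}.

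For (ii), parameterize the new splitting $\widetilde\Pi$ associated to the perturbed cocycle $\hat\gamma(\th)=Df_{\mu'}\circ K'(\th)$ relative to the old invariant splitting (from Part~1) as reference, using the graph description \equ{graph}. The graph functions $A^\sigma$ are the unique fixed point of the contraction equations \equ{invariance2-s}--\equ{invariance2-u}, whose nonhomogeneous data depend Lipschitz-continuously on $\hat\gamma$. Since the mean value theorem applied to $Df_\mu\circ K$ jointly in $\mu$ and $K$ gives $\|\hat\gamma-\gamma\|_{\rho-\delta}\leq C(\|\Delta\|_{\rho-\delta}+|\beta|)\leq C\|K'-K\|_\rho$, the same contraction argument as in the proof of Lemma~\ref{lem:closing} produces $\max_\sigma\|A^\sigma\|_{\rho-\delta}\leq C\|K'-K\|_\rho$. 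Passing from the graph distance \equ{distance2} to the intrinsic distance \equ{distancebundle} via the equivalence in Section~\ref{sec:distancesplitting} gives \equ{projbound}.

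For (iii), telescope
\[
\hat\gamma_\th^{\sigma,\sigma}-\tilde\gamma_\th^{\sigma,\sigma}
= (\Pi^\sigma_{\th+\omega}-\tilde\Pi^\sigma_{\th+\omega})\,Df_{\mu'}\!\circ\!K'(\th)\,\Pi^\sigma_\th
+ \tilde\Pi^\sigma_{\th+\omega}\bigl(Df_{\mu'}\!\circ\!K' - Df_\mu\!\circ\!K\bigr)(\th)\,\Pi^\sigma_\th
+ \tilde\Pi^\sigma_{\th+\omega}\,Df_\mu\!\circ\!K(\th)\,(\Pi^\sigma_\th-\tilde\Pi^\sigma_\th).
\]
The first and third terms are bounded by $C\|\Pi-\tilde\Pi\|_\rho\leq C\E_h$ by Part~1 of Proposition~\ref{pro:iterative}, while the middle term is a Lipschitz variation of $Df_\mu\circ K$ in $(K,\mu)$, bounded by $C(\|\Delta\|_{\rho-\delta}+|\beta|)\leq C\delta^{-\tau}\E$. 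Summing yields \equ{lambound}. The main obstacle is (i): one must simultaneously control all the inverses appearing in the construction of $\S$ under perturbation and the small-divisor loss coming from Lemma~\ref{neutral}, which is the source of the $\delta^{-\tau}$ factor; parts (ii) and (iii) are essentially soft contraction-and-telescope arguments.
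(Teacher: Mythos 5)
Your proposal is correct and follows essentially the same approach as the paper: telescoping of the algebraic expressions, Cauchy estimates to control $D\Delta$ from \eqref{correctionbounds}, Lemma~\ref{lem:closing} (via Part 1 of Proposition~\ref{pro:iterative}) for the projections, and the three-term decomposition for the diagonal blocks. Your treatment of (i) is in fact a bit more careful than the paper's, since you explicitly track the cohomology-equation piece $(W_b^c)^0$ through Lemma~\ref{neutral}, a dependence the paper's argument passes over silently.
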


\begin{proof}
We note that the matrix $\S$ is an algebraic expression
of the derivatives of $K$ and the derivatives of
$f_{\mu}$ evaluated at $K$. The projections of the above quantities
are taken on the center directions. We can estimate all
the changes by adding and subtracting, so that we get
only terms in which one changes.
The derivatives of $K$ change
by the derivatives of $\Delta$, which can  be bounded using
Cauchy estimates from the estimates for $\Delta$. Hence, the change of
these terms can be bounded by $C \delta^{- \tau}\| e\|_\rho$, thus
leading to \equ{Sbound}.

The change in the projections is also estimated already and come
from the application of Lemma~\ref{lem:closing}, thus leading to \equ{projbound}.

The changes of the term $Df_{\mu +\beta}$ can be bounded by the
size of $\beta$ and the size of $\Delta$ multiplied by the second derivatives
of $f$ with respect to its arguments.  The terms change
by quantities that are smaller than the previous one.
The estimate in \equ{lambound} is bounded by the sum of the norms of
$D^2f_\mu\circ K\, \Delta$ and $D_\mu D f_\mu\circ K\, \beta$ and using \equ{Eh}.
\end{proof}

\subsubsection{Iteration of the iterative step}\label{sec:convergence}
It is a classical argument in KAM theory that,
if the initial error is small
enough,  the inductive step can be iterated indefinitely
and that it converges to the true solution.  We can also
estimate the difference between the limit and the initial
approximate solutions.

\begin{remark}
\label{rem:uniformrho}
It is important to note that assumptions of smallness in
Lemma~\ref{general} are independent of the $\rho$ considered.
In the applications, we will be considering a sequence of corrections in a
sequence of decreasing domains. We will just need that
the norm of the corrections (in the smallest domain) are
sufficiently small.
\end{remark}

\begin{remark}
\label{rem:uniformtwist}
If we assume that
\[
\|D K_0 -D K_j \|_{\rho_j} \le \alpha
\]
for some $\alpha > 0$ we obtain that
\eqref{non-degeneracyW}
and that the  $\| (DK_j^\T DK_j)^{-1} \|_{\rho_j} $ are bounded by  a
value slightly bigger than the original one, say twice.

Similarly, if $\|K_0 - K_j\|_{\rho_j}$ are sufficiently small, we
can ensure that the range of $K_j$ is inside the domain of $f_\mu$ and at a distance  bigger than $\eta/2$
from the boundary of the domain.

Provided that during the iteration the $DK$ does
not leave the neighborhood more than the distance $\alpha$, we can
use the bounds corresponding to the chosen values of
the condition numbers.

Recalling the definition of $M$ in \equ{Mdefined}, we notice that $M_j-M_0$ is
an algebraic expression of $DK_j$ and $DK_0$. By the mean value theorem, $M_j-M_0$ is
bounded by $DK_j-DK_0$ and hence it is small, if $DK_j$ is close to $DK_0$.
\end{remark}

We will just repeat the standard argument.
We start by fixing the sequences of domains where we will be
doing estimates. Let $\delta_j$, $\rho_{j+1}$ be defined as
$$
\delta_j={\delta_0\over {2^{j+2}}}\ ,\qquad \rho_{j+1}=\rho_j-\delta_j\ .
$$
We take $\delta_0$ to be $1/2$ of the total loss of domain
that we allow in the conclusions of Theorem~\ref{whiskered}.
The $\rho_j$ will be the domains where we will carry estimates in the
$j$-th step.

We recall that to perform the iterative step, we need to make
two inductive assumptions that ensure that we can define
the composition. We will also need to assume that we
are in the region when all the hyperbolicity constants are uniform
(slightly worse than those in the original problem)
and we will also assume that other non-degeneracy conditions are
satisfied.

The important thing to observe is that, if we can carry out
$j$ steps, the conditions for being able to carry out the next step
and remaining in the neighborhood are implied by
\begin{equation} \label{uniformestimatescondition}
\begin{split}
& \| K_0 - K_j\|_{\rho_j} \le \alpha\ ,\\
& \| D K_0 - D K_j\|_{\rho_j} \le \alpha
\end{split}
\end{equation}
for some $\alpha > 0$, which is independent of $j$, see
Remark~\ref{rem:uniformrho} and Remark~\ref{rem:uniformtwist}.

We will assume that for $j$ steps, we have been able to carry out the
step and remain in the set of analytic functions where we have
\eqref{uniformestimatescondition} and, hence, we can perform the iteration.
We will show that, if the initial error
is small enough,  the error has decreased  so much after the
$j$-th step  that we
can apply again the result. Note that this is very similar
to the estimates that one carries out in the elementary Newton Method.

Denoting by $\eps_h\equiv\|e_h\|_{\rho_h}$, we have:
\beqano
\eps_h&\leq&C \delta_h^{-\tau} \eps_{h-1}^2\nonumber\\
&=&(C\delta_0^{-\tau} 2^{3\tau})\ 2^{(h-1)\tau}\ \eps_{h-1}^2\nonumber\\
&\leq&(C\delta_0^{-\tau} 2^{3\tau)})^{1+2+...+2^{h-1}}\
2^{\tau\ [(h-1)+2(h-2)+\ldots+2^{h-2}]}\ \eps_0^{2^h}\nonumber\\
&\leq&(C\delta_0^{-\tau} 2^{3\tau}\, 2^{\tau} \eps_0)^{2^h-1}\ \eps_0\nonumber\\
&=&(\kappa_0 \eps_0)^{2^h-1}\ \eps_0\ ,
\eeqano
where
$$
\kappa_0 = C \delta_0^{-\tau}\, 2^{4\tau}\ .
$$

Starting from $K_h=K_{h-1}+\Delta_{h-1}$, a bound on $K_h-K_0$ and its derivative is obtained as follows:
\beqano
\|K_h-K_0\|_{\rho_h}&=& \|\sum_{j=0}^{h}\Delta_j\|_{\rho_h} \le
\sum_{j=0}^{h} \| \Delta_j\|_{\rho_h}\nonumber\\
&\leq& C\ \sum_{j=0}^{h} \delta_0^{-\tau} 2^{\tau(j+2)}\ (\kappa_0\eps_0)^{2^j-1}\eps_0\ , \\
\eeqano

\beqano
\|DK_h-DK_0\|_{\rho_h}&=& \|\sum_{j=0}^{h}D \Delta_j\|_{\rho_h} \le
\sum_{j=0}^{h} \| D \Delta_j\|_{\rho_h}\nonumber\\
&\leq& C\ \sum_{j=0}^{h} \delta_0^{-\tau-1} 2^{\tau(j+2) +1 }\ (\kappa_0\eps_0)^{2^j-1}\eps_0\ , \\
\eeqano

\begin{equation}
\label{muestimates}
\begin{split}
|\mu_h-\mu_0|&= |\sum_{j=0}^{h}\beta_j|\le
\sum_{j=0}^{h} |\beta_j|
\\
&\leq C\ \sum_{j=0}^{h} \eps_j\leq C\ \sum_{j=0}^{h} (\kappa_0\eps_0)^{2^j-1}\leq 2C\ \eps_0\ ,
\end{split}
\end{equation}
provided that $\kappa_0 \eps_0$ is smaller than $1/2$.

The important points of the above estimates are:
\begin{enumerate}
\item
The bound for $\eps_h$  can be made as
small as we want by making $\eps_0 $ sufficiently small.
Hence, by imposing some smallness assumption in $\eps_0$, we can ensure
that $K_h$ does not leave a neighborhood of $K_0$.
\item
In particular, under suitable
smallness assumptions on $\eps_0$ (independent of $h$),
  the range of $K_h(\torus_{\rho_h}^d)$ is
$\eta/2$ away from the boundary of the domain of $f_\mu$.
\item
Under suitable
smallness assumption on $\eps_0$ (independent of $h$),
 $\| DK_h  -DK_0\|_{\rho_h}$ is so small that we have the
estimates on the hyperbolic splitting assumed in
Lemma~\ref{general}. The $K_h$ are also in the region where we have
uniform bounds on the non-degeneracy constants.
\item
Since
$\| \Delta_h \|_{\rho_j} \le \eps_h \delta_h $ decreases very fast,
much faster than an exponential, we obtain that by
taking $\eps_0$ sufficiently small, we recover the assumption
\eqref{inductiveass}.
\item
Putting together the above two remarks, we obtain that under a finite
number of smallness assumptions on $\eps_0$, we obtain that we
can iterate the procedure indefinitely and remain in the neighborhood
where we have uniform estimates for all the non-degeneracy assumptions.
\item
Since $\sum_h \| \Delta_h \|_{\rho_j} \le \eps_h \delta_h \le C \eps_0$,
we obtain that the $K_h$ converge in $\A_{\rho_\infty}$ and
they satisfy the conclusions of Theorem~\ref{whiskered},
including the estimates \equ{Kmu}.
\end{enumerate}

We conclude by noticing that \equ{lamest} come from the fact that, due to Lemma~\ref{precise},
the estimates $|\lambda_\pm-\tilde\lambda_\pm|$, $|\lambda_c^\pm-\tilde\lambda_c^\pm|$ are bounded by a
constant time $\|\gamma-\tilde\gamma\|_\rho$; with the same argument as in \equ{lambound},
we obtain \equ{lamest}.

\section{Local uniqueness of the solution}\label{sec:uniqueness}

In this Section we give a result on the local uniqueness of the solution of the invariance
equation, see Theorem~\ref{thm:uniqueness}.

\begin{theorem}\label{thm:uniqueness}
Let $\omega\in\D_d(\nu,\tau)$, $d\leq n$; let $\M$ be as in Section~\ref{sec:CS} and let
$f_\mu:\M\rightarrow\M$, $\mu\in\real^d$, be a family of real analytic, conformally symplectic
maps with $0<\lambda<1$.

Let $(K^{(1)},\mu^{(1)})$, $(K^{(2)},\mu^{(2)})$ be solutions of \equ{inv} and assume that
\beq{normaliz}
\int_{\torus^d} M^{-1}(\theta)\ [K^{(2)}(\theta)-K^{(1)}(\theta)]_1\ d\theta=0\ ,
\eeq
Recall that we defined $M$ on the central bundle using  $J^c$,
where $M$ is as in \equ{M} and $[\cdot]_1$ means that we take the first component.

Assume that the non-degeneracy condition $(H5)$ is satisfied at $(K^{(1)},\mu^{(1)})$.
Let $W^c$, $W^s$, $W^u$
be the projections of $M^{-1}(\theta)(K^{(2)}(\theta)-K^{(1)}(\theta))$ on
the center, stable, unstable subspaces. Let $0<\delta<\rho$ and assume that the following
inequalities are satisfied:
\beqa{ineq}
C\ \nu\ \delta^{-\tau}\ \max(\|W^c\|_{\rho+2\delta},|\mu^{(2)}-\mu^{(1)}|)\ <\ 1\nonumber\\
C\ (\|W^s\|_{\rho+\delta}+\|W^u\|_{\rho+\delta})\ <\ 1\ .
\eeqa
Then, we have:
$$
K^{(1)}=K^{(2)}\ ,\qquad \mu^{(1)}=\mu^{(2)}\ .
$$
\end{theorem}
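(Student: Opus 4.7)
My plan is to reduce the uniqueness statement to a fixed-point argument for the linearized invariance operator at the exact solution $(K^{(1)},\mu^{(1)})$. Setting $\Delta = K^{(2)} - K^{(1)}$ and $\beta = \mu^{(2)} - \mu^{(1)}$, I subtract the two invariance equations $f_{\mu^{(j)}} \circ K^{(j)} = K^{(j)} \circ T_\omega$ and Taylor-expand around $(K^{(1)},\mu^{(1)})$; the pair $(\Delta,\beta)$ then satisfies
\[
Df_{\mu^{(1)}} \circ K^{(1)} \cdot \Delta + D_\mu f_{\mu^{(1)}} \circ K^{(1)} \cdot \beta - \Delta \circ T_\omega = -R(\Delta,\beta),
\]
where the Taylor remainder obeys $\|R\|_\rho \le C(\|\Delta\|_\rho^2 + |\beta|^2)$ on any domain where the second derivatives of $f_\mu$ are bounded. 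This is exactly the Newton equation of Section~\ref{sec:step}, except that now $K^{(1)}$ is an \emph{exact} solution, so the automatic reducibility of Section~\ref{sec:automatic} applies without the approximate-reducibility error of Lemma~\ref{approximate-reducibility}.

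Following Section~\ref{sec:centerspace}, I would project onto the invariant splitting at $K^{(1)}$, write $\Delta^c = M W^c$ on the center, and reduce to the cohomology equations \eqref{DW}--\eqref{HW}. Hypothesis $(H5)$ makes the finite-dimensional system \eqref{HW} invertible, producing $\beta$ and $\overline{W^c_2}$; Lemma~\ref{neutral} then yields $(W^c_2)^0$ and $(W^c_1)^0$. The one-dimensional ambiguity in $W^c_1$ noted in Section~\ref{sec:uniqueness-center}---the additive constant reflecting the translation symmetry $K \mapsto K \circ T_\sigma$---is eliminated precisely by the normalization \eqref{normaliz}, which, via $\Delta^c = M W^c$ and the definition of $M$, amounts to $\overline{W^c_1} = 0$. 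The hyperbolic components $W^s, W^u$ are then recovered by the geometric series of Section~\ref{sec:stable} with no loss of domain. Combining these pieces yields, for any $0 < \delta < \rho$,
\[
\|\Delta\|_{\rho - \delta} \le C\,\nu\,\delta^{-\tau}\bigl(\|\Delta\|_\rho^2 + |\beta|^2\bigr), \qquad |\beta| \le C\bigl(\|\Delta\|_\rho^2 + |\beta|^2\bigr),
\]
which are the quantitative inversion bounds I need.

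To conclude $\Delta \equiv 0$, I would iterate as in the convergence proof of Section~\ref{sec:convergence}: set $\delta_n = \delta/2^{n+1}$, $\rho_n = \rho + 2\delta - \sum_{k<n}\delta_k$, and $\eps_n = \|\Delta\|_{\rho_n}$; the estimate above gives $\eps_{n+1} \le C\,\nu\,\delta_n^{-\tau}(\eps_n^2 + |\beta|^2)$, and the smallness hypothesis \eqref{ineq} forces $\eps_n$ to decay super-exponentially while $\rho_n$ decreases only to $\rho$, yielding $\Delta \equiv 0$ on $\torus^d_\rho$ and then $\beta = 0$ from the second displayed inequality. The main delicate step is verifying that the normalization \eqref{normaliz} removes exactly the one-parameter kernel identified in Section~\ref{sec:uniqueness-center}---so that the full linearized operator is invertible in its domain of definition---and checking that the second inequality of \eqref{ineq} suffices to absorb the contribution of $W^s, W^u$ into the contraction at the first step, consistent with the fact that the hyperbolic inversions of Section~\ref{sec:stable} lose no domain. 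Once this is in place, the iteration runs on autopilot, since the estimates are quadratic in $(\Delta,\beta)$ with only a polynomial $\delta^{-\tau}$ blow-up, which is exactly the Newton-style regime where \eqref{ineq} triggers super-exponential decay.
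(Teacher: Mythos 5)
Your setup, linearization, and use of automatic reducibility, the non-degeneracy condition $(H5)$, Lemma~\ref{neutral}, the normalization \eqref{normaliz}, and the hyperbolic inversions of Section~\ref{sec:stable} all track the paper's proof closely. The paper, however, does \emph{not} iterate; it applies a one-shot argument using Hadamard's three-circle inequality \eqref{Had}, $\|W^c\|_\rho \le C\|W^c\|_{\rho-2\delta}^{1/2}\|W^c\|_{\rho+2\delta}^{1/2}$, which converts the quadratic bound $\max(\|W^c\|_{\rho-2\delta},|\beta|) \le C\nu\delta^{-\tau}\max(\|W^c\|_\rho,|\beta|)^2$ into a self-referential inequality $X \le Y\cdot X$ with $Y = C\nu\delta^{-\tau}\max(\|W^c\|_{\rho+2\delta},|\beta|) < 1$, forcing $X=0$ directly.

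The iteration you propose has a genuine gap: the quantity $\beta = \mu^{(2)}-\mu^{(1)}$ is a fixed constant, not a sequence, so the term $|\beta|^2$ in $\eps_{n+1} \le C\nu\delta_n^{-\tau}(\eps_n^2 + |\beta|^2)$ does not shrink with $n$. Once $\eps_n$ falls below $|\beta|$, the right-hand side is dominated by $C\nu\delta_n^{-\tau}|\beta|^2$, and since $\delta_n^{-\tau}\sim 2^{n\tau}\to\infty$, the bound becomes vacuous; the iteration therefore stalls and cannot yield $\eps_n\to 0$ unless you already know $\beta=0$. The attempted resolution ``bound $|\beta|\lesssim\eps_0^2$ first'' does not help, because $\eps_0^4$ is still a fixed positive number and $\eps_n^2$ eventually drops below it, after which the same divergence occurs. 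Conversely, showing $\beta=0$ requires $W^c=0$ first, so the argument as written is circular. You need an interpolation inequality (as in \eqref{Had}) or some other device that turns the mismatch of domains into a cancellation, and that is precisely the role of Hadamard's theorem in the paper's proof; without it, the domain-iteration scheme—while natural for the \emph{existence} part of a KAM proof, where one constructs a new approximation at each step—does not close for the \emph{uniqueness} argument, where $\Delta$ and $\beta$ are fixed.
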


\begin{remark}
The normalization condition \eqref{normaliz} has a very transparent
geometric interpretation. Remember that the $M$ is a linear change of
variables in the torus that selects the tangent and the symplectic
conjugate.  The normalization chosen roughly  imposes that
 the average of the increase  in
phase of $K^{(2)}$ is the same as that of  $K^{(1)}$, since the average over
the angles of the difference of the two solutions in the parametric coordinates is zero (compare
with Section~\ref{sec:normalization}).
\end{remark}

\begin{remark}
Note that if we take $K^{(2)}(\theta) = K^{(1)}(\theta + \sigma)$,
then
\[
\frac{d}{d \sigma}  \int_{\torus^d} M^{-1}(\theta)\ [K^{(1)}(\theta + \sigma)-K^{(1)}(\theta)]_1\ d\theta=0\ \big|_{\sigma = 0} = {\rm Id}\ .
\]

Therefore, the  finite dimensional implicit function theorem shows that
given any $K^{(2)}$ close to $K^{(1)}$, there is a unique $\sigma$ such
that $K^{(2)} \circ T_\sigma$ satisfies the normalization \eqref{normaliz}.
The estimates on $\sigma$ show that if there existed a
solution $K^{(2)}$ close to $K^{(1)}$, then,
we could, without loss of generality, get a normalized solution
by composing it with a translation. This normalized translation
satisfies similar hypothesis of proximity to $K^{(1)}$ as the
original $K^{(2)}$.

Hence, the normalization can be interpreted as fixing the element in the
family of solutions mentioned at the beginning of
Section~\ref{sec:normalization}.
\end{remark}

\begin{proof}
Assume that $(K^{(1)},\mu^{(1)})$, $(K^{(2)},\mu^{(2)})$ satisfy the invariance equations
$$
f_{\mu^{(1)}}\circ K^{(1)}=K^{(1)}\circ T_\omega\ ,\qquad f_{\mu^{(2)}}\circ K^{(2)}=K^{(2)}\circ T_\omega\ .
$$
Let us define the quantity $\widetilde R$ as
\beq{tildeR}
\widetilde \R=f_{\mu^{(2)}}\circ K^{(2)}-f_{\mu^{(1)}}\circ K^{(1)}-Df_{\mu^{(1)}}\circ K^{(1)}\ (K^{(2)}-K^{(1)})-D_\mu
f_{\mu^{(1)}}\circ K^{(1)}\ (\mu^{(2)}-\mu^{(1)})\ .
\eeq
By Taylor's theorem, setting the constant $\widetilde C$ as
$$
\widetilde C={1\over 2}\ \max(\|D^2f_{\mu^{(1)}}\circ K^{(1)}\|_{\rho-\delta},\|D^2_\mu f_{\mu^{(1)}}\circ K^{(1)}\|_{\rho-\delta})\ ,
$$
one has the quadratic estimates
\beq{estR}
\|\widetilde \R\|_{\rho-\delta}\leq \widetilde C\ (\|K^{(2)}-K^{(1)}\|^2_\rho+|\mu^{(2)}-\mu^{(1)}|^2)\ .
\eeq
Projecting \equ{tildeR} on the center subspace, one obtains
\beqa{tildeRC}
&&\Pi_{K^{(2)}(\theta)}^c\{K^{(2)}\circ T_\omega-K^{(1)}\circ T_\omega-Df_{\mu^{(1)}}\circ K^{(1)}\ (K^{(2)}-K^{(1)})-D_\mu
f_{\mu^{(1)}}\circ K^{(1)}\ (\mu^{(2)}-\mu^{(1)})\}\nonumber\\
&-&\Pi_{K^{(2)}(\theta)}^c \widetilde \R=0\ .
\eeqa

Let $W=W(\theta)$ be defined\footnote{Notice that the formulas are the same as the iterative step.
  Therefore, even if we use the same letters as in the study of the iterative
step the interpretation of the letters is slightly different.  In
the iterative step, $W$ was an unknown to be found. Here, it is
a known quantity and our goal is to show that it satisfies some equations
and that we have estimates for it.}
by:
\[
K^{(2)}(\theta)-K^{(1)}(\theta)=M(\theta)W(\theta)\ ,
\]
where we intend that $M(\theta)$ is computed for $K^{(1)}$, i.e.
\[
M(\th) = [ DK^{(1)}(\th)\ |\  (J^c)^{-1}\circ K^{(1)}(\th)\ DK^{(1)}(\th)\ (DK^{(1)}(\th)^TDK^{(1)}(\th))^{-1}]\ .
\]

Let $W^c(\theta)=\Pi_{K^{(2)}(\theta)}^c W(\theta)$; similarly to \equ{estR}, one has that
\beq{PiR}
\|\Pi_{K^{(2)}(\theta)}^c \widetilde \R\|_{\rho-\delta}\leq \widetilde C\ (\|W^c\|_\rho^2
+|\mu^{(2)}-\mu^{(1)}|^2)\ .
\eeq
Using the automatic reducibility\footnote{Note that because $(K^{(1)},\mu^{(1)})$ is an
exact solution of \eqref{inv}, the approximate reducibility is
exact.} \equ{red} on the center subspace, we obtain:
\beqa{aux}
&&\B(\theta)W^c(\theta)-W^c(\theta+\omega)+M^{-1}(\theta+\omega)\
\Pi_{K^{(2)}(\theta)}^c (D_\mu f_{\mu^{(1)}}\circ K^{(1)}(\theta))(\mu^{(2)}-\mu^{(1)})\nonumber\\
&+&M^{-1}(\theta+\omega)\Pi_{K^{(2)}(\theta)}^c  \widetilde \R(\theta)=0\ .
\eeqa

Hence, the $W^c$ is a solution of the equation \equ{aux}
By the remarks in Section~\ref{sec:uniqueness-center} we
obtain that the solutions are unique up to adding a constant vector
to $W^c_1$.  The estimates for the zero average solution are
obtained from \eqref{aux}.  We note that using \eqref{tildeRC},
we obtain that the average of $W^c_1$ should be bounded by the
size of $R$. Hence, we obtain  that  the $W^c$ used here
satisfies:
\beq{est}
\|W^c\|_{\rho-\delta}\leq C\nu\delta^{-\tau}\ \|\Pi_{K^{(2)}(\theta)}^c  \widetilde \R\|_\rho\ , \qquad
|\mu^{(2)}-\mu^{(1)}|\leq C\ \|\Pi_{K^{(2)}(\theta)}^c  \widetilde \R\|_\rho\ .
\eeq
Next, we recall Hadamard's three circle theorem (\cite{Ahlfors}), which gives
\beq{Had}
\|W^c\|_\rho\leq C\ \|W^c\|_{\rho-2\delta}^{1\over 2}\
\|W^c\|_{\rho+2\delta}^{1\over 2}\ .
\eeq
Hence, from \equ{PiR}, \equ{est}, \equ{Had}, we obtain
\beqano
\max(\|W^c\|_{\rho-2\delta},|\mu^{(2)}-\mu^{(1)}|)&\leq&C\nu\delta^{-\tau}\ \|\Pi_{K^{(2)}(\theta)}^c  \widetilde \R\|_{\rho-\delta}\nonumber\\
&\leq&C\nu\delta^{-\tau}\ \max(\|W^c\|_\rho^2,|\mu^{(2)}-\mu^{(1)}|^2)\nonumber\\
&\leq&C\nu\delta^{-\tau}\ \max(\|W^c\|_{\rho+2\delta},|\mu^{(2)}-\mu^{(1)}|)
\ \max(\|W^c\|_{\rho-2\delta},|\mu^{(2)}-\mu^{(1)}|)\nonumber\\
&<&\max(\|W^c\|_{\rho-2\delta},|\mu^{(2)}-\mu^{(1)}|)\ ,
\eeqano
if the first in \equ{ineq} holds. This allows to conclude that $W^c=0$ and $\mu^{(1)}=\mu^{(2)}$.

\vskip.1in

We now project on the hyperbolic subspaces; let
$$
W^h=\Pi_{K^{(2)}(\theta)}^h M (K^{(2)}-K^{(1)})\ ,
$$
where $h=s$ or $h=u$. From Section~\ref{sec:stable}, we have that
$$
\|W^h\|_{\rho-\delta}\leq C\ \|\widetilde \R\|_\rho\ .
$$
From \equ{estR}, knowing that $W^c=0$ and $\mu_1=\mu_2$, we have
$$
\|\widetilde \R\|_{\rho-\delta}\leq \widetilde C\ (\|W^s\|_\rho^2+\|W^u\|_\rho^2)\ .
$$
From Hadamard's three circle theorem, we have:
\beqano
\|W^s\|_\rho^2+\|W^u\|_\rho^2&\leq&C(\|W^s\|_{\rho+\delta}\|W^s\|_{\rho-\delta}+
\|W^u\|_{\rho+\delta}\|W^u\|_{\rho-\delta})\nonumber\\
&\leq&C(\|W^s\|_{\rho+\delta}+\|W^u\|_{\rho+\delta})\ \|\widetilde \R\|_{\rho-\delta}\nonumber\\
&\leq&C(\|W^s\|_{\rho+\delta}+\|W^u\|_{\rho+\delta})\ (\|W^s\|_\rho^2+\|W^u\|_\rho^2)\ .
\eeqano
This relation, together with the second inequality in \equ{ineq}, leads to $W^s=W^u=0$.
\end{proof}

\section{Domains of analyticity of Lindstedt expansions of
whiskered tori}\label{sec:domain}

In this Section we investigate the domains of analyticity of
whiskered tori in conformally symplectic systems in the limit of
small dissipation. The discussion below proceeds
along the lines of \cite{CCLdomain}. We develop an
asymptotic expansion (Lindstedt series) and use
this as a starting point for the application of
Theorem~\ref{whiskered}.

The perturbative expansions for the parameterization of
the torus are not very different from the treatment in \cite{CCLdomain}
-- the main difference is that  the equation that needs
to be studied in the iterative step requires to consider the hyperbolic
directions, but this will be very similar to the
treatment in Section~\ref{sec:stable}.

The construction of expansions of invariant bundles will be based on a perturbative
treatment of the equations \eqref{invariance2-s}, \eqref{invariance2-u}.

It is interesting to note that the Lindstedt series  have a triangular
structure. The series of the parameterization of the torus
solve the invariance  equation
by themselves; on the other hand the series for the parameterization of
the invariant
spaces require the series of parameterization of the torus.

The formal expansions of $K$, $\mu$,
$ A^{\sigma}$ constructed to order $N$ by the Lindstedt method produce
objects  that satisfy the invariance equations up to
an error which has norm less than $C_N |\eps|^{N+1}$. We apply
Theorem~\ref{whiskered} for $\eps$ in a domain such that
the Diophantine properties of $\lambda$ are good enough.
We conclude that there exist exactly invariant $K, \mu$ solutions and
that the distance between $K$, $\mu$ and the formal expansions
to order $N$ are bounded in the domain,
namely the formal series expansions obtained by Lindstedt
series are an asymptotic expansion of the true solution.

\subsection{Description of the set up}

Consider a family of mappings $f_{\mu_\eps,\eps}:\M\rightarrow\M$, such that
$$
f_{\mu_\eps,\eps}^*\,\Omega=\lambda(\eps)\Omega\ ,
$$
where in this Section the conformal factor $\lambda$ is assumed to be an analytic function of a small parameter $\eps$
and it is such that $\lambda(0)=1$, thus allowing to recover the symplectic case for $\eps=0$.
In particular, we assume that $\lambda=\lambda(\eps)$ takes the form
$$
\lambda(\eps) = 1 + \alpha \eps^a + O(|\eps|^{a+1})
$$
for some $a>0$ integer and $\alpha\in\complex\setminus\{0\}$.

\subsection{Some standard definitions}

In our results, we will show
that some functions
depending  on a parameter $\eps$ are analytic in $\eps$.
The stronger (and most natural) way to study
the dependence is to consider the functions for
fixed $\eps$ taking values in a Banach space of
functions. The analyticity is taken to mean that the
function can be expressed as a Taylor series
in the neighborhood of each point.  Other seemingly weaker
definitions turn out to be equivalent (\cite[Chapter III]{HilleP}).

As standard, given a sequence $B_j$ of
elements in a Banach space,  a formal power series in $\eps$
is an expression of the form  $B^\infty_\eps
= \sum_{j =0}^\infty \eps^j B_j$ (the sum is  not meant to converge).
We  denote by
$B^{[\le N]}_\eps =  \sum_{j =0}^N \eps^j B_j$.

In our application, we will consider series in which
the Banach spaces are just $\complex^d$
(in the case of $K_\eps$, the maps take  values in the phase space
and in the case of $A^\sigma_\eps$, it is a space of bundle maps).

\subsection{Description of the domains of analyticity}
Recalling Definition~\ref{def:DC}, we introduce the following sets, where the Diophantine constants behave in such a way that one can start
an iterative convergent procedure (see \cite{CCLdomain}).

\begin{definition}\label{def:sets}
For $A>0$, $N\in\integer_+$, $\omega\in\real^d$, let the sets $\G=\G(A;\omega,\tau,N)$ and
$\Lambda=\Lambda(A;\omega,\tau,N)$ be defined as
\beqano
\G(A;\omega,\tau,N)&\equiv&\{\eps\in\complex:\ \nu(\lambda(\eps);\omega,\tau)\ |\lambda(\eps)-1|^{N+1}\leq A\}\ ,\nonumber\\
\Lambda(A;\omega,\tau,N)&\equiv&\{\lambda\in\complex:\ \nu(\lambda;\omega,\tau)\ |\lambda-1|^{N+1}\leq A\}\ .
\eeqano
For $r_0\in\real$, let
\beq{Gr0}
\G_{r_0}(A;\omega,\tau,N)=\G\cap \{\eps\in\complex:\  |\eps| \le r_0\}\ .
\eeq
\end{definition}

\subsection{Statement of the main result on
domains of analyticity, Theorem~\ref{thm:domain}}\label{sec:dom}

We will prove that the parametrization and the
drift are analytic in a domain $\G_{r_0}$ as in \equ{Gr0} for a
sufficiently small $r_0$. This domain is obtained by removing from a
ball centered at zero a sequence of smaller balls whose center lies
along smooth lines going through the origin. The radii of the balls
which have been removed decrease faster than any power of the distance
of their center from the origin.
As in \cite{CCLdomain}, we conjecture that this domain is essentially optimal.

\begin{theorem}\label{thm:domain}
Let $\omega \in \D_d(\nu, \tau)$, $d\leq n$, as in \eqref{DC},
let $\M$ be as in Section~\ref{sec:CS}, and let $f_{\mu,\eps}$ with
$\mu\in\Gamma$ with $\Gamma\subseteq \complex^d$ open, $\eps\in\complex$, be a family of conformally
symplectic maps.

Assume that the family of maps $f_{\mu,\eps}$ is conformally symplectic,
with a conformal symplectic factor that depends analytically
on $\eps$, $\lambda_\eps = 1 + \alpha \eps^a  + O(\eps^{a+1})$ for
$\alpha \in \real$, $\alpha \ne 0$, $a \in \nat$.

(A1) Assume that for $\mu=\mu_0,\eps= 0$ the map $f_{\mu_0, 0}$ admits a whiskered invariant torus.

This assumption amounts to the following requirements:

(A1.1) There exists an embedding $K_0:\torus^d \rightarrow \M$, $K_0  \in \A_\rho$ for some
$\rho>0$, such that
\beq{inv0}
f_{\mu_0,0} \circ K_0 = K_0 \circ T_\omega\ .
\eeq

(A1.2) There exists a splitting $T_K(\th)\M =
E^s_\th \oplus E^c_{\th}  \oplus E^u_\th$.
This splitting is invariant under the cocycle
$\gamma^0_\th = D f_{\mu_0,0} \circ K_0(\th) $
and satisfies Definition~\ref{def:trichotomy}.
The ratings of the  splitting  satisfy the assumptions (H3), (H3')
and (H4) of Theorem~\ref{whiskered}.

(A.2) We assume that the function $f_{\mu, \eps}(x)$ is
analytic in all of its arguments and that the analyticity domains
are large enough. That is:

(A2.1) Both the embedding $K_0(\th)$ and the splittings $E^{s,c,u}_\th$
considered as a function of $\th$ are  in $\A_{\rho_0}$ for some $\rho_0 > 0$.

(A2.2) Assume that  there is a domain
$\U\subset\complex^n/\integer^n \times \complex^n $ such
that for $|\eps| \le \eps^*$ and all $\mu$ with
$| \mu - \mu_0| \le \mu^*$, we have that $f_{\mu, \eps}$ is
defined in $\U$ and we also have \eqref{compositions}.

(A3) The invariant torus satisfies the twist
condition (H5) of Theorem~\ref{whiskered}.

\noindent
Then, we have:

\medskip

B.1) We can compute formal power series expansions
\[
K_\eps^{[\infty]}= \sum_{j=0}^\infty\eps^j  K_j
\qquad
\mu_\eps^{[\infty]} =  \sum_{j=0}^\infty\eps^j  \mu_j
\]
satisfying \eqref{inv} in the sense of formal power series, which means that
for any $0 < \rho' \le \rho$ and $N\in\nat$, we have
$$
|| f_{\mu_{\eps}^{[\le N]}, \eps} \circ K^{[\le N]}_\eps -  K^{[\le N]}_\eps \circ T_\omega ||_{\rho'} \le C_N |\eps|^{N+1}\ .
$$

B.2) We can compute four formal power series expansions
\[
A^{\sigma,\infty}_\eps= \sum_{j=0}^\infty\eps^j  A_j^\sigma\ ,\qquad
\sigma = \st, \hat \st, \un, \hat \un\ ,
\]
\[
A^{\sigma}_j(\th) : E^\sigma_0(\th) \rightarrow E^{\hat \sigma}_0(\th)
\]
and the $A^\sigma_j \in \A_\rho$ in such a way that
the operators satisfy the equation
\eqref{invariance2-s}, \eqref{invariance2-u}
for invariant dichotomies  in the sense of power series.

\medskip

B.3) For the set $\G_{r_0}$ as in
\equ{Gr0} with $r_0$ sufficiently small and for $0 < \rho' < \rho$,
there  is $K_\eps:\G_{r_0}\rightarrow\A_{\rho'}$, $\mu_\eps:\G_{r_0}\rightarrow\complex^d$, analytic in the interior of
$\G_{r_0}$ taking values in $\A_{\rho'}$
which extends continuously to the boundary of $\G_{r_0}$,
such that for $\eps\in\G_{r_0}$ the invariance equation is satisfied exactly:
$$
f_{\mu_\eps, \eps} \circ K_\eps - K_\eps \circ T_\omega=0\ .
$$
Moreover, the above solutions admit the formal series in A) as an asymptotic expansion, namely for
$0<\rho'<\rho$, $N \in \nat$, one has:
$$
||K^{[\le N]}_\eps -  K_\eps||_{\rho'}  \le C_N |\eps|^{N+1} \ ,\qquad
|\mu^{[\le N]}_\eps -  \mu_\eps|  \le C_N |\eps|^{N+1}\ .
$$

\end{theorem}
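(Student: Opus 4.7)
The plan is to follow the Lindstedt-plus-a-posteriori strategy: first construct formal power series solutions order by order, then feed sufficiently long truncations into Theorem~\ref{whiskered} as approximate solutions for $\eps$ ranging over the set $\G_{r_0}$. First I would construct the expansion $K_\eps = \sum_j \eps^j K_j$, $\mu_\eps = \sum_j \eps^j \mu_j$. Substituting into the invariance equation and extracting the coefficient of $\eps^n$ yields, for $n\ge 1$, a linear equation of the form
\[
Df_{\mu_0,0}\circ K_0\,K_n + D_\mu f_{\mu_0,0}\circ K_0\,\mu_n - K_n\circ T_\omega = R_n,
\]
where $R_n$ depends only on $K_0,\ldots,K_{n-1},\mu_0,\ldots,\mu_{n-1}$. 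This is exactly the linearized invariance equation \eqref{EW} at the unperturbed (symplectic) map, and the machinery of Sections~\ref{sec:centerspace}--\ref{sec:stable} applies verbatim: project onto the splitting provided by (A1.2), invoke the exact automatic reducibility from Section~\ref{sec:automatic} (exact since $K_0$ is truly invariant), and solve the resulting triangular pair of cohomology equations with $\lambda=1$, solvable because $\omega\in\D_d(\nu,\tau)$. The twist hypothesis (A3) via \eqref{non-degeneracyW} fixes $\mu_n$ and the averages. For the bundle expansions, with $K_j,\mu_j$ in hand, expanding \eqref{invariance2-s}--\eqref{invariance2-u} order by order gives linear equations whose operators are built from the hyperbolic diagonal blocks $\gamma_0^{\sigma,\sigma}$ and are therefore contractions by Definition~\ref{def:trichotomy}; the $A_j^\sigma\in\A_\rho$ are then determined without small divisors. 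This yields B.1 and B.2.

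For the validation step, given $N$, form the truncations $K_\eps^{[\le N]}$, $\mu_\eps^{[\le N]}$, $A_\eps^{\sigma,[\le N]}$. By construction the invariance residual is bounded by $C_N|\eps|^{N+1}$ in $\|\cdot\|_{\rho'}$, uniformly in $\eps$, and the approximate splittings are $C_N|\eps|^{N+1}$-approximately invariant. The non-degeneracy quantities listed in \eqref{degeneracylist} are continuous in $\eps$ and equal at $\eps=0$ to those of the unperturbed whiskered torus, so by shrinking $r_0$ they remain uniformly bounded on $|\eps|\le r_0$. The only quantity that blows up in the symplectic limit is the Diophantine constant $\nu(\lambda(\eps);\omega,\tau)$ controlling the center cohomology equation. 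The set $\G_{r_0}(A;\omega,\tau,N)$ is designed precisely to tame this: on $\G_{r_0}$ one has
\[
\nu(\lambda(\eps);\omega,\tau)\le A|\lambda(\eps)-1|^{-(N+1)}\sim A|\alpha|^{-(N+1)}|\eps|^{-a(N+1)},
\]
so the schematic smallness hypothesis of Theorem~\ref{whiskered} becomes a product of the form $A^\beta C_N|\eps|^{(N+1)-a\beta(N+1)}$, with $\beta$ determined by the Nash--Moser iteration. Choosing $N$ large enough (and $A,r_0$ correspondingly) makes this uniformly small on $\G_{r_0}$. Theorem~\ref{whiskered} then produces, for each $\eps\in\G_{r_0}$, an exact solution $(K_\eps,\mu_\eps)$ obeying $\|K_\eps-K_\eps^{[\le N]}\|_{\rho'}+|\mu_\eps-\mu_\eps^{[\le N]}|\le C'_N|\eps|^{N+1}$, which is the asymptotic expansion claim of B.3.

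Analyticity of $\eps\mapsto(K_\eps,\mu_\eps)$ on the interior of $\G_{r_0}$ follows because the Newton iteration in the proof of Theorem~\ref{whiskered} uses only algebraic operations, differentiation, and solutions of cohomology equations, all of which preserve analyticity in any complex parameter; combined with the local uniqueness (Theorem~\ref{thm:uniqueness}) under the fixed normalization, this produces a single-valued holomorphic family, and continuity up to $\partial\G_{r_0}$ comes from the uniform a-posteriori estimates. The main obstacle is the quantitative balancing in the validation step: one must track carefully how the amplification of $\nu(\lambda(\eps);\omega,\tau)$ as $\lambda(\eps)\to 1$ interacts with the standard KAM smallness condition and arrange $A,N,r_0$ so that Theorem~\ref{whiskered} applies uniformly on the fractal-like set $\G_{r_0}$. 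The weight $|\lambda(\eps)-1|^{N+1}$ in Definition~\ref{def:sets} is calibrated exactly so that the $N$th-order Lindstedt truncation error beats the Diophantine amplification; this delicate balance is the structural reason why the analyticity domain cannot contain any ball around $\eps=0$, consistent with the conjectured optimality discussed in Section~\ref{sec:dom}.
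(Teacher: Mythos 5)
Your proposal is correct and follows essentially the same Lindstedt-plus-a-posteriori route as the paper's proof: construct the formal series for $(K_j,\mu_j)$ by exploiting the exact automatic reducibility at $\eps=0$, obtain the bundle coefficients $A_j^\sigma$ by a contraction argument without small divisors, and then validate a fixed high-order truncation via Theorem~\ref{whiskered} on $\G_{r_0}$, where the weight $|\lambda(\eps)-1|^{N+1}$ in Definition~\ref{def:sets} is calibrated to compensate the blow-up of $\nu(\lambda(\eps);\omega,\tau)$. Your schematic exponent bookkeeping (the ``$\beta$'') and the analyticity-by-gluing argument via Theorem~\ref{thm:uniqueness} over annular pieces of $\G_{r_0}$ match the paper's sketch, which itself defers the fully quantitative balancing to the companion treatment in \cite{CCLdomain}.
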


The proof of Theorem~\ref{thm:domain} is very similar to the main theorem in \cite{CCLdomain}
and we sketch in the following Sections just the main ingredients of the proof.

\subsection{Proof of Theorem~\ref{thm:domain}}

By hypothesis we can find an embedding $K_0:\torus^d\rightarrow\M$
satisfying \eqref{inv0}. Since $f_{\mu_0,0}$ is symplectic, one has
$$
f_{\mu_0,0}^*\Omega=\Omega\ .
$$
Substituting $K_\eps^{[\leq N]}$, $\mu_\eps^{[\leq N]}$ in \equ{inv0},
and equating the coefficients with the same
power of $\eps$, we obtain recursive relations for the terms $K_j$, $\mu_j$.

Indeed, the first order in $\eps$ is
$$
(Df_{\mu_0,0}\circ K_0)K_1-K_1\circ T_\omega +(D_\mu f_{\mu_0,0}\circ K_0)\mu_1+D_\eps f_{\mu_0,0}\circ K_0=0\ .
$$
More generally, for any order $j$, we obtain that
\beq{ordj}
(Df_{\mu_0,0}\circ K_0)K_j-K_j \circ T_\omega +(D_\mu f_{\mu_0,0}\circ K_0)\mu_j = R_j\ ,
\eeq
where $R_j$   is a polynomial in $K_0,\, \ldots \, , K_{j-1}$,
$\mu_1,\, \ldots \, , \mu_{j-1}$,
$D_\eps f_{\mu_0, \eps =0 } \circ K_0, \, \ldots \, ,
D_\eps^j f_{\mu_0, \eps =0 } \circ K_0$.

It is important to note that the coefficients
multiplying the unknowns  $K_j$, $\mu_j$ in \eqref{ordj}  are
$(Df_{\mu_0,0}\circ K_0)$ and $(D_\mu f_{\mu_0,0}\circ K_0)$,
respectively. These coefficients do not depend on $j$
and they can be evaluated on the zero order approximation.

Using that the zero order corresponds to a whiskered
torus in the Hamiltonian case which is exactly invariant,
 we obtain that the
coefficient $(Df_{\mu_0,0}\circ K_0)$ is exactly reducible
(see Lemma~\ref{approximate-reducibility} and the discussion
around it).

That is, defining the matrix $M_0$ as in \eqref{M}, we have
in the center direction,
\beq{centerdir}
Df_{\mu_0,0}\circ K_0 M_0(\th) = M_0(\th + \omega)
\begin{pmatrix} \Id_d & S_0(\th)\\ 0&\lambda\Id_d \end{pmatrix}\ .
\eeq
Note that, in our case, using that $K_0$ is exactly invariant,
there is no $\E_R$ term (see \eqref{approxredestimates}).

The equation \eqref{ordj} for $K_j$, $\mu_j$ can be solved
taking advantage of the exact reducibility. It is the same
procedure that we had in Section~\ref{sec:centerspace},
but now  the reducibility equation holds exactly.

 Let us write $K_j(\th)=M_0(\th) W_j(\th)$. Using \equ{ordj},
we have:
$$
(Df_{\mu_0,0}\circ K_0)\, M_0W_j-M_0\circ T_\omega\ W_j\circ T_\omega+
(D_\mu f_{\mu_0,0}\circ K_0)\mu_j=R_j\ ;
$$
using \equ{centerdir} it follows that
$$
M_0\circ T_\omega\ \begin{pmatrix} \Id_d & S_0(\th)\\ 0&\lambda\Id_d \end{pmatrix} W_j
-M_0\circ T_\omega\ W_j\circ T_\omega+
(D_\mu f_{\mu_0,0}\circ K_0)\mu_j=R_j\ ,
$$
namely
$$
\begin{pmatrix} \Id_d & S_0(\th)\\ 0&\lambda\Id_d \end{pmatrix} W_j-W_j\circ T_\omega
+(M_0\circ T_\omega)^{-1}(D_\mu f_{\mu_0,0}\circ K_0)\mu_j=(M_0\circ T_\omega)^{-1}R_j\ .
$$
In components, namely taking the first $d$ rows and the last $d$ rows, say $W=(W_{j1}|W_{j2})$,
denoting by $[\cdot]_k$ the $k$-th component, we have:
\beqano
\lambda W_{j2}-W_{j2}\circ T_\omega+[(M_0\circ T_\omega)^{-1}(D_\mu f_{\mu_0,0}\circ K_0)]_2\mu_j
&=&[(M_0\circ T_\omega)^{-1}R_j]_2\nonumber\\
W_{j1}-W_{j1}\circ T_\omega+S_0W_{j2}+[(M_0\circ T_\omega)^{-1}(D_\mu f_{\mu_0,0}\circ K_0)]_1\mu_j
&=&[(M_0\circ T_\omega)^{-1}R_j]_1\ .
\eeqano
Define $\widetilde E_j=(M_0\circ T_\omega)^{-1} R_j$, $\widetilde A_0=(M_0\circ T_\omega)^{-1}(D_\mu f_{\mu_0,0}\circ K_0)$,
we obtain:
\beqa{WW}
\lambda W_{j2}-W_{j2}\circ T_\omega+\widetilde A_{20}\mu_j&=&\widetilde E_{2j}\nonumber\\
W_{j1}-W_{j1}\circ T_\omega+\widetilde A_{10}\mu_j&=&\widetilde E_{1j}-S_0 W_{j2}\ .
\eeqa
Taking the average of the first and second equation in \equ{WW}, one has
\beqa{aveWW}
(\lambda-1)\overline{W_{j2}}&=&\overline{\widetilde E_{2j}}-\overline{\widetilde A_{20}}\ \mu_j\nonumber\\
\overline{\widetilde A_{10}}\ \mu_j&=&\overline{\widetilde E_{1j}}-\overline{S_0 W_{j2}}\ .
\eeqa
Defining $W_{j2}=\overline{W_{j2}}+(W_{j2})^o$, one has $\overline{S_0 W_{j2}}=\overline{S_0}\overline{W_{j2}}
+\overline{S_0 (W_{j2})^o}$. Since $W_{j2}$ is an affine function of $\mu_j$,
let $(W_{j2})^o=(B_{a0})^o+(B_{b0})^o\mu_j$, where $(B_{a0})^o$, $(B_{b0})^o$ are solutions of
\beqano
\lambda (B_{a0})^o-(B_{a0})^o\circ T_\omega&=&(\widetilde E_{2j})^o\nonumber\\
\lambda (B_{b0})^o-(B_{b0})^o\circ T_\omega&=&-(\widetilde A_{20})^o\ .
\eeqano
Using the second of \equ{aveWW} we have
$$
\overline{\widetilde A_{10}}\ \mu_j+\overline{S_0}\ \overline{W_{j2}}+
\overline{S_0(B_{b0})^o}\mu_j=-\overline{S_0(B_{a0})^o}+\overline{\widetilde E_{1j}}\ ,
$$
so that we have:
$$
\begin{pmatrix} \overline{S_0} & \overline{\widetilde A_{10}}+\overline{S_0(B_{b0})^o}\\
(\lambda-1)\Id_d&\overline{\widetilde A_{20}} \end{pmatrix}
\begin{pmatrix} \overline{W_{j2}} \\
\mu_j \end{pmatrix}=
\begin{pmatrix} -\overline{S_0(B_{a0})^o}+\overline{\widetilde E_{1j}} \\
\overline{\widetilde E_{2j}} \end{pmatrix}\ ,
$$
which can be solved for $\overline{W_{j2}}$, $\mu_j$ under the non-degeneracy condition
\equ{non-degeneracyW}. This concludes the  proof of B.1).

\bigskip
To establish B.2), we just observe that, once we have
the expansions in powers  of $\eps$
for the $K_j$, $\mu_j$, we can obtain the power series expansion in
$\eps$ for
$Df_{\mu_0,0}\circ K_j$, hence for the $\gamma$'s, which are
obtained from $Df_{\mu_0,0}\circ K_0$ by taking projections on
fixed spaces.
We also note that, if we take projections over the
original splittings, we have that $A^{\sigma, (0)} = 0$.
Also, the approximate invariance of the initial splitting
tells that $\gamma^{\st, \hat \st, (0)}, \gamma^{\hat \st, \st, (0)},
\gamma^{\un, \hat \un, (0)}, \gamma^{\hat \un, \un,(0)}$ are small.

If we substitute the expansions for $A^\sigma$ and
equate terms of order $\eps^j$ in
\eqref{invariance2-s}, \eqref{invariance2-u},
we obtain that  these equations are satisfied in the sense of power
series if and only if for all $j$ we have
\begin{equation} \label{invariance2-orderj}
\begin{split}
& (\gamma_{\th,0}^{\hat \st, \hat \st} )^{-1} \left[ A^{\st}_{\th + \omega,j}
\gamma^{\st, \st}_{\th,0}  - \gamma_{\th,0}^{\hat \st, \st} \right] =
A^{\st}_{\th,j} + R_{\st,\theta, j}^{I} \\
& [\gamma_{\th-\omega,0}^{\st, \hat \st}+\gamma_{\th-\omega,0}^{\st,\st}A^{\hat\st}_{\th - \omega,j}]
(\gamma_{\th-\omega,0}^{\hat \st, \hat \st} )^{-1}=
A^{\hat\st}_{\th,j} + R_{\st,\theta, j}^{II} \\
& [\gamma_{\th-\omega,0}^{\hat\un,\un}+\gamma_{\th-\omega,0}^{\hat\un,\hat\un}A^{\un}_{\th - \omega,j}]
(\gamma_{\th-\omega,0}^{\un, \un} )^{-1}=
A^{\un}_{\th,j} + R_{\un,\theta, j}^{I} \\
& (\gamma_{\th,0}^{\un, \un} )^{-1} \left[ A^{\hat\un}_{\th + \omega,j}
\gamma^{\hat\un,\hat\un}_{\th,0}  - \gamma_{\th,0}^{\un, \hat\un} \right] =
A^{\hat\un}_{\th,j} + R_{\un,\theta, j}^{II} \ ,
\end{split}
\end{equation}
where the $R_{\sigma,\th, j}^{I}$, $R_{\sigma,\th, j}^{II}$ are explicit polynomial expressions
involving only $A^{\sigma}_{\th,l}$ for $l \le j -1$. Notice that only the $\gamma$ coefficients enter.

\begin{remark}
We note that all the equations in \eqref{invariance2-orderj}
have the form of fixed points of an operator, some of whose power
is a contraction.   Note that finding the $A^{\sigma, (j)}$ does
not entail loosing any domain of analyticity.
Because the structure of the $R_{\sigma,\th,j}^{(k)}$, we can
solve the equations recursively and proceed to find solutions
which have the same domain as the $R_{\sigma,\th,j}^{(k)}$, $\gamma$. The domain of
the $\gamma$ can be taken to be as close to the domain of $K_0$
as desired.
\end{remark}

We fix $N$ sufficiently large (say 5). Then, by
choosing $\eps$ sufficiently small, all assumptions of
Theorem~\ref{thm:domain} are satisfied.
We note that if we take the approximate solution as
$(K,\mu)=(K_\eps^{[\leq  N]},\mu_\eps^{[\leq N]})$
and as the approximate splitting the results of
the expansion, we have
that
$\E_h, \E \le C_{N,\rho} |\eps|^{N+1}$
for an analytic norm in a fixed radius $\rho$ slightly smaller
than the analytic domain of the original radius.

Since we assume that the torus in the Hamiltonian case is
non degenerate, we get that the non-degeneracy conditions are
uniform for $|\eps|$ small. If we choose a $\delta < \rho/2$, we
can obtain  the smallness conditions of Theorem~\ref{whiskered}
for small enough $|\eps|$.
Also the assumption of  the range of $K$ being inside the domain of
$f_{\mu, \eps}$ are uniform for $|\eps|$ small enough.

We also note that the non-degeneracy conditions for Theorem~\ref{whiskered}
are
uniform in the sets
\begin{equation}\label{sets}
\{ a_- < |\eps| < a_+\} \cap \G
\end{equation}
for sufficiently small $0< a_- < a_+$. As we argued before,
the condition that $\eps$ is small enough ensures that the non-degeneracy conditions
are uniform.  The intersection with $\G$ ensures that the Diophantine
properties are uniform.

Therefore, the iterative procedure in the proof of
Theorem~\ref{whiskered} is uniform for all the $\eps$ in the
sets \eqref{sets}.  We also recall that the iterative step
to prove Theorem~\ref{whiskered} consists just in performing
algebraic operations, shifting functions and solving cohomology
equations. In all these operations, it is clear that if
the data depend analytically on $\eps$, so does the correction.

Putting together the two remarks above, we obtain that the procedure
leads to a sequence of functions all of which are analytic in $\eps$
and which converge uniformly in sets of the form \eqref{sets}.
Therefore the solution will be analytic in sets of the form
\eqref{sets}. Due to the local uniqueness of the solution,
we obtain that the suitably normalized  solutions in different patches
that overlap have to agree.

\begin{remark}
Note that the conditions of smallness in $\eps$, so
that we can apply Theorem~\ref{whiskered} to the truncated series,
depend on the size of the coefficients and the domain
loss. It would be interesting to try to optimize the choices of
the orders of truncation and the domain losses depending on
$\eps$ -- similar calculations are often  done in the study of
Birkhoff Normal forms.
\end{remark}

We conclude by writing the Lindstedt series in B.2) associated to \eqref{invariance2-s}, \eqref{invariance2-u}.
We start from the first of \eqref{invariance2-s}. We expand $A^s$ as
$$
A_\th^s=\sum_{j=0}^\infty \eps^j A_{\th,j}^s\ ,
$$
and we expand $\gamma_\th^{\sigma\eta}$ as
\beqano
\gamma_\th^{\sigma\eta}=\sum_{j=0}^\infty \eps^j \gamma_{\th,j}^{\sigma\eta}\qquad if\ \sigma=\eta\ ,\nonumber\\
\gamma_\th^{\sigma\eta}=\sum_{j=1}^\infty \eps^j \gamma_{\th,j}^{\sigma\eta}\qquad if\ \sigma\not=\eta\ .
\eeqano
Inserting the above  series expansions in the first of \eqref{invariance2-orderj} and equating same orders of
$\eps^j$, $j\geq 1$, one obtains:
\beq{A1bis}
A_{\th,j}^s=(\gamma_{\th,0}^{\hat s,\hat s})^{-1}A_{\th+\omega,j}^s \gamma_{\th,0}^{s,s}
-(\gamma_{\th,0}^{\hat s,\hat s})^{-1} \gamma_{\th,j}^{\hat s,s}-R_{s,\th,j}^I(A_{\th,1}^s,\ldots,A_{\th,j-1}^s)\ .
\eeq
Equation \equ{A1bis} can be solved by iteration to obtain:
\beqano
A_{\th,j}^s&=&(\gamma_{\th,0}^{\hat s,\hat s})^{-1}\ \{(\gamma_{\th+\omega,0}^{\hat s,\hat s})^{-1}
A_{\th+2\omega,j}^s \gamma_{\th+\omega,0}^{s,s}-(\gamma_{\th+\omega,0}^{\hat s,\hat s})^{-1}
\gamma_{\th+\omega,j}^{\hat s,s}-R_{s,\th+\omega,j}^I\}\ \gamma_{\th,0}^{s,s}\nonumber\\
&-&(\gamma_{\th,0}^{\hat s,\hat s})^{-1}\ \gamma_{\th,j}^{\hat s,s}+R_{s,\th,j}^I\nonumber\\
&=&\sum_{k=0}^\infty 1\times \ldots\times (\gamma_{\theta+(k-1)\omega,0}^{\hat s,\hat s})^{-1}\
R_{s,\th+k\omega,j}^I\ (\gamma_{\theta+(k-1)\omega,0}^{s,s})\times \ldots\times 1\nonumber\\
&-&\sum_{k=0}^\infty (\gamma_{\th,0}^{\hat s,\hat s})^{-1}\times \ldots\times
(\gamma_{\theta+(k-1)\omega,0}^{\hat s,\hat s})^{-1}
(\gamma_{\theta+k\omega,0}^{\hat s,\hat s})^{-1}\ \gamma_{\theta+k\omega,j}^{\hat s,s}\
(\gamma_{\theta+(k-1)\omega,0}^{s,s})\times \ldots\times 1\ ,
\eeqano
where $1\times \ldots\times (\gamma_{\theta+(k-1)\omega,0}^{\hat s,\hat s})^{-1}=1$ and
$(\gamma_{\theta+(k-1)\omega,0}^{s,s})\times \ldots\times 1=1$ for $k=0$.
We remark that the products of $1\times \ldots\times (\gamma_{\theta+(k-1)\omega,0}^{\hat s,\hat s})^{-1}$
by $(\gamma_{\theta+(k-1)\omega,0}^{s,s})\times \ldots\times 1$ are contractions.
The other equations in \eqref{invariance2-s}, \eqref{invariance2-u} are treated in the same way;
we omit the details and provide just the final results.
Analogously, the second equation in \eqref{invariance2-s} gives the following solution:
\beqano
A_{\th,j}^{\hat s}
&=&-\sum_{k=0}^\infty 1\times \ldots\times \gamma_{\theta-k\omega,0}^{s,s}\
R_{s,\th-k\omega,j}^{II}\ (\gamma_{\theta-k\omega,0}^{\hat s,\hat s})^{-1}\times \ldots\times
(\gamma_{\theta-\omega,0}^{\hat s,\hat s})^{-1}\nonumber\\
&+&\sum_{k=0}^\infty 1\times \ldots\times \gamma_{\theta-k\omega,0}^{s,s}\ \gamma_{\theta-(k+1)\omega,j}^{s,\hat s}
\ (\gamma_{\theta-(k+1)\omega,0}^{\hat s,\hat s})^{-1}\times \ldots\times
(\gamma_{\theta-\omega,0}^{\hat s,\hat s})^{-1}\ ,
\eeqano
where $1\times \ldots\times \gamma_{\theta-k\omega,0}^{s,s}=1$ and
$(\gamma_{\theta-k\omega,0}^{\hat s,\hat s})^{-1}\times \ldots\times
(\gamma_{\theta-\omega,0}^{\hat s,\hat s})^{-1}=1$ for $k=0$.
As for the first equation in \eqref{invariance2-u}, we obtain:
\beqano
A_{\th,j}^{u}
&=&-\sum_{k=0}^\infty 1\times \ldots\times \gamma_{\theta-k\omega,0}^{\hat u,\hat u}\
R_{u,\th-k\omega,j}^{I}\ (\gamma_{\theta-k\omega,0}^{u,u})^{-1}\times \ldots\times (\gamma_{\theta-\omega,0}^{u,u})^{-1}\nonumber\\
&+&\sum_{k=0}^\infty 1\times \ldots\times \gamma_{\theta-k\omega,0}^{\hat u,\hat u}
\ \gamma_{\theta-(k+1)\omega,j}^{\hat u,u}\ (\gamma_{\theta-(k+1)\omega,j}^{u,u})^{-1}\times \ldots\times
(\gamma_{\theta-\omega,j}^{u,u})^{-1}\ ,
\eeqano
where $1\times \ldots\times \gamma_{\theta-k\omega,0}^{\hat u,\hat u}=1$ and
$(\gamma_{\theta-k\omega,0}^{u,u})^{-1}\times \ldots\times
(\gamma_{\theta-\omega,0}^{u,u})^{-1}=1$ for $k=0$.
The second equation in \eqref{invariance2-u} is solved as follows:
\beqano
A_{\th,j}^{\hat u}
&=&-\sum_{k=0}^\infty (\gamma_{\theta,0}^{u,u})^{-1}\times \ldots\times (\gamma_{\theta+(k-1)\omega,0}^{u,u})^{-1}
R_{u,\th+k\omega,j}^{II}\ (\gamma_{\theta+(k-1)\omega,j}^{\hat u,\hat u})\times \ldots\times 1\nonumber\\
&-&\sum_{k=0}^\infty (\gamma_{\theta,0}^{u,u})^{-1}\times \ldots\times (\gamma_{\theta+k\omega,0}^{u,u})^{-1}
\gamma_{\theta+k\omega,j}^{u,\hat u}\ \gamma_{\theta+(k-1)\omega,0}^{\hat u,\hat u}\times \ldots\times 1\ ,
\eeqano
where $(\gamma_{\theta,0}^{u,u})^{-1}\times \ldots\times
(\gamma_{\theta+(k-1)\omega,0}^{u,u})^{-1}=1$  and
$\gamma_{\theta+(k-1)\omega,0}^{\hat u,\hat u}\times \ldots\times 1=1$ for $k=0$.

\appendix

\section{Proof of the closing lemma and its consequences}\label{app:closing}

In this Appendix we provide the proof of Lemma~\ref{lem:closing} and
of some of its consequences, precisely Lemma~\ref{general} and
Lemma~\ref{precise}.

\subsection{Proof of Lemma~\ref{lem:closing}}

Assume  that we have a  splitting in the neighborhood of the
reference splitting, so that we can describe
the splitting by the functions $A_\th^\sigma$ as in \eqref{graph}.
Let $\gamma$ be a cocycle over a rotation.

Our first task is to formulate a functional equation for
the $A_\th^\sigma$ that is equivalent to their graphs being invariant.
Then, we will transform this equation into a contraction mapping theorem.
These constructions are very standard in the theory of hyperbolic systems
(\cite{Anosov69, HirschPS77}).

We see that for a vector
in  the graph of $A^\sigma_\th$ (which we write as
$x + A^\sigma_\th x$ with $x \in E^\sigma_\th$),
we have that its image under $\gamma_\th=\gamma(\theta)$,
 expressed in components, is:
\begin{equation}\label{newpoint}
\gamma_\th( x + A^\sigma_\th x) =
\left( \gamma^{\sigma, \sigma}_\th x +
\gamma^{\sigma, \hat \sigma}_\th  A^\sigma_\th x  \right)
+ \left( \gamma^{\hat \sigma, \sigma}_\th x
+ \gamma^{\hat \sigma, \hat \sigma}_\th  A^\sigma_\th x \right)\ .
\end{equation}

The  point \eqref{newpoint} is in the graph
of $A_{\th + \omega}^\sigma$ for all $x \in E^\sigma$,
if and only if the  matrices $A_\th^\sigma$ satisfy:
\begin{equation} \label{invariance1}
A^\sigma_{\th + \omega} \big(
\gamma^{\sigma, \sigma}_\th  +
\gamma^{\sigma, \hat \sigma}_\th  A^\sigma_\th \big)
= \gamma^{\hat \sigma, \sigma}_\th
+ \gamma^{\hat \sigma, \hat \sigma}_\th  A^\sigma_\th\ .
\end{equation}
Conversely, since the derivation of
\eqref{invariance1} is just algebra, we
see that if \eqref{invariance1} holds, all the points in
the graph of $A_\th^\sigma$ will be transformed into maps
in the graph of $A_{\th + \omega}^\sigma$.

Hence, our treatment will be based on discussing \eqref{invariance1},
manipulating it algebraically till it becomes a contraction.
Note that \eqref{invariance1} is a very general calculation and that it
applies to any dichotomy.

To guess the algebraic transformations
that make \eqref{invariance1} into a contraction in our cases,
it is useful to remark that $\gamma^{\sigma, \hat \sigma}$,
$\gamma^{\hat \sigma, \sigma}$  will be assumed to be
sufficiently small and that the cocycles generated
by $\gamma^{\sigma, \sigma}$ and $\left(\gamma^{\hat \sigma, \hat \sigma}\right)^{-1}$
have different contraction/growth rates, see \eqref{growthrates}.

Hence, \eqref{invariance1} is
heuristically a small perturbation of
\begin{equation} \label{invariance1-simple}
A^\sigma_{\th + \omega}
\gamma^{\sigma, \sigma}_\th
= \gamma^{\hat \sigma, \hat \sigma}_\th  A^\sigma_\th\ .
\end{equation}

The rearrangements of the equation \eqref{invariance1}  that are useful to
reformulate it as a contraction  are different depending on the cases
we consider.
Note that we need to study two dichotomies:
$\sigma = s, \hat \sigma = \hat s$ and  $\sigma = u, \hat \sigma = \hat u$.
Hence, we will need two equations for each of the two dichotomies.

The manipulations needed can be understood by looking at
\eqref{invariance1-simple}. We want to isolate one of the $A^\sigma$
appearing in \eqref{invariance1-simple} in such a way that the
RHS is a contraction. Once we get that the main part is a contraction,
it will follow that the (arbitrarily)  small terms omitted from \eqref{invariance1-simple}
do not affect the contraction properties.

For the dichotomy between $s, \hat s$ spaces we use:
\begin{equation} \label{invariance2-sbis}
\begin{split}
& (\gamma_\th^{\hat \st, \hat \st } )^{-1} \left[ A^\st_{\th + \omega}  \big(
\gamma^{\st, \st}_\th  +
\gamma^{\st, \hat \st}_\th  A^\st_\th \big)-
\gamma^{\hat \st, \st}_\th \right]  = A^\st_\th\ ,\\
&\left[ \gamma^{\st, \hat \st}_{\th - \omega} +
 \gamma^{\st, \st}_{\th-\omega}A^{\hat \st}_{\th -
\omega}
- A^{\hat \st}_\th \gamma_{\th -\omega}^{\hat s, s} A^{\hat \st}_{\th -\omega}\right]
 \left( \gamma^{\hat \st, \hat \st}_{\th -\omega}\right)^{-1}
= A^{\hat \st}_\th\ .
\end{split}
\end{equation}

\medskip

For the dichotomy corresponding to $\un, \hat \un$, we obtain the pair of equations:

\begin{equation} \label{invariance2-ubis}
\begin{split}
& \left[-A^\un_{\th} \gamma^{\un, \hat \un}_{\th -\omega}A^\un_{\th -\omega}
+ \gamma^{\hat u, u}_{\th - \omega}
+ \gamma^{\hat u, \hat u}_{\th - \omega} A^\un_{\th - \omega} \right]
(\gamma^{\un,\un}_{\th - \omega})^{-1} =
 A^\un_{\th}  \\
& (\gamma_\th^{\un, \un } )^{-1} \left[ A^{\hat \un}_{\th + \omega}  \big(
\gamma^{\hat \un, \hat \un}_\th  +
\gamma^{\hat \un, \un}_\th  A^{\hat \un}_\th \big)-
\gamma^{\un, \hat \un}_\th \right]  = A^{\hat \un}_\th\ .
\end{split}
\end{equation}
The two systems \eqref{invariance2-sbis} and \eqref{invariance2-ubis} can be dealt with by the same methods.
So, we will only discuss \eqref{invariance2-ubis}.
It will be important to note that the estimates that we obtain for
the solutions depend only on the constants $C_0$ and the rates
entering into \eqref{growthrates}.

We realize that, if we eliminate from \eqref{invariance2-ubis} the blocks of $\gamma$  that
can be made small by assuming that the splitting is
almost invariant, then we are led to consider the fixed point of the operator $\N_0$ defined as
\beqano
\N_0( A^\un, A^{\hat \un})_\th =
\begin{pmatrix}
&  \gamma^{\hat \un, \hat \un }_{\th-\omega} A^\un_{\th - \omega}
(\gamma_{\th-\omega}^{\un, \un})^{-1} \\
& (\gamma_{\th}^{\un, \un})^{-1}  A^{\hat \un}_{\th+\omega}
\gamma^{ \hat \un, \hat  \un}_{\th}
\end{pmatrix}\ .
\eeqano

In the following we present some (rather arbitrary choices)
that work.

Note that $\N_0$ is a linear operator
and that powers of it are obtained by multiplying the arguments
by cocycles in the right and in the left (and by shifting the
arguments).

Due to the rate conditions, there exists an $L > 0$ such
that, by iterating $\N_0$, $L$ times, we can make the
Lipschitz constant of the iterate (in the analytic norm)
smaller than $1/2$,  $\Lip(\N_0^L) < 1/2$ -- where the
Lipschitz constant is in the space $\A_\rho$.

If we consider the ball $\| A_\th^\st\|_\rho, \| A_\th^\un \|_\rho < M_1$ for some $M_1>0$,
we can find smallness conditions on
$\|\gamma^{\un, \hat \un}\|_\rho, \| \gamma^{\hat \un, \un}\|_\rho$, so that the
contraction of $\N_0$ in this ball is smaller than $3/4$.

We recall that the splitting $E$ is $\eta$-approximately invariant and that the
distance between the splittings can be measured by \equ{distance2}.
Using that $(\N_0)^L$ is a contraction, it follows that $\N_0$ has a unique fixed point. We conclude with the
standard fixed point estimates which, together with \equ{approximatelyinvariant}, lead to
$$
\max_\sigma \|A_\th^\sigma\|\leq C\eta\ .
$$

\subsection{Proof of Lemma~\ref{general}}

  There exists $N\in \mathbb N$ such that $\|\Gamma^N\|_\rho \leq {1 \over 4}$ (indeed just
  take $N =\ln({1\over 4C_0})/\ln(\xi)$). Now, there exists an $\eps^* > 0$ so that
  $\|\gamma - \tilde \gamma\|_\rho \leq \eps^*$ implies that
  \[\|\widetilde \Gamma^N\|_\rho \leq {1 \over 2}\ .\]
We recall that $\widetilde \Gamma_0^\ell=\widetilde \Gamma^\ell$, we write
$\widetilde \Gamma_{Nk}^{\ell+Nk}=\widetilde \Gamma_0^{\ell+Nk-Nk}\circ T_{Nk\omega}$, so that
$\widetilde\Gamma_0^{\ell+Nk}=\widetilde\Gamma_{Nk}^{\ell+Nk}\widetilde\Gamma_0^{Nk}=
\widetilde\Gamma_0^{\ell}\circ T_{Nk\omega}\widetilde\Gamma_0^{Nk}$.
Then, we have:
  \[\|\widetilde \Gamma^{Nk+\ell}\|_\rho \leq \left({1 \over 2}\right)^k \sup_{0<\ell \leq N}
   \|\widetilde \Gamma^\ell\|_\rho = \left ( {1 \over 2^{1/N}} \right )^{Nk+ \ell}
     {1 \over 2^{-\ell/N}} \sup_{0<\ell \leq N} \| \widetilde \Gamma^\ell \|_\rho\ .\]

     Hence we obtain the desired result with $\tilde \xi = (1/2)^{1/N}$, $\tilde C_0 = 2
     \sup_{0<\ell \leq N}\| \widetilde \Gamma^\ell \|_\rho$.

\subsection{Proof of Lemma~\ref{precise}}
The method of proof is very similar to perturbation arguments of semigroups (\cite{HilleP}).

The first observation is that, using the operators $A^\sigma_\th$
as in \eqref{graph}, we
can identify the approximately invariant
spaces $E^\sigma_\th$ with the invariant ones $\widetilde E^\sigma_\th$.
Let the invariant cocycle be
\[
\tgamma^{\sigma,\sigma}_\th: \widetilde E^\sigma_\th
\rightarrow \widetilde E^\sigma_{\th + \omega}\ .
\]
%we observe that the cocycle
%\[
%\ttgamma^{\sigma, \sigma}_\th =
%(\Id + A_{\th +\omega}^\sigma)^{-1} \tgamma^{\sigma, \sigma}_{\th} (\Id + A_\th^\sigma)
%\]
%has the same growth properties of $\tgamma^{\sigma, \sigma}$. On the
%other hand, it is a cocycle on $E^\sigma$ (one can think that $\ttgamma^{\sigma, \sigma}$ is
%a coordinate representation of $\tgamma^{\sigma, \sigma}$). The
%advantage is that, since it is defined in the same spaces,
%it can be compared directly with $\gamma_\th^{\sigma,\sigma}$.

%Of course, identical formulas happen for the inverses of the cocycles.

We will only consider the case of the forward cocycles.
The case of the inverse requires only to change $\omega$ to
$-\omega$ and to consider the inverse cocycle.

Adding and subtracting appropriate terms, using the notation in \eqref{product}-\eqref{cocyclerotation}, we have
\begin{equation}\label{duhameldiscrete}
\begin{split}
\tGamma^n_\th & \equiv \tgamma^{\sigma,\sigma}_{\th + (k-1) \omega}  \cdots
\tgamma^{\sigma,\sigma}_\th \\
&  =  \gamma^{\sigma,\sigma}_{\th + (k-1) \omega}  \cdots
\gamma^{\sigma,\sigma}_\th +
\sum_{j = 0}^{k-1} \gamma^{\sigma,\sigma}_{\th + (k-1) \omega}  \cdots
\gamma^{\sigma,\sigma}_{\th + (j+1) \omega}
 \left[
\tgamma^{\sigma, \sigma}_{\th + j \omega} -
\gamma^{\sigma, \sigma}_{\th + j \omega} \right]
\tgamma^{\sigma, \sigma}_{\th + (j -1) \omega} \cdots \tgamma^{\sigma, \sigma}_{\th} \\
& =
\Gamma^k_\th + \sum_{j =0}^{k-1} \Gamma^k_{j+1}
 \left[
\tgamma^{\sigma, \sigma}_{\th + j \omega} -
\gamma^{\sigma, \sigma}_{\th + j \omega} \right] {\tGamma}^{j}_\th\ ,
\end{split}
\end{equation}
where we define $ {\tGamma}^{0}_\th$ and  $ {\Gamma}^{0}_\th$ to be the identity, and we intend that
$\tgamma^{\sigma,\sigma}_{\th + (j-1) \omega}  \cdots \tgamma^{\sigma,\sigma}_\th=0$ for $j=0$ and
$\gamma^{\sigma,\sigma}_{\th + (k-1) \omega}  \cdots \gamma^{\sigma,\sigma}_{\th+(j+1)\omega}=0$ for
$j=k-1$. Note
that \eqref{duhameldiscrete}  is a discrete version of
Duhamel formula, so that the rest of
the argument is very similar to the arguments
in perturbation theory of semigroups.

We will consider \eqref{duhameldiscrete} as a fixed point equation
for $\tGamma^k$ lying in an appropriate space of sequences
with an appropriate norm.
We therefore write \eqref{duhameldiscrete} as
\beq{fixedp}
\tGamma = \Gamma + \LL \tGamma.
\eeq
We will
show that the operator $\LL$ given by
\[
(\LL\tGamma)^k \equiv \sum_{j =0}^{k-1} \Gamma^k_{j+1}
 \left[
\tgamma^{\sigma, \sigma}_{\th + j \omega} -
\gamma^{\sigma, \sigma}_{\th + j \omega} \right] {\tGamma}^{j}_\th
\]
is a contraction in a space of
sequences endowed with a norm that captures the rate.

\begin{comment}
We can think of \eqref{duhameldiscrete} as an equation for the
sequence of operators
$\{\tGamma^n\}_{n \in \nat}$ given
the sequences of operators $\{ \Gamma\}_{n \in \nat}$ and
$
 \left\{
\ttgamma^{\sigma, \sigma}_{\th + n \omega} -
\gamma^{\sigma, \sigma}_{\th + n \omega} \right\}_{n \in \nat}
$.

Notice that the $R.H.S$ is an affine operator in the unknown
$\tGamma$.  We write \eqref{duhameldiscrete} as
\begin{equation} \label{concise}
\tGamma = \A(\tGamma) \equiv \Gamma + \LL \tGamma
\end{equation}

\end{comment}

Iterating \eqref{fixedp} we obtain
\beq{Dyson}
\tGamma = \Gamma + \LL \Gamma + \LL^2 \Gamma + ... + \LL^k \Gamma + ...
\eeq
The above treatment is very similar to perturbation theory of
semigroups in the Physics literature; equation \eqref{Dyson} is known as the Dyson expansion.

To study \eqref{fixedp} we introduce appropriate norms in spaces of sequences of operators, precisely:
$$
\|\tGamma\|_{\txi} = \sup_{k \in \nat} \left({\txi}^{-k} \| \tGamma^k\|_\rho\right)\ ;
$$
we suppress the $\A_\rho$ from the notation for $\|\Gamma\|$
since it will be fixed in this argument. On the other hand,
the $\xi$ will be important for us.

We fix $\xi < \txi$ and estimate $\LL$ in the norm.
We introduce the quantity $a$ as
\[
a  \equiv
\|
\tgamma^{\sigma, \sigma} -
\gamma^{\sigma, \sigma} \|_\rho\ .
\]
We note that for any choice of $\txi$ for which we can show that $\LL$ is a contraction, we show that
there is a solution of \eqref{fixedp} in the space of functions with rate $\txi$ (and of course, that
the solution is unique). Obviously the $\tGamma$ produced by the recursion \eqref{Dyson} is a solution.
Therefore, we show that the \eqref{Dyson} has growth with exponent $\txi$:
\beqano
  \|(\LL\tGamma)^k \|_\rho \txi ^{-k}
  & \leq & \txi^{-k} \sum_{j = 0}^{k-1} \|\Gamma_{j+1}^k \|_\rho a \|\tGamma^j\|_\rho \\
  &  \leq & \txi^{-k} \sum_{j = 0}^{k-1} C_0 \xi^{k-(j+1)} a \txi^j \|\tGamma\|_\txi \\
  &  = & C_0 a \sum_{j = 0}^{k-1} \xi^{k-j}\txi^{-k+j} \|\tGamma\|_\txi (\xi^{-1}) \\
  & \leq & C_0 a \txi^{-1} {1 \over 1-{\xi}/\txi} \|\tGamma\|_\txi\ .
\eeqano
Hence, if we take $\xi/\txi<{1\over 2}$ and
$$
C_0\ a\ \txi^{-1} \leq 1/4 ,
$$
we ensure that $\|\LL\|$ is a contraction.
%This means that for $a \leq (4 C_0)^{-1}$.

The estimate of the constant $\tilde C_0$ follows from the fact that
$\|\LL \| = C_0 a \txi^{-1} {1 \over 1-{\xi}/\txi} < 1/2$.
%corresponding
%to the $\|\tGamma \|_\rho$. To estimate the norm
%of the fixed point, it suffices to estimate $\|\LL \Gamma\|_\txi$:

\section{Non-Euclidean manifolds}
\label{sec:precisions}
In this Section, we discuss how one can adapt the results
for non-Euclidean manifolds. For lower dimensional tori, this is
interesting because there are examples with lower dimensional tori
with a non-trivial topology of the neighborhoods (in Lagrangian
tori, this cannot happen).

In non-Euclidean manifolds, we run into two problems.

One is that for approximately invariant tori,
$\T_{f_\mu\circ K (\th) )} \ne \T_{ K(\th + \omega)}$
and, hence for tori in a non-Euclidean manifold we cannot write
$Df_\mu \circ K(\th) = DK(\th + \omega)$, which
is a very suggestive notation for effects of iterations.
This problem has appeared very frequently in dynamics.
A standard way of fixing the problem is to construct
connectors $S_x^y$ (\cite{HirschPPS70}) which identify the tangent
spaces of close enough points\footnote{
A very natural definition of connectors is
$y = \exp_x(v)$ with $v$ sufficiently small; we fix $x$ and
the derivative of the exponential mapping at $v$ identifies
the tangent  space  at $x$ with the tangent space at $y$.
The chain rule gives that $S_y^z S_x^y = S_x^z$, when
$x$ is sufficiently close to $y,z$ and $y, z$ are sufficiently
close.}.
Hence, the cocycles  one should consider  are
\[
 Df_\mu\circ K(\th + n \omega) S_{f\circ K(\th +(n-1)\omega)}^{K(\th + n \omega)}
 \times \cdots \times Df_\mu \circ K(\th + \omega)S_{f_\mu \circ K(\th)}^{K(\th + \omega)}\ .
\]
Note, however, that if the bundles are not trivial, these cannot be
identified with matrix cocycles.

A second problem  is that $f_\mu \circ K(\th) $ transforms
$\Omega_{K(\th)}$ into a multiple of $\Omega_{f_\mu(K(\th))}$.
Even if one identifies the tangent spaces, it is not clear
what are the geometric properties of the product \eqref{product}.

There are standard ways of correcting this. For example
\cite{GonzalezL08} uses  the Global Darboux theorem
from \cite{Moser65, Weinstein73} to change slightly the map in such
a way that approximate cocycles are exactly conformally symplectic.
As shown in \cite{GonzalezL08} these changes do not alter the quadratic
convergence of the algorithm because the size of the required
changes can be bounded by the error in the invariance equation.

In our case, this problem appears only in Section~\ref{sec:geometry}.

\def\cprime{$'$} \def\cprime{$'$} \def\cprime{$'$} \def\cprime{$'$}

%\bibliographystyle{alpha}
%\bibliography{cclwhiskers}

\end{document}